\newcommand{\lk}[2]{{\rm lk}_{#1}(#2)}
\newcommand{\st}[2]{{\rm st}_{#1}(#2)}
\newtheorem{Lemma}{Lemma}[section]
\newtheorem{Theorem}[Lemma]{Theorem}
\newtheorem{Corollary}[Lemma]{Corollary}
\newtheorem{Remark}[Lemma]{Remark}
\newtheorem{definition}[Lemma]{Definition}
\newtheorem{Question}[Lemma]{Question}
\newtheorem{Conjecture}[Lemma]{Conjecture}
\def\t{\tau}
\def\s{\sigma}
\def\p{\partial}
\def\e{{\varepsilon}}
\def\D{\Delta}
\def\z{\zeta}
\def\lk{\text{\textnormal{lk}}}
\def\st{\text{\textnormal{st}}}
\def\deg{\text{\textnormal{deg}}}
\def\wrt{{\rm with respect to }}
\begin{document}

\title{Embeddings of edge-colored dual graphs of balanced 3- and 4-manifolds}
\author{Biplab Basak$^1$ and Sourav Sarkar}
\date{December 2, 2025}
\maketitle
\vspace{-10mm}
\begin{center}
\noindent{\small Department of Mathematics, Indian Institute of Technology Delhi, New Delhi 110016, India.$^2$}
\end{center}
\footnotetext[1]{Corresponding author}
\footnotetext[2]{{\em E-mail addresses:} \url{biplab@iitd.ac.in} (B.
Basak), \url{sarkarsourav610@gmail.com} (S. Sarkar).}
\hrule

\begin{abstract} 
This article focuses on a class of properly edge-colored graphs, which arise from topological combinatorics, and investigates their embeddings onto surfaces. Specifically, these graphs are known as the dual graphs of balanced normal pseudomanifolds. We introduce the concept of the balanced genus, which represents the smallest genus of a surface onto which the dual graph of a normal pseudomanifold can embed regularly.

As a key result, we establish that for any 3-manifold $ M $ that is not a sphere, the balanced genus satisfies the lower bound $ \mathcal{G}_M \geq m+3 $, where $ m $ is the rank of its fundamental group of $M$. Furthermore, we prove that a 3-manifold $ M $ is homeomorphic to the 3-sphere if and only if its balanced genus $ \mathcal{G}_M $ is at most 3.  Similarly, for 4-manifolds, we establish that if $ M $ is not homeomorphic to a sphere, then its balanced genus is bounded below by $ \mathcal{G}_M \geq 2\chi(M) + 5m + 11 $. Moreover, a 4-manifold $ M $ is PL homeomorphic to the 4-sphere if and only if its balanced genus satisfies $ \mathcal{G}_M \leq 2\chi(M) + 10 $.  

We believe that the balanced genus offers a new perspective in graph theory and combinatorics and will inspire further developments in the field in connection with algebraic combinatorics. To this end, we outline several directions for future research.

\end{abstract}

\noindent {\small {\em MSC 2020\,:} Primary 05C15; Secondary 05E45, 05A20, 05C75.

\noindent {\em Keywords:} Edge-colored graphs,  Normal Pseudomanifold, Dual graphs, Balanced genus.}

\medskip
\section{Introduction} 
A key feature of balanced triangulations is that they are properly vertex-colorable, ensuring that their dual graphs are edge-colorable. This enables the study of balanced triangulations through their associated dual graphs. Moreover, since these graphs naturally embed on surfaces, we can relate the genus of the embedding surface to the balanced triangulation. This connection provides a flexible approach to studying balanced triangulations in terms of genus, leading to the natural introduction of a new PL invariant.

The {\em dual graph} of a normal $d$-pseudomanifold $\D$ is the graph whose vertices represent the facets of $\Delta$, with edges given by unordered pairs $\{\sigma_1, \sigma_2\}$, where $\sigma_1$ and $\sigma_2$ are two facets of $\D$ that share a common $(d-1)$-simplex $\sigma_1\cap\sigma_2$. We denote the dual graph of a normal $d$-pseudomanifold $\D$ by $\Lambda(\Delta)$. Observe that $\Lambda(\Delta)$ is a simple graph. Moreover, if  $\Delta$ is a balanced normal $d$-pseudomanifold, then the dual graph $\Lambda(\Delta)$ admits a proper edge coloring.

Let $\Delta$ be a balanced normal $d$-pseudomanifold with a proper vertex coloring $\kappa : V(\Delta)\to [d]$, where $[d]=\{0,1,\dots,d\}$. Define an edge coloring $\gamma$ of $\Lambda(\Delta)$ using the color set $[d]$ as follows: for an edge $e = \{\sigma_1, \sigma_2\}$, where the vertices of $\sigma_1\cap\sigma_2$ are colored $0,1,\dots, i-1, i+1, \dots, d$, set $\gamma(e) = i$. Thus, $\gamma$ is a proper edge coloring of $\Lambda(\Delta)$ with the color set  $[d]$. 

Note that, under the aforementioned edge coloring  $\gamma$,  $\Lambda(\Delta)$ forms a $(d+1)$-regular colored graph. An embedding $i : \Lambda(\Delta) \hookrightarrow F$ of $\Lambda(\Delta)$ into a closed surface $F$ is called a {\it regular embedding} if there exists a cyclic permutation $\varepsilon=(\varepsilon_0, \varepsilon_1, \dots, \varepsilon_d)$ of the color set $[d]=\{0, \dots, d\}$ such that the boundary of each face of $i(\Lambda(\Delta))$ forms a bi-colored cycle with colors $\varepsilon_j$ and $ \varepsilon_{j+1}$ for some $j$, where the addition is taken modulo $d+1$.  

A balanced normal pseudomanifold naturally gives rise to a dual graph, which is a regular colored graph. Such graphs admit regular embeddings into surfaces, making it natural to associate balanced normal pseudomanifolds with the genus of the corresponding embedded surface. Motivated by this observation, we introduce the concept of balanced genus as a new approach to studying balanced triangulations, which we expect to be a useful tool for understanding their structure.
As shown in \cite{CP, Gag, Gross, Stahl}, for any cyclic permutation  
$\varepsilon$ of $[d]$, $\Lambda(\Delta)$ admits a regular embedding  
$i_\varepsilon : \Lambda(\Delta) \hookrightarrow F_\varepsilon$,  
where $F_\varepsilon$ is orientable if $\Lambda(\Delta)$ is bipartite,  
and non-orientable otherwise. For a balanced normal $d$-pseudomanifold $\Delta$  with $d\geq 3$, the {\em balanced genus} $\mathcal G (\Delta)$ of $\Delta$ is defined as the smallest genus of the orientable surface (or half the genus, in the non-orientable case) into which $\Lambda(\Delta)$ embeds regularly. Similarly, the balanced genus $\mathcal {G}_M$ of a triangulable space $M$ is defined as the minimum balanced genus among all possible balanced triangulations of $M$. The primary focus of this article is to establish a lower bound for the balanced genus of $M$ in terms of its Euler characteristic and the rank of its fundamental group.

We establish lower bounds for the balanced genus of 3-manifolds and PL 4-manifolds and determine necessary and sufficient conditions for a 3-manifold or 4-manifold to be homeomorphic to the 3-sphere or 4-sphere, respectively. Furthermore, we compute the balanced genus for several well-known manifolds, describing its effectiveness in distinguishing different topological structures. Given a PL manifold $M$, denote by $m(M)$ the rank of the fundamental group of $M$. The main results of this article are the following:

\begin{Theorem}\label{main1}
Let $M$ be a $3$-manifold that is not homeomorphic to a sphere. Then  $\mathcal{G}_M\geq m(M)+3$. 
\end{Theorem}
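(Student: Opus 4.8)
The plan is to reduce the statement to an inequality about the face numbers of an arbitrary balanced triangulation $\Delta$ of $M$, to recognise that inequality as a statement about the Heegaard splittings of $M$ induced by the colour classes, and finally to extract the extra ``$+3$'' from the combinatorics of balanced triangulations together with the hypothesis $M\neq S^3$.

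\smallskip
\noindent\textit{Step 1 (from regular embeddings to face numbers).} Fix a balanced triangulation $\Delta$ of $M$ with colouring $\kappa\colon V(\Delta)\to[3]$, colour classes $V_i=\kappa^{-1}(i)$, and dual graph $\Gamma=\Lambda(\Delta)$. For a cyclic permutation $\varepsilon=(\varepsilon_0,\varepsilon_1,\varepsilon_2,\varepsilon_3)$ of $[3]$ the faces of $i_\varepsilon\colon\Gamma\hookrightarrow F_\varepsilon$ are the bicoloured cycles of $\Gamma$ in the four consecutive colour pairs $\{\varepsilon_j,\varepsilon_{j+1}\}$, $j\in\mathbb Z_4$. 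Since, for $i\neq j$, the $\{i,j\}$-coloured cycles of $\Gamma$ are in bijection with the edges of $\Delta$ coloured by $[3]\setminus\{i,j\}$, and since $\chi(M)=0$ gives $f_1(\Delta)=f_0(\Delta)+f_3(\Delta)$, a direct count of vertices, edges and faces of $i_\varepsilon(\Gamma)$ yields
\[
\chi(F_\varepsilon)=c_\varepsilon(\Delta)-f_3(\Delta),\qquad\text{hence}\qquad \tfrac12\bigl(2-\chi(F_\varepsilon)\bigr)=1+\tfrac12\bigl(f_3(\Delta)-c_\varepsilon(\Delta)\bigr),
\]
where $c_\varepsilon(\Delta)$ is the number of edges of $\Delta$ with one endpoint coloured from $\{\varepsilon_0,\varepsilon_2\}$ and the other from $\{\varepsilon_1,\varepsilon_3\}$. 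As the diagonal partitions $\{\varepsilon_0,\varepsilon_2\}\mid\{\varepsilon_1,\varepsilon_3\}$ range over all three $2{+}2$ partitions of $[3]$ while $\varepsilon$ varies, and as $\mathcal G(\Delta)$ is the minimum of $\tfrac12(2-\chi(F_\varepsilon))$ over $\varepsilon$ (in both the orientable and the non-orientable case), Theorem~\ref{main1} becomes: for every balanced triangulation $\Delta$ of $M\neq S^3$ and every $2{+}2$ partition $[3]=A\sqcup B$,
\[
e_A(\Delta)+e_B(\Delta)-f_0(\Delta)\ \geq\ 2m(M)+4 ,
\]
where $e_A(\Delta)$ counts the edges of $\Delta$ with both endpoints coloured from $A$ (so $e_A+e_B=f_1-c_\varepsilon$).

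\smallskip
\noindent\textit{Step 2 (the colouring Heegaard splitting).} For a partition $[3]=A\sqcup B$ the full subcomplexes $\Delta_A,\Delta_B$ are graphs; connectedness of $M$ forces them connected, their regular neighbourhoods $N(\Delta_A),N(\Delta_B)$ are handlebodies, and $M=N(\Delta_A)\cup_\Sigma N(\Delta_B)$ is a Heegaard splitting with $\Sigma\cong F_\varepsilon$. Thus $\tfrac12(2-\chi(F_\varepsilon))$ equals the genus $\mathfrak g_{A\mid B}=b_1(\Delta_A)=b_1(\Delta_B)$ of this splitting, the identity of Step 1 reads $e_A+e_B-f_0=2\mathfrak g_{A\mid B}-2$, and since $\pi_1(N(\Delta_A))$ is free of rank $\mathfrak g_{A\mid B}$ and surjects onto $\pi_1(M)$ we obtain $\mathfrak g_{A\mid B}\geq\mathfrak g(M)\geq m(M)$. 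This already gives the weaker bound $\mathcal G_M\geq m(M)$, and the inequality of Step 1 is exactly the improvement $\mathfrak g_{A\mid B}\geq m(M)+3$: no colouring-induced Heegaard splitting of a balanced triangulation of a non-sphere comes within two of being minimal.

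\smallskip
\noindent\textit{Step 3 (gaining the last three handles, and the main obstacle).} This is the technical heart. The plan is an extremal/rigidity argument: every vertex link $\lk_\Delta(v)$ is a balanced triangulated $2$-sphere in the three colours $\neq\kappa(v)$, hence has at least two vertices of each such colour, which yields $f_3(\Delta)\geq 2f_0(\Delta)$ with equality exactly when every link is an octahedron. One then shows that the ``link excess'' $f_3(\Delta)-2f_0(\Delta)$ is distributed among the three quantities $e_A+e_B-f_0$ so that, unless this excess vanishes, each of them exceeds the Heegaard lower bound $2\mathfrak g(M)-2\geq 2m(M)-2$ by at least $6$; while if the excess vanishes every link is octahedral and $\Delta$ is forced to be the boundary complex of the $4$-dimensional cross-polytope, i.e.\ $M=S^3$, contradicting the hypothesis. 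I expect the main obstacle to be making this accounting uniform over the three partitions simultaneously — a single non-octahedral link enlarges only some of the $e_A+e_B$ — so that one must combine the local link estimates with the global identities $\sum_{\text{partitions}}(e_A+e_B)=f_1(\Delta)$ and $\sum_v f_2(\lk_\Delta(v))=4f_3(\Delta)$, the connectedness of each $\Delta_A$, and the $\pi_1$-rank inequality of Step 2 (possibly together with a balanced lower-bound-theorem estimate relating $f_3$, $f_0$ and $m(M)$) to close the remaining gap. Minimising over $\varepsilon$ and then over all balanced triangulations of $M$ finally gives $\mathcal G_M\geq m(M)+3$.
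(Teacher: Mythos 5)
Your Steps 1 and 2 are correct and match the paper's setup: your identity $\rho_\varepsilon(\Delta)=1+\tfrac12\bigl(e_A+e_B-f_0\bigr)$ is Equation \eqref{3dim rho} in disguise, since $e_A+e_B-f_0=\Gamma_A+\Gamma_B$ in the paper's notation and $\Gamma_A=\Gamma_B$ by Corollary \ref{equality of h values}; and the Heegaard--splitting reading of the two-colored subgraphs correctly yields the weak bound $\mathcal{G}_M\geq m(M)$. The genuine gap is Step 3, which you yourself flag as a plan rather than a proof, and it is exactly where all of the paper's technical content lives. What must be shown is the per-partition inequality $\Gamma_S\geq m(M)+2$ (equivalently, that every coloring-induced Heegaard splitting has genus at least $m(M)+3$), and the link-excess mechanism you propose does not deliver it: the quantity $f_3-2f_0$ measures how far vertex links are from octahedra, but its positivity has no evident bearing on the rank of $\pi_1(M)$, so the claimed uniform gain of $6$ over $2m(M)-2$ whenever the excess is nonzero is unsubstantiated. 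For the $50$-facet balanced $\mathbb{S}^2\times\mathbb{S}^1$ the excess is $22$ while the gain is exactly $6$; nothing in a link-by-link accounting explains why a triangulation with large excess could not have gain $2$ or $4$. Moreover, the excess-zero endpoint of your dichotomy (all links octahedral implies $\Delta$ is the boundary of the cross-polytope) is itself an unproven rigidity claim that the paper never needs.

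The paper closes the gap by a group-theoretic rather than an enumerative argument. By Lemma \ref{edge generators}, $\pi_1(\Delta)$ is generated by the edges of the two-colored graph $\Delta_S$ modulo a spanning tree $T$, i.e.\ by $\Gamma_S+1$ generators --- this recovers your weak bound. Theorem \ref{d-dim balanced NPM with SS geq 2} then shows that when $\Gamma_S\geq 2$ one can choose $T$ and three edges of $\Delta_S\setminus T$ whose fundamental cycles lie inside the link of a single vertex, so that by Lemma \ref{unique cycle Ce is in link of a vertex} these three generators are trivial and the rank drops to $\Gamma_S-2$; the combinatorial input is the existence of a vertex of degree at least $3$ in $\Delta_{[3]\setminus S}$ (Lemma \ref{d-dim npm with degree of (d-3) simplex geq 3}). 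The residual cases $\Gamma_S\leq 2$ are handled separately (Lemmas \ref{d-dim manifold with SS=0} and \ref{d-dim manifold with SS=1}, Corollary \ref{d-dim balanced npm SS=2}), forcing $M\cong\mathbb{S}^3$ and hence $\Gamma_S\geq 3$ for non-spheres via Theorem \ref{3 dim and balanced genus 3}. To complete your proof you should replace Step 3 by an argument of this kind: the ``$+3$'' comes from exhibiting three redundant generators in a presentation of $\pi_1(M)$, not from distributing a face-number excess.
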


\begin{Theorem}\label{main2}
Let $M$ be a PL $4$-manifold that is not homeomorphic to a sphere. Then  $\mathcal{G}_M\geq 2 \chi(M)+ 5m(M) +11$. 
\end{Theorem}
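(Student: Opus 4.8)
The plan is to first turn the balanced genus into an \emph{exact} combinatorial quantity, then express that quantity through first Betti numbers of certain bicolored induced subcomplexes of $\Delta$, and finally estimate those Betti numbers using $\pi_1(M)$ together with the hypothesis that $M$ is not a sphere.

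\emph{Step 1: an exact formula for $\mathcal{G}(\Delta)$.} Fix a balanced triangulation $\Delta$ of $M$ and a cyclic permutation $\varepsilon=(\varepsilon_0,\dots,\varepsilon_4)$ of $[4]$. A $\{\varepsilon_j,\varepsilon_{j+1}\}$-bicolored cycle of $\Lambda(\Delta)$ corresponds bijectively to a $2$-face of $\Delta$ whose colour set is the $3$-element set $[4]\setminus\{\varepsilon_j,\varepsilon_{j+1}\}$; combined with Euler's formula for the (cellular) regular embedding $\Lambda(\Delta)\hookrightarrow F_\varepsilon$ this gives $\rho_\varepsilon(\Lambda(\Delta))=1+\tfrac34 f_4(\Delta)-\tfrac12\sum_{j=0}^4 g_{\varepsilon_j\varepsilon_{j+1}}$, where $g_{ab}$ denotes the number of $\{a,b\}$-bicolored cycles. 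Now use that $\Delta$ is a combinatorial manifold: its ridges lie in exactly two facets ($2f_3=5f_4$) and, since each $3$-residue of $\Lambda(\Delta)$ is a gem of the $2$-sphere $\lk_\Delta(\text{an edge})$, one has $g_{ij}+g_{jk}+g_{ik}=2\,\#\{\text{edges of }\Delta\text{ coloured }[4]\setminus\{i,j,k\}\}+\tfrac12 f_4$ for any three colours. Summing over colour triples yields $3f_2=2f_1+5f_4$, hence the Dehn--Sommerville relation $f_1=3f_0+\tfrac12 f_4-3\chi(M)$. Feeding these identities back, and using that the induced subcomplex $G_e\subseteq\Delta$ on the two colour classes of a pair $e$ is connected (Step 2), a short manipulation collapses the displayed formula to
$$\rho_\varepsilon(\Lambda(\Delta))=2\chi(M)-4+\sum_{e\in C_\varepsilon}b_1(G_e),$$
where $C_\varepsilon$ is the $5$-cycle on $[4]$ with edge set $\{\{\varepsilon_j,\varepsilon_{j+1}\}\}_j$. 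Thus $\mathcal{G}(\Delta)=2\chi(M)-4+\min_\varepsilon\sum_{e\in C_\varepsilon}b_1(G_e)$, and Theorem~\ref{main2} becomes the inequality $\sum_{e\in C_\varepsilon}b_1(G_e)\ge 5m(M)+15$ for every $C_\varepsilon$; it suffices to prove $b_1(G_e)\ge m(M)+3$ for every colour pair $e$.

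\emph{Step 2: connectivity and the bound $b_1(G_e)\ge m(M)$.} Connectivity of $G_e$ follows from a dual-graph walk: every facet $\sigma$ carries the edge $\sigma_e$ joining its two vertices whose colours lie in $e$, crossing a ridge moves $\sigma_e$ only by sliding one endpoint along a shared vertex, every edge and vertex of $G_e$ occurs among the $\sigma_e$, and $\Lambda(\Delta)$ is connected. For the Betti bound, rebuild $\Delta$ from $G_e$ by adjoining the three remaining colour classes one class at a time; adjoining a vertex $v$ of a new colour attaches the cone $v*\bigl(\lk_\Delta(v)\cap(\text{complex built so far})\bigr)$, whose base is an induced colored subcomplex of the PL sphere $\lk_\Delta(v)$ and hence connected by the same walk argument, so each such move only passes to a quotient of $\pi_1$; the final colour merely cones off copies of $\lk_\Delta(v)\cong S^3$ and changes nothing. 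Hence $\pi_1(M)=\pi_1(\Delta)$ is a quotient of $\pi_1(G_e)$, a free group of rank $b_1(G_e)$, so $b_1(G_e)\ge m(M)$. Independently, since $\lk_\Delta(v)$ is a balanced PL $3$-sphere it has at least two vertices of each of its four colours, whence every vertex of $G_e$ has degree $\ge 2$ and $b_1(G_e)\ge 1$; in particular $b_1(G_e)\ge\max\{m(M),1\}$.

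\emph{Step 3 (the main obstacle): the non-sphere surplus $b_1(G_e)\ge m(M)+3$.} What remains is to upgrade Step 2 by $3$ when $M$ is not a sphere, equivalently to show that $b_1(G_e)\le m(M)+2$ forces $M\cong S^4$ — and I expect this to be the hard part, since it is exactly where the PL $4$-sphere has to be singled out. The mechanism I would pursue: a value of $b_1(G_e)$ within $2$ of its $\pi_1$-theoretic minimum forces $G_e$ to be extremely thin (for $m(M)=0$, a cycle, a theta-graph, or a two-loop graph) and the kernel of $\pi_1(G_e)\twoheadrightarrow\pi_1(M)$ to be "rank-cheap"; propagating this constraint through the cone decomposition of Step 2 one dimension down — to the balanced PL $3$-spheres $\lk_\Delta(v)$ and then to the balanced PL $2$-spheres inside them — together with the balanced lower-bound (Klee--Novik-type) estimates for balanced spheres, should force $\Delta$ onto the minimum possible number of vertices, hence onto the boundary of the $5$-dimensional cross-polytope, so that $M\cong S^4$. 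The delicate points are running this forcing on \emph{all} vertex links simultaneously rather than one at a time, pinning the additive constant to exactly $3$ so that it combines with the $2\chi(M)$ already extracted in Step 1 to give $+11$ (and not something weaker), and, when $m(M)\ge 1$, additionally exploiting that the coning relators must be numerous enough to realize the prescribed group $\pi_1(M)$. Theorem~\ref{main1} runs in exact parallel: the $5$-cycle of colours is replaced by a perfect matching of the four colours, $\chi(M)=0$, Step~1 reads $\mathcal{G}(\Delta)=\tfrac12\min_{\text{matchings}}\bigl(b_1(G_{M_1})+b_1(G_{M_2})\bigr)$, and again everything reduces to the surplus $b_1(G_e)\ge m(M)+3$ for non-spherical $M$, proved with balanced $2$-spheres in place of balanced $3$-spheres.
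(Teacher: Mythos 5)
Your Steps~1 and~2 are sound and essentially reproduce the paper's setup. The exact formula you derive is the paper's Equation~\eqref{4dim rho2} rewritten via $b_1(\D_S)=\Gamma_S+1$ for the connected graph $\D_S=G_e$: indeed $\rho_\varepsilon(\D)=1+2\chi(\D)+\sum\Gamma_S=2\chi(\D)-4+\sum b_1$. (One bookkeeping slip: the five pairs occurring in the sum are the \emph{diagonals} $\{\varepsilon_i,\varepsilon_{i+2}\}$ of the colour $5$-cycle, not the consecutive pairs $\{\varepsilon_j,\varepsilon_{j+1}\}$; since the pentagram of a $5$-cycle is again a $5$-cycle and every $5$-cycle on $[4]$ arises, this does not affect the minimum over $\varepsilon$.) The bound $b_1(G_e)\geq m(M)$ is Klee's result (Lemma~\ref{edge generators} in the paper), so the whole theorem indeed reduces to $b_1(\D_S)\geq m(M)+3$, i.e.\ $\Gamma_S\geq m(M)+2$, for every colour pair $S$.

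That reduction, however, is the entire content of the theorem, and your Step~3 does not prove it --- you explicitly present it as a mechanism you ``would pursue.'' Moreover, the mechanism sketched is not the right one. First, $\Gamma_S\leq 1$ does not force $\D$ onto the boundary of the cross-polytope: the paper's Lemmas~\ref{d-dim manifold with SS=0} and~\ref{d-dim manifold with SS=1} show only that $\|\D\|\cong\mathbb{S}^d$, via a join decomposition $\D=\D_{[d]\setminus S}\star\D_S$ when $\Gamma_S=0$ and a decomposition into balls glued along their boundaries when $\Gamma_S=1$; balanced lower-bound theorems count faces and say nothing about $\pi_1$, so they cannot deliver the group-theoretic surplus you need. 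Second, and more importantly, the surplus of $3$ in the paper is \emph{not} obtained by assuming $M\ncong\mathbb{S}^4$ and deriving thinness; it is proved unconditionally that $\Gamma_S\geq 2$ implies $\Gamma_S\geq m(\D)+2$ (Theorem~\ref{d-dim balanced NPM with SS geq 2}). The argument is an edge-path-group computation: when $\Gamma_S\geq 2$ there is a $(d-3)$-simplex of $\D_{[d]\setminus S}$ of degree at least $3$ (Lemma~\ref{d-dim npm with degree of (d-3) simplex geq 3}), whose link contains at least three distinct cycles of $\D_S$; one builds a spanning tree adapted to these cycles and exhibits three non-tree edges of $\D_S$ whose fundamental cycles lie in links of vertices and are therefore nullhomotopic (Lemma~\ref{unique cycle Ce is in link of a vertex}). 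Thus the free group $\pi_1(\D_S)$ of rank $b_1$ surjects onto $\pi_1(M)$ with at least three generators killed, giving $b_1-3\geq m$. The non-sphere hypothesis enters only to exclude $\Gamma_S\leq 1$. Your plan conflates these two separate steps, and the ``force the minimal triangulation'' route is a dead end; without an argument producing the three redundant generators, the proof is incomplete at its central point.
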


The sharpness of the inequalities in Theorems \ref{main1} and \ref{main2} is given by the sphere bundles over circles. Moreover, a given $3$-manifold $M$ is homeomorphic to the 3-sphere if and only if $\mathcal{G}_M \leq 3$, and a PL $4$-manifold $M$ is homeomorphic to the 4-sphere if and only if $\mathcal{G}_M \leq 2\chi (M)+10$.

We believe that the concept of balanced genus will open new avenues in topological combinatorics, providing novel insights into PL invariants, minimal triangulations, and classification problems in low-dimensional topology. Furthermore, the study of balanced genus raises several intriguing open questions. In the final section, we outline key research directions, including the classification of PL 4-manifolds based on balanced genus, generalizations to higher dimensions, and potential applications to the study of normal pseudomanifolds.

\section{Balanced complexes}
All the simplicial complexes considered in this article are finite, and their simplices are geometric. Moreover, we assume that $\emptyset$ is present in every simplicial complex as the only simplex of dimension -1. A simplicial complex is called {\em pure} if all its maximal simplices have the same dimension. The set of all vertices of a simplicial complex $\D$ is denoted by $V(\D)$, and the vertices contained in a simplex $\s$ are denoted by $V(\s)$. Any simplex contained in a simplicial complex $\D$ is called a face of $\D$, and the maximal faces of $\D$ are called facets. The collection of all faces of dimension at most 1 in $\D$ is called the {\em graph} of $\D$ and is denoted by $G(\D)$. Given a simplex $\s$, if $\t$ is a simplex obtained as the convex hull of a subset of the vertices of $\s$, then $\t$ is called a face of $\s$ and is denoted by $\t\leq\s$. For any set $S$, the cardinality of $S$ is denoted by $|S|$.

The join of two simplices $\s$ and $\t$ is the simplex obtained as the convex hull of vertices in $V(\s)\cup V(\t)$, denoted by $\s\t$ (or $\s\star\t$). Two simplicial complexes $\D_1$ and $\D_2$ are said to be independent if, for every pair of simplices $\s\in\D_1$ and $\t\in\D_2$ of dimensions $p$ and $q$, respectively, their join $\s\t$ is a simplex of dimension $p+q+1$. The join of two independent simplicial complexes $\D_1$ and $\D_2$ is defined as the simplicial complex $\D_1\star\D_2:=\{\s\t : \s\in\D_1,\t\in\D_2\}$. For a pair $(\s,\Delta)$, where $\s$ is a simplex and $\D$ is a simplicial complex, $\s\star\D$ represents the simplicial complex $\{\alpha:\alpha\leq\s\}\star\D$. 

The {\em link} of any face $\sigma$ in $\D$ is defined as the collection $\{ \gamma\in \D : \gamma\cap\sigma=\emptyset$ and $ \gamma\sigma\in \D\}$, denoted by $\lk (\sigma,\D)$. The {\em star} of $\sigma$ is defined as $\{\alpha : \alpha\leq\sigma \beta, \beta\in \lk (\sigma,\D)\}$, denoted by $\st (\sigma,\D)$. For any simplex $\s$ in a simplicial complex $\D$, the number of vertices in $\lk (\s,\D)$ is called the degree of the simplex $\s$, denoted by $\deg(\s,\D)$ or simply $\deg(\s)$ if the underlying simplicial complex is clear from the context.

Let $\D$ be a pure simplicial complex of dimension $d$. The collection of all simplices in $\D$, together with the subspace topology induced from $\mathbb{R}^m$ for some $m$, is called the geometric carrier of $\D$, denoted by $\|\D\|$. We say that $\D$ is a {\em normal $d$-pseudomanifold} if:  $(i)$ Every $(d-1)$-face of $\D$ is contained in exactly two facets of $\D$. $(ii)$ The link of every simplex of codimension two or more is connected. Throughout the article, we use $[d]=\{0,1,\dots, d\}$, and we denote the rank of the fundamental group of $\D$ by $m(\D)$.

\begin{definition}
{\rm Let $\D$ be a $d$-dimensional simplicial complex. We say that $\D$ is balanced if there exists a partition $(V_0,\dots, V_{d})$ of $V(\D)$, the set of vertices of $\D$, such that $|V(\s)\cap V_i|\leq 1$ for every simplex $\s\in\D$ and for every $V_i$ of the partition.} 
\end{definition}

The definition of a balanced simplicial complex given above can be interpreted in terms of vertex coloring of $\D$, where each vertex in the set $V_i$ is assigned the color $i$. In other words, the vertices of the simplicial complex $\D$ are equipped with a fixed vertex coloring $\kappa : V(\D) \to [d]$, such that $\kappa(v) = i$ for every vertex $v \in V_i$ and every $i \in [d]$. We denote such a balanced complex by $(\D, \kappa)$ (or simply $\D$ when $\kappa$ is understood). A normal $d$-pseudomanifold that is also balanced is called a {\em balanced normal $d$-pseudomanifold}. Similarly, a PL $d$-manifold that is also balanced is called a {\em balanced $d$-manifold}.

 The well-known combinatorial operations on simplicial complexes, such as the connected sum, can be adapted to the class of balanced, pure simplicial complexes by introducing additional constraints on the coloring of identified vertices. Let $\s_1$ and $\s_2$ be two facets of pure balanced simplicial complexes of the same dimension, say $\D_1$ and $\D_2$, respectively. A bijection $\psi: \s_1\to\s_2$ is said to be admissible if, for every vertex $x$ in $\s_1$, $\kappa(x)=\kappa(\psi(x))$. If $\psi$ is an admissible bijection between $\s_1$ and $\s_2$, then we can construct a new simplicial complex $\D_1\#_{\psi}\D_2$ by identifying all faces $\tau\leq \s_1$ with  $\psi(\tau)\leq\s_2$ and then removing the identified facet. We say that $\D_1\#_{\psi}\D_2$ is a {\em balanced connected sum} of $\D_1$ and $\D_2$.

 In \cite{Alexander}, it was established that two simplicial complexes are PL homeomorphic if and only if they are stellar equivalent. On the other hand, Pachner \cite{Pancher} showed that two closed PL manifolds are PL homeomorphic if and only if they are bistellar equivalent. The balanced analog of these results is described in terms of the {\em cross-flip} operation (cf. \cite{IzmestievKleeNovik}). Specifically, two balanced manifolds are PL homeomorphic if and only if there exists a sequence of cross-flips transforming one into the other \cite{IzmestievKleeNovik}. In \cite{KV}, the authors proved that two balanced manifolds with isomorphic boundaries are PL homeomorphic if and only if they are related by a sequence of cross-flips.  


One of the fundamental enumerative invariants of a $d$-dimensional simplicial complex $\D$ is its $f$-vector $(f_{-1}(\D),f_0(\D),\dots, f_d(\D))$, where $f_i(\D)$ denotes the number of $i$-simplices in $\D$ for $-1\leq i\leq d$, with $\emptyset$ considered the only simplex of dimension $-1$ in every simplicial complex. In the study of the $f$-vector and its properties in a simplicial complex $\D$, another associated vector, called the $h$-vector, $(h_0(\D), h_1(\D), \dots, h_{d+1}(\D))$, is commonly used. The $h$-vector is defined such that the $h$-polynomial satisfyes $\sum_{i=0}^{d+1} h_i(\D) x^{d-i}=\sum_{i=0}^{d+1}f_{i-1}(\D) (x-1)^{d+1-i}$. Equivalently, $$ h_i(\D)=\sum_{j=0}^{i}(-1)^{i-j} \binom{d+1-j}{i-j}f_{j-1}(\D).$$
\noindent We will use the notations $f_i$ and $h_i$ instead of $f_i(\D)$ and $h_i(\D)$, respectively, when the underlying simplicial complex is clear from the context.
 
The representation of balanced complexes in terms of vertex colors allows us to redefine the notion of $f$- and $h$-vectors for balanced simplicial complexes. Let $\D$ be a balanced simplicial complex of dimension $d$. For any subset of colors $S\subseteq [d]$, define $\D_S$ as the subcomplex of $\D$ consisting of all simplices $\s$ such that $\kappa(V(\s))\subseteq S$. We also define $f_{S}(\D)$ (or simply $f_{S})$) as the the number of faces in $\D$ with $\kappa (V(\s))=S$. The subcomplex $\D_S$ is known as the $S$-rank-selected subcomplex (or simply rank-selected subcomplex) of $\D$ in the literature. We use the notation $f^{ij}_{d-2}$ to denote $f_{[d]\setminus S}$, where $S=\{i,j\}\subseteq [d]$. The number $f_S$ (or $f_S(\D)$) is called the {\em flag $f$-number} of $\D$ \wrt the color set $S$, and the collection $(f_S)_{S\subseteq [d]}$ is called the {\em flag $f$-vector} of $\D$. The flag $f$-vector provides a refinement of the $f$-vector in the following sense: 
$$f_{i-1}=\sum_{S\subseteq [d], |S|=i}f_{S} \quad \text{for}\hspace{0.3cm} 0\leq i\leq d.$$  
By convention, we take $f_{\emptyset}=1$. Similarly, the \textit{ flag $h$-number} corresponding to the color set $T\subseteq [d]$, denoted by $h_T$ (or $h_T(\D)$), is defined as: $$h_{T}=\sum_{S\subseteq T} (-1)^{|T|-|S|}f_{S},$$
and the \textit{flag $h$-vector} of $\D$ is the collection $(h_S)_{S\subseteq [d]}$. The relationship between the $h$-vector and the flag $h$-vector of a balanced simplicial complex is given by the following \cite{Stanley}: 

$$h_j=\sum_{T\subseteq [d], |T|=j } h_{T}.$$

A pure simplicial complex of dimension $d$ is called a {\em semi-Eulerian complex} if, for every simplex $\s$ of dimension $i\geq 0$ in $\D$, the Euler characteristic of its link is the same as that of the $(d-i-1)$-dimensional sphere, i.e., $\chi(\lk (\s,\D))=\chi(\mathbb{S}^{d-i-1})$. A semi-Eulerian complex is called {\em Eulerian} if $\chi(\D)= \chi(\mathbb{S}^{d})$. A PL manifold is a typical example of a semi-Eulerian complex.

\begin{Lemma}{\rm \cite{Swartz2009}}\label{Swartz}
Let $\D$ be a balanced semi-Eulerian complex of dimension $d$. Then for every subset $S\subseteq [d]$, we have: $$h_{[d]-S} - h_{S}= (-1)^{|S|}[\chi(\D)-\chi(\mathbb{S}^{d})].$$ 
\end{Lemma}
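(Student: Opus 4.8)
The plan is to deduce the identity from a single ``colorful'' Dehn--Sommerville relation obtained by double-counting reduced Euler characteristics of links, in the spirit of the classical proof of the Dehn--Sommerville equations. Throughout write $\tilde\chi$ for the reduced Euler characteristic, so that $\tilde\chi(\mathbb{S}^{k})=(-1)^{k}$, and recall the M\"obius-inverse form of the definition of the flag $h$-numbers, $f_{S}(\D)=\sum_{R\subseteq S}h_{R}(\D)$. The crux is to prove that for every \emph{nonempty} $T\subseteq[d]$ one has the ``dual'' expansion
\[
f_{T}(\D)=\sum_{R\,:\,[d]\setminus T\,\subseteq\, R\,\subseteq\,[d]}h_{R}(\D),
\]
which we refer to below as $(\dagger)$. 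Granting $(\dagger)$, comparing it with $f_{T}=\sum_{R\subseteq T}h_{R}$ and re-indexing the sum in $(\dagger)$ via $R\mapsto[d]\setminus R$ gives $\sum_{U\subseteq T}\bigl(h_{U}-h_{[d]\setminus U}\bigr)=0$ for every nonempty $T$; one M\"obius inversion over the Boolean lattice $2^{[d]}$ then produces $h_{S}-h_{[d]\setminus S}=(-1)^{|S|}\bigl(h_{\emptyset}-h_{[d]}\bigr)$ for every $S\subseteq[d]$. Since $h_{\emptyset}=f_{\emptyset}=1$ while expanding the definition of the $h$-vector gives the closed form $h_{[d]}=(-1)^{d}\bigl(\chi(\D)-1\bigr)$, we obtain $h_{\emptyset}-h_{[d]}=\chi(\mathbb{S}^{d})-(-1)^{d}\chi(\D)$. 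For $d$ even this is already $\chi(\mathbb{S}^{d})-\chi(\D)$, and the Lemma follows; for $d$ odd, substituting $S=[d]$ into the relation just obtained gives $h_{[d]}-h_{\emptyset}=(-1)^{d+1}\bigl(h_{\emptyset}-h_{[d]}\bigr)=h_{\emptyset}-h_{[d]}$, hence $h_{\emptyset}=h_{[d]}$, so $\chi(\D)=\chi(\mathbb{S}^{d})=0$ and both sides of the asserted identity vanish.

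To establish $(\dagger)$ I would evaluate $\Sigma_{T}:=\sum_{\sigma\in\D,\ \kappa(V(\sigma))=T}\tilde\chi\bigl(\lk(\sigma,\D)\bigr)$ in two ways. First, because $\D$ is balanced, every face $\gamma$ of $\lk(\sigma,\D)$ has $\kappa(V(\gamma))\subseteq[d]\setminus T$ and $\dim\gamma=|\kappa(V(\gamma))|-1$, and as $\sigma$ runs over the faces with colour set $T$ the assignment $\gamma\mapsto\sigma\gamma$ is a bijection from $\{\gamma\in\lk(\sigma,\D):\kappa(V(\gamma))=U\}$ onto the faces of $\D$ with colour set $T\cup U$; hence $\Sigma_{T}=\sum_{U\subseteq[d]\setminus T}(-1)^{|U|-1}f_{T\cup U}=-\sum_{T\subseteq W\subseteq[d]}(-1)^{|W|-|T|}f_{W}$, and substituting $f_{W}=\sum_{R\subseteq W}h_{R}$ and collapsing the alternating sum $\sum_{W\supseteq T\cup R}(-1)^{|W|-|T|}$ -- which is $0$ unless $T\cup R=[d]$ -- leaves $\Sigma_{T}=(-1)^{d-|T|}\sum_{R\supseteq[d]\setminus T}h_{R}$. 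Second, because $\D$ is semi-Eulerian, each $\sigma$ with $\kappa(V(\sigma))=T$ has $\dim\sigma=|T|-1$, so $\tilde\chi(\lk(\sigma,\D))=(-1)^{d-|T|}$, and there are exactly $f_{T}$ such faces, whence $\Sigma_{T}=(-1)^{d-|T|}f_{T}$. Cancelling $(-1)^{d-|T|}$ yields $(\dagger)$. Running the same computation at $T=\emptyset$ only reproduces the tautology $\tilde\chi(\D)=\chi(\D)-1$, which is exactly why $T=\emptyset$ is the single defective instance of $(\dagger)$ and why the correction term $\chi(\D)-\chi(\mathbb{S}^{d})$ survives the inversion.

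The sign bookkeeping and the M\"obius inversion are routine; the genuine content, and the only place the hypotheses are used, is the two-way count of $\Sigma_{T}$: the bijection $\gamma\mapsto\sigma\gamma$ rests on balancedness (disjointness of colour sets in links), and the evaluation $\tilde\chi(\lk(\sigma,\D))=(-1)^{d-|T|}$ is precisely the semi-Eulerian hypothesis. I expect the main point to watch is the exceptional status of $T=\emptyset$, which imposes no local constraint: the two expressions for $\Sigma_{\emptyset}$ then coincide trivially, and it is the resulting failure of $(\dagger)$ at $T=\emptyset$ that injects the term $(-1)^{|S|}\bigl[\chi(\D)-\chi(\mathbb{S}^{d})\bigr]$ into the final relation.
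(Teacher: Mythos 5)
The paper offers no proof of this lemma at all --- it is imported verbatim from \cite{Swartz2009} --- so there is nothing internal to compare against; what you have written is essentially the standard proof of the flag Dehn--Sommerville relations, and it is correct. The two evaluations of $\Sigma_T=\sum_{\kappa(V(\sigma))=T}\tilde\chi(\lk(\sigma,\D))$ both check out: the bijection $\gamma\mapsto\sigma\gamma$ is exactly where balancedness enters (the color sets of $\sigma$ and of faces of its link are disjoint, so the decomposition of a face with color set $T\cup U$ is unique), the collapse of $\sum_{W\supseteq T\cup R}(-1)^{|W|-|T|}$ to the case $T\cup R=[d]$ is right, and the second evaluation $\tilde\chi(\lk(\sigma,\D))=(-1)^{d-|T|}$ is precisely the semi-Eulerian hypothesis, which is only available for $T\neq\emptyset$; the M\"obius inversion then correctly isolates the defect $h_\emptyset-h_{[d]}$ coming from that single missing instance, and your closed form $h_{[d]}=(-1)^d(\chi(\D)-1)$ supplies the constant. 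The one point a reader might question --- that for odd $d$ one gets $h_\emptyset-h_{[d]}=\chi(\mathbb{S}^d)-(-1)^d\chi(\D)$, which is not literally $\chi(\mathbb{S}^d)-\chi(\D)$ --- you resolve correctly: setting $S=[d]$ in the derived relation forces $h_\emptyset=h_{[d]}$, hence $\chi(\D)=0=\chi(\mathbb{S}^d)$, and both sides of the asserted identity vanish. I verified the argument against small examples (a bipartite cycle, the octahedron, a $9$-vertex balanced torus) and it is consistent throughout.
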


\begin{Corollary}\label{equality of h values}
If $\D$ is a balanced $3$-manifold, then $h_{S}=h_{[3]-S}$ for every subset $S\subseteq [3]$ with $|S|=2$.
\end{Corollary}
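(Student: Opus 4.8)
The plan is to deduce this directly from Lemma~\ref{Swartz}, the balanced Dehn--Sommerville relations. First I would recall that a closed PL $3$-manifold $\D$ is semi-Eulerian: the link of every simplex of dimension $i$ is a PL $(d-i-1)$-sphere or has the same Euler characteristic as one, which is exactly the semi-Eulerian condition noted in the text just before Lemma~\ref{Swartz}. Hence Lemma~\ref{Swartz} applies to $\D$ with $d=3$.

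Next I would pin down the right-hand side of the identity in Lemma~\ref{Swartz}. For $d=3$ we have $\chi(\mathbb{S}^3)=0$, and since $\D$ is a closed odd-dimensional manifold, Poincar\'e duality forces $\chi(\D)=0$ as well; thus $\chi(\D)-\chi(\mathbb{S}^3)=0$. Substituting into Lemma~\ref{Swartz} gives, for every $S\subseteq[3]$,
$$h_{[3]-S}-h_S=(-1)^{|S|}\bigl[\chi(\D)-\chi(\mathbb{S}^3)\bigr]=0,$$
so $h_S=h_{[3]-S}$ for all $S\subseteq[3]$. Specializing to $|S|=2$ (in which case $|[3]-S|=2$ as well) yields the claimed symmetry among the flag $h$-numbers indexed by $2$-subsets of the color set.

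There is essentially no obstacle here beyond correctly invoking the two standard inputs: that a closed PL manifold is semi-Eulerian (so Lemma~\ref{Swartz} is available) and that a closed odd-dimensional manifold has vanishing Euler characteristic (so the correction term drops out). The statement for $|S|=2$ is just the special case relevant to the $3$-dimensional arguments that follow, but the same computation shows the identity in fact holds for every $S\subseteq[3]$.
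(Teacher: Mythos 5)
Your proof is correct and is exactly the argument the paper intends: the corollary is stated as an immediate consequence of Lemma~\ref{Swartz}, using that a closed PL $3$-manifold is semi-Eulerian and that $\chi(\D)=\chi(\mathbb{S}^3)=0$ makes the correction term vanish. No issues.
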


\section{Balanced genus}
Let $\D$ be a balanced  normal $d$-pseudomanifold, where $d\geq 3$. Note that $\D$ is orientable (resp. non-orientable) if and only if the dual graph $\Lambda(\Delta)$ is bipartite (resp., non-bipartite). It is known (see \cite{CP, Gag, Gross, Stahl}) that for every cyclic permutation  
$
\varepsilon = (\varepsilon_0, \varepsilon_1, \varepsilon_2, \dots, \varepsilon_d)$
of $[d]$, there exists a regular embedding  
$
i_\varepsilon : \Lambda(\Delta) \hookrightarrow F_\varepsilon,
$
where $F_\varepsilon$ is a closed orientable surface whenever $\Lambda(\Delta)$ is bipartite,  
and a closed non-orientable surface whenever $\Lambda(\Delta)$ is non-bipartite.
 Let $\varepsilon= (\varepsilon_0, \varepsilon_1, \varepsilon_2, \dots , \varepsilon_d)$ be a cyclic permutation of $[d]$, and let $i_\varepsilon : \Lambda(\Delta)\hookrightarrow F_\varepsilon$ be a regular embedding of $\Lambda(\D)$, where $F_{\varepsilon}$ is a closed surface. Observe that the regular embedding $i_\varepsilon$ provides a cell complex structure on the surface $F_{\varepsilon}$. The set of vertices and edges of this cell complex structure on $F_{\varepsilon}$, inherited from $i_\varepsilon$, is the same as that of $\Lambda(\Delta)$.

The number of vertices in $\Lambda(\Delta)$ is the same as $f_d(\D)$. Since  $\Lambda(\Delta)$ is a $(d+1)$-regular colored graph, the number of edges in $\Lambda(\Delta)$ is $\frac{(d+1) f_d}{2}$. Furthermore, since the embedding $i_\varepsilon$ is regular, the number of faces in the cell complex structure of $F_{\varepsilon}$, inherited from $i_\varepsilon$, is equal to $\sum_{i \in \mathbb{Z}_{d+1}} C_{\varepsilon_i\varepsilon_{i+1}}$, where $C_{\varepsilon_i\varepsilon_{i+1}}$ represents the number of bi-colored cycles in  $\Lambda(\Delta)$ with colors $\varepsilon_i$ and $\varepsilon_{i+1}$ (with addition modulo $d+1$). On the other hand, each $(d-2)$-simplex in $\Delta$ with colors $[d]\setminus \{\varepsilon_i,\varepsilon_{i+1}\}$ corresponds uniquely to a bi-colored cycle in $\Lambda(\Delta)$ with colors $\varepsilon_i$ and $\varepsilon_{i+1}$.
Therefore,
$$\chi (F_\varepsilon)= \sum_{i \in \mathbb{Z}_{d+1}}f_{d-2}^{\varepsilon_i\varepsilon_{i+1}} + (1-d)  
\frac{f_d}{2},$$
where $f_{d-2}^{\varepsilon_i\varepsilon_{i+1}}$ denotes the number of $(d-2)$-simplices in $\Delta$ with colors $[d]\setminus \{\varepsilon_i,\varepsilon_{i+1}\}$.
In the orientable (resp. non-orientable) case, the integer  
\begin{equation}\label{rho1}
\rho_{\varepsilon}(\Delta) := 1 - \frac{\chi (F_\varepsilon)}{2}
\end{equation}
equals the genus (resp. half of the genus) of the surface $F_{\varepsilon}$, and we refer to it as the \textit{balanced $\e$-genus} of $\D$ corresponding to the cyclic permutation $\e$ of $[d]$. Therefore, the balance genus of $\Delta$ is given by
$$\mathcal G(\Delta)= \min \{\rho_{\varepsilon}(\Delta) \ : \  \varepsilon \ \text{ is a cyclic permutation of } \ [d]\}.$$
If $M$ is a PL manifold, then the balanced genus of $M$ is given by, $$\mathcal G{_M}= \min \{\mathcal G(\Delta) \ : \D \text{ is a balanced triangulation of} \ M\}.$$

Let $\D$ be a balanced normal $d$-pseudomanifold. For each cyclic permutation $\varepsilon=(\e_0,\e_1,\dots,\e_d)$ of $[d]$, the balanced $\e$-genus of $\D$ corresponding to $\e$, as defined in equation \eqref{rho1}, can be expressed as:

\begin{equation}\label{rho2}
 \rho_{\varepsilon}(\D)=1-\frac{1-d}{4}f_{d}-\frac{1}{2}\sum_{i\in\mathbb{Z}_{d+1}}f_{d-2}^{\e_i\e_{i+1}}.
\end{equation} 

Note that if $\D$ is an orientable balanced normal $d$-pseudomanifold, then $\mathcal{G}(\Delta)$ is a non-negative integer. However, if $\D$ is a non-orientable balanced normal $d$-pseudomanifold, then $\mathcal{G}(\Delta)=\frac{n}{2}$, where $n$ is a non-negative integer.

 \begin{Remark}
     {\rm Let $\D_1$ and $\D_2$ be two balanced normal $d$-pseudomanifolds, and let $\D_1\#\D_2$ be a balanced connected sum of $\D_1$ and $\D_2$. Then, $f_{d} (\D_1\#\D_2)=f_{d} (\D_1)+f_{d} (\D_2)-2$, and for any two-element subset $\{i,j\}$ of $[d]$, we have $f^{ij}_{d-2} (\D_1\#\D_2)=f^{ij}_{d-2} (\D_1)+f^{ij}_{d-2} (\D_2)-1$. Therefore, for every cyclic permutation $\e$ of $[d]$, we have 
     \begin{eqnarray*}
\rho_{\varepsilon}(\D_1\#\D_2)&=& 1-\frac{1-d}{4}(f_{d} (\D_1)+f_{d} (\D_2)-2)-\frac{1}{2}\sum_{i\in\mathbb{Z}_{d+1}}(f^{\e_i\e_{i+1}}_{d-2} (\D_1)+f^{\e_i\e_{i+1}}_{d-2} (\D_2)-1)\\
&=& \rho_{\varepsilon}(\D_1)+\rho_{\varepsilon}(\D_2)+\frac{1-d}{2}+\frac{d+1}{2}\\
&=& \rho_{\varepsilon}(\D_1)+\rho_{\varepsilon}(\D_2).
\end{eqnarray*}
     }
 \end{Remark}
 
\begin{Lemma}\label{d dim inequality}
Let $\D$ be a balanced normal $d$-pseudomanifold, where $d\geq 3$. Then $\mathcal{G}(\D)\geq 1+\frac{d-3}{8}f_d$.
\end{Lemma}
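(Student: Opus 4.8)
The plan is to bound, uniformly over all cyclic permutations $\varepsilon$ of $[d]$, the sum $\sum_{i\in\mathbb{Z}_{d+1}} f_{d-2}^{\varepsilon_i\varepsilon_{i+1}}$ occurring in the closed form \eqref{rho2} for $\rho_{\varepsilon}(\D)$, and then substitute that bound back into \eqref{rho2}. A short rearrangement shows that the claimed inequality $\mathcal{G}(\D)\geq 1+\frac{d-3}{8}f_d$ is equivalent to the statement that
$$\sum_{i\in\mathbb{Z}_{d+1}} f_{d-2}^{\varepsilon_i\varepsilon_{i+1}} \leq \frac{d+1}{4}\,f_d$$
for every cyclic permutation $\varepsilon$ of $[d]$. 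Since this must then hold in particular for a permutation realizing the minimum in the definition of $\mathcal{G}(\D)$, it suffices to prove this combinatorial inequality, and in fact I would prove it for every $\varepsilon$ at once.

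The key step is the per-pair estimate: for any two-element subset $\{i,j\}\subseteq[d]$,
$$f_{d-2}^{ij} \leq \frac{f_d}{4}.$$
I would establish this by a double count of incidences between facets of $\D$ and $(d-2)$-faces whose color set is $[d]\setminus\{i,j\}$. On one side, each facet $\sigma$ of $\D$, being a $d$-simplex whose $d+1$ vertices carry all $d+1$ colors, contains exactly one $(d-2)$-face with color set $[d]\setminus\{i,j\}$, namely the face obtained by deleting the $i$-colored and the $j$-colored vertex of $\sigma$. On the other side, a fixed $(d-2)$-face $\tau$ with color set $[d]\setminus\{i,j\}$ is contained in exactly as many facets of $\D$ as $\lk(\tau,\D)$ has edges. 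Here both defining properties of a balanced normal $d$-pseudomanifold enter: since $\tau$ has codimension two, $\lk(\tau,\D)$ is connected and is a $1$-dimensional normal pseudomanifold, hence a single cycle; and since $\D$ is balanced with only the colors $i$ and $j$ available on the vertices below $\tau$, this cycle is properly $2$-colored, hence has even length, so it cannot be a triangle and therefore has at least $4$ vertices and $4$ edges. Thus every such $\tau$ lies in at least $4$ facets, and comparing the two counts gives $f_d \geq 4 f_{d-2}^{ij}$.

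Summing the per-pair bound over the $d+1$ consecutive pairs $\{\varepsilon_i,\varepsilon_{i+1}\}$, $i\in\mathbb{Z}_{d+1}$, of a cyclic permutation yields $\sum_{i\in\mathbb{Z}_{d+1}} f_{d-2}^{\varepsilon_i\varepsilon_{i+1}} \leq (d+1)f_d/4$, the required inequality. Substituting this into \eqref{rho2}, using $1-\frac{1-d}{4}f_d = 1+\frac{d-1}{4}f_d$ and $\frac{d-1}{4}-\frac{d+1}{8} = \frac{d-3}{8}$, gives $\rho_{\varepsilon}(\D)\geq 1+\frac{d-3}{8}f_d$ for every cyclic permutation $\varepsilon$, and taking the minimum over $\varepsilon$ completes the argument.

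I expect the only delicate point to be the structural claim that the link of a codimension-two face is an even cycle of length at least four: one must correctly combine the connectivity hypothesis with the pseudomanifold condition to conclude that the link is a single circle, and then invoke balancedness to exclude odd cycles (in particular a triangle). The remainder is elementary double counting and arithmetic.
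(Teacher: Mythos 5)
Your proposal is correct and follows essentially the same route as the paper: the paper likewise observes that the link of each $(d-2)$-simplex is a bi-colored (hence even, hence length $\geq 4$) cycle, that stars of distinct $(d-2)$-simplices with the same color set share no facets, deduces $f_d\geq 4f_{d-2}^{ij}$, and substitutes into \eqref{rho2}. Your write-up merely makes the double count and the ``link is a single even cycle'' step more explicit than the paper does.
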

\begin{proof}
Let $\s$ be a $(d-2)$-simplex in $\D$. Since $\D$ is balanced, $\lk (\s,\D)$ contains at least four edges. If $\s_1$ is any other $(d-2)$-simplex such that $\kappa(V(\s))=\kappa(V(\s_1))$, then there are no common facets between $\st (\s,\D)$ and $\st (\s_1,\D)$. Let $[d]\setminus \kappa(V(\s))=\{i,j\}$. Since the link of each $(d-2)$-simplex in $\D$ contains at least four edges of color $\{i,j\}$, we have $f_d\geq 4 f_{d-2}^{ij}$. Therefore, for every cyclic permutation $\varepsilon$ of $[d]$, we have $\rho_{\varepsilon}(\D)\geq 1+\frac{d-1}{4}f_d-\frac{d+1}{8}f_d = 1+\frac{d-3}{8}f_d$. This completes the proof. 
\end{proof}


\begin{Lemma}\label{minimum balanced genus of manifold}
Let $d\geq 3$, and let $\Delta$ be a balanced normal $d$-pseudomanifold. Then $\mathcal{G}(\Delta)\;\geq\; 1+(d-3)2^{d-2}$. In particular, for a PL $d$-manifold $M$, we have $\mathcal{G}_M \;\geq\; 1+(d-3)2^{d-2},$ and equality is attained when $M$ is the $d$-sphere.
\end{Lemma}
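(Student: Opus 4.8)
The plan is to combine Lemma~\ref{d dim inequality} with the classical fact that a balanced normal $d$-pseudomanifold has at least $2^{d+1}$ facets. Since $d\geq 3$, the bound $\mathcal G(\D)\geq 1+\frac{d-3}{8}f_d$ of Lemma~\ref{d dim inequality} is non-decreasing in $f_d$, so it suffices to establish $f_d(\D)\geq 2^{d+1}$ for every balanced normal $d$-pseudomanifold $\D$; substituting then gives $\mathcal G(\D)\geq 1+\frac{d-3}{8}\cdot 2^{d+1}=1+(d-3)2^{d-2}$, and minimising over all balanced triangulations yields the stated bound for a PL $d$-manifold $M$ (recall that every triangulation of a closed PL $d$-manifold is a balanced normal $d$-pseudomanifold after passing to a balanced refinement).

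To prove $f_d(\D)\geq 2^{d+1}$ I would induct on $d$. For the base case $d=1$, a balanced normal $1$-pseudomanifold is a connected $2$-regular simple graph that is properly $2$-coloured, hence an even cycle of length at least $4=2^2$. For the inductive step, first note that each colour class $V_i$ satisfies $|V_i|\geq 2$: choosing any facet $\s$ and deleting its (unique) vertex of colour $i$ gives a $(d-1)$-face $\t$, and the two facets containing $\t$ are $\t\star u$ and $\t\star v$ with $u\neq v$, both of colour $i$. Next, for any vertex $v$ the link $\lk(v,\D)$ is a balanced normal $(d-1)$-pseudomanifold: purity and the ``exactly two facets'' condition are inherited because any $(d-1)$-face through $v$ lies in two facets, each necessarily containing $v$, while connectivity of links of low-dimensional faces of $\lk(v,\D)$ follows from $\lk(\s,\lk(v,\D))=\lk(\s\star v,\D)$ together with the normality of $\D$. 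By the inductive hypothesis $\lk(v,\D)$ has at least $2^{d}$ facets, i.e.\ $v$ lies in at least $2^{d}$ facets of $\D$. Fixing a colour $i$ and summing over $v\in V_i$, each facet of $\D$ is counted exactly once (it has a unique vertex of colour $i$), so $f_d(\D)=\sum_{v\in V_i} f_{d-1}(\lk(v,\D))\geq 2\cdot 2^{d}=2^{d+1}$.

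For the sharpness statement I would take $\D=\mathcal O_d$, the boundary complex of the $(d+1)$-dimensional cross-polytope, i.e.\ the $(d+1)$-fold join $\mathbb{S}^0\star\cdots\star\mathbb{S}^0$, which is a balanced triangulation of $\mathbb{S}^d$ under the obvious colouring. Here $f_d(\mathcal O_d)=2^{d+1}$ and, for every pair $\{i,j\}\subseteq[d]$, $f_{d-2}^{ij}(\mathcal O_d)=2^{d-1}$ (a $(d-2)$-face of missing colours $i,j$ is obtained by choosing one of the two vertices in each of the remaining $d-1$ colour classes). Plugging these into \eqref{rho2} gives, for every cyclic permutation $\e$ of $[d]$,
\[
\rho_{\e}(\mathcal O_d)=1-\frac{1-d}{4}\,2^{d+1}-\frac{1}{2}(d+1)2^{d-1}=1+(d-1)2^{d-1}-(d+1)2^{d-2}=1+(d-3)2^{d-2}.
\]
Hence $\mathcal G(\mathcal O_d)=1+(d-3)2^{d-2}$, so $\mathcal G_{\mathbb{S}^d}\leq 1+(d-3)2^{d-2}$, and together with the lower bound this forces equality.

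The only genuinely delicate point is the facet estimate $f_d\geq 2^{d+1}$ — concretely, verifying that normality is inherited by vertex links so that the induction runs cleanly (alternatively one could simply cite the balanced Lower Bound Theorem of Klee--Novik). Everything after that is substitution into the already-established identities \eqref{rho1} and \eqref{rho2}.
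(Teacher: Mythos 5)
Your proposal is correct and follows essentially the same route as the paper: both reduce to the facet bound $f_d(\Delta)\geq 2^{d+1}$, proved by induction on $d$ via the observation that vertex links are balanced normal $(d-1)$-pseudomanifolds and that each colour class contains at least two vertices, and then substitute into Lemma~\ref{d dim inequality}; sharpness is obtained in both cases from the octahedral sphere $\partial(a_0b_0)\star\cdots\star\partial(a_db_d)$ via \eqref{rho2}. Your write-up merely supplies a few details the paper leaves implicit (the $d=1$ base case and the verification that normality passes to links).
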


\begin{proof}
Since $\Delta$ is balanced, its vertices admit a proper $(d+1)$-coloring with at least two vertices in each color class. Let $v,w$ be two vertices of the same color. The link of a vertex in $\Delta$ is a balanced normal $(d-1)$-pseudomanifold, so by induction it has at least $2^d$ facets. Hence, $\Delta$ has at least $2^d$ facets containing $v$ and at least $2^d$ facets containing $w$. These sets are disjoint, since a facet cannot contain two vertices of the same color. Therefore, $\Delta$ has at least $2^{d+1}$ facets. Consequently, Lemma \ref{d dim inequality} implies

$$
\mathcal{G}(\Delta)\;\geq\; 1+(d-3)2^{d-2}.
$$

For a PL $d$-manifold $M$, the same bound holds.
To establish the equality, consider the balanced triangulation of $\mathbb{S}^d$, known as the octahedral $d$-sphere, given by $\D=\p(a_0b_0)\star\cdots\star\p(a_db_d)$. This complex satisfies $f_d(\D)=2^{d+1}$ and $f^{ij}_{d-2}(\D)=2^{d-1}$ for every two-element subset $\{i,j\}\subseteq [d]$. Therefore, from the relation mentioned in \eqref{rho2}, we have $\rho_{\e}(\D)=1+ (d-3)2^{d-2}$ for every cyclic permutation $\e$ of $[d]$. Hence, for $d\geq 3$, we have $\mathcal G{_{\mathbb{S}^d}}=1+ (d-3) 2^{d-2}$.
\end{proof}

\subsection{Balanced genus in low-dimensions}
In the case of dimension 3, the balanced $\e$-genus is given by
$$   \rho_{\varepsilon}(\D)=1+\frac{1}{2}[f_3-\sum_{i\in\mathbb{Z}_{4}}f_{1}^{\e_i\e_{i+1}}].$$
The Dehn-Sommerville equation asserts that if $\D$ is a triangulated 3-manifold, then $f_3=f_1-f_0$. Therefore, the balanced $\varepsilon$-genus of a balanced 3-manifold $\D$ is given by $$\rho_{\varepsilon}(\D)= 1+\frac{1}{2}[f_1-f_0-\sum_{i\in\mathbb{Z}_{4}}f_{1}^{\e_i\e_{i+1}}].$$ Note that with the permutation $\varepsilon:=(\e_0,\e_1,\e_2,\e_3)$, the total number of edges $f_1$ of $\D$ can be expressed as $$f_1=f_{\{\e_0,e_1\}}+f_{\{\e_1,\e_2\}}+f_{\{\e_2,\e_3\}}+f_{\{\e_3,\e_0\}}+f_{\{\e_0,\e_2\}}+f_{\{\e_1,\e_3\}}.$$ Therefore, $f_1-\sum_{i\in\mathbb{Z}_{4}}f_{1}^{\e_i\e_{i+1}}=f_{\{\e_0,\e_2\}}+f_{\{\e_1,\e_3\}}$, and the balanced $\e$-genus is given by $$\rho_{\varepsilon}(\D)= 1+\frac{1}{2}[f_{\{\e_0,\e_2\}}+f_{\{\e_1,\e_3\}}-f_0].$$ It follows from Corollary \ref{equality of h values} that $h_{\{\e_0,\e_2\}}=h_{\{\e_1,\e_3\}}$, which implies that $f_{\{\e_0,\e_2\}}-f_{\{\e_0\}}-f_{\{\e_2\}}=f_{\{\e_1,\e_3\}}-f_{\{\e_0\}}-f_{\{\e_2\}}$. Thus, if $\D$ is a balanced 3-manifold, then for a cyclic permutation $\e=(\e_0,\e_1,\e_2,\e_3)$, the balanced $\e$-genus is given by

\begin{equation}\label{3dim rho}
 \rho_{\varepsilon}(\D)= 1+ f_{\{\e_0,\e_2\}}-f_{\{\e_0\}}-f_{\{\e_2\}}. 
\end{equation}

Let $\D$ be a balanced $4$-manifold. Then, for a cyclic permutation $\varepsilon$ of $[4]$, the balanced $\varepsilon$-genus $\rho_\e$ is given by the following expression:

\begin{equation}\label{4dim rho1}
 \rho_{\varepsilon}(\D)=1+\frac{3}{4}f_4-\frac{1}{2}.\sum_{i\in\mathbb{Z}_{5}}f_{2}^{\e_i\e_{i+1}}.
\end{equation}
Now, Lemma \ref{Swartz} implies that $h_{[4]\setminus\{\e_i,\e_{i+1}\}}-h_{\{\e_i,\e_{i+1}\}}=\chi(\D)-2$. Therefore, for a cyclic permutation $\e=(\e_0,\dots,\e_4)$, we have:
\begin{equation}\label{f012 in 4-dim}
 f_{\{\e_0,\e_1,\e_2\}}-f_{\{\e_0,\e_1\}}-f_{\{\e_1,\e_2\}}-f_{\{\e_2,\e_0\}}-f_{\{\e_3,\e_4\}}+f_0=\chi(\D).
\end{equation}
The Dehn-Sommerville equation for Euler $4$-manifolds states that $f_4(\D)=2f_1(\D)-6f_0(\D)+6\chi(\D)$. Therefore, using equation \eqref{f012 in 4-dim} and summing  over all five 3-cycles appearing in the formula of $\rho_{\e}(\D)$, we obtain:

\begin{eqnarray*}
\rho_{\varepsilon}(\D)&=& 1+\frac{3}{4}f_4+\frac{1}{2}(5f_0-f_1-2\sum_{i\in\mathbb{Z}_{5}}f_{\{\e_i\e_{i+1}\}}-5\chi(\D))\\
&=& 1+\frac{1}{2}(2f_1-4f_0+4\chi(\D)- 2\sum_{i\in\mathbb{Z}_{5}}f_{\{\e_i\e_{i+1}\}})\\
&=& 1+2 \chi(\D)+ f_1- \sum_{i\in\mathbb{Z}_{5}}f_{\{\e_i\e_{i+1}\}}-2f_0.
\end{eqnarray*}
Thus, if $\D$ is a balanced triangulation of a PL $4$-manifold, then 
\begin{equation}\label{4dim rho2}
 \rho_{\varepsilon}(\D)= 1+2 \chi(\D)+ f_1(\D)- \sum_{i\in\mathbb{Z}_{5}}f_{\{\e_i\e_{i+1}\}}(\D)-2f_0(\D).
\end{equation}
It is important to note that, from the expressions in \eqref{3dim rho} and \eqref{4dim rho2}, if $M$ is a PL $d$-manifold with $d = 3$ or $4$, then $\mathcal{G}_M$ is a non-negative integer. Moreover, it was proved in \cite{CP} that for non-orientable PL $d$-manifolds (with $d \geq 3$), the surface $F_{\varepsilon}$ has even genus, which implies that $\mathcal{G}_M$ is always an integer.

\section{The edge-path group}
In this section, we review the concept of the fundamental group on simplicial complexes, as described in the book by E. Spanier \cite{Spanier}. Let $\D$ be a simplicial complex. As previously defined, a 1-simplex $e$ is also called an edge that can be considered as an ordered pair $(v,v')$, where $v$ and $v'$ are two vertices in $\D$. The first vertex, $v$, is called the origin of the edge, denoted as Orig($e$), and the second vertex, $v'$, is called the end of the edge, denoted as End($e$). An edge-path $\zeta$ of length $r$ in $\D$ is a non-empty sequence of edges $e_1e_2\cdots e_r$ such that End($e_i$) = Orig($e_{i+1}$) for $1\leq i\leq r-1$. The origin and the end of the path $\zeta$ are defined as  Orig $\zeta$ = Orig($e_1$) and End $\zeta$ = End($e_r$), respectively. A closed edge-path at a vertex $v_0$ is an edge-path $\zeta$ such that Orig $\zeta$ = $v_0$ = End $\zeta$.

Let $\z_1$ and $\z_2$ be two edge-paths in $\D$, with End $\z_1$ = Orig $\z_2$. Then, the product edge-path $\z_1\z_2$ is defined as the edge-path consisting of the sequence of edges in $\z_1$ followed by the sequence of edges in $\z_2$. Therefore, Orig $\z_1\z_2$ = Orig $\z_1$ and End $\z_1\z_2$ = End $\z_2$. Two edge-paths $\z_1$ and $\z_2$ in $\D$ are called \textit{simply equivalent} if there exist vertices $v_1,v_2,$ and $v_3$ in $\D$ such that $v_1v_2v_3$ is a simplex in $\D$, and the unordered pair $\{\z_1,\z_2\}$ equals one of the following: 

\begin{enumerate}[$(i)$]
\item The unordered pair $\{(v_1,v_3), (v_1,v_2)(v_2,v_3)\}$.
\item The unordered pair $\{\z(v_1,v_3), \z(v_1,v_2)(v_2,v_3)\}$ for some edge-path $\z$ in $\D$ with End $\z = v_1$.
\item The unordered pair $\{(v_1,v_3)\z', (v_1,v_2)(v_2,v_3)\z'\}$ for some edge-path $\z'$ in $\D$ with Orig $\z' = v_3$.
\item The unordered pair $\{\z(v_1,v_3)\z', \z(v_1,v_2)(v_2,v_3)\z'\}$ for some edge-paths $\z$ and $\z'$ in $\D$ with End $\z = v_1$, and Orig $\z' = v_3$.
\end{enumerate}

Two edge-paths $\z$ and $\z'$ are said to be \textit{equivalent}, denoted by $\z\sim \z'$, if there exists a finite sequence of edge-paths $\z^{(0)},\z^{(1)},\dots , \z^{(k)}$ such that $\z=\z^{(0)}, \z'=\z^{(k)},$ and $\z^{(i-1)}$ and $\z^{(i)}$ are simply equivalent for $1\leq i\leq k$. If  $\z\sim \z'$, then clearly Orig $\z$ = Orig $\z'$ and End $\z$ = End $\z'$. We denote $[\z]$ as the equivalence class of paths containing the edge-path $\z$.

Fix a vertex $v_0$ in $\D$. Let $E(\D,v_0)$ be the set of all equivalence classes $[\z]$, where $\z$ is a closed edge-path in $\D$ with Orig $\z = v_0 =$ End $\z$. Note that if $\z_1\sim \z_1^{'}$ and $\z_2\sim \z_2^{'}$, with Orig $\z_2 =$ End $\z_1$, then $\z_1\z_2\sim \z_1^{'}\z_2^{'}$. Thus, there is a well-defined binary operation on $E(\D,v_0)$, namely $[\z_1]\circ [\z_2]=[\z_1\z_2]$. The set $E(\D,v_0)$, together with the binary operation `$\circ$' forms a group called the \textit{edge-path group} of $\D$. By convention, for a vertex $v$, we take $[(v,v)] =1$.

\begin{Lemma}{\rm \cite{Spanier}}
Let $\D$ be a simplicial complex, and let $v_0$ be a vertex in $\D$. The fundamental group $\pi_{1}(\|\D\|,v_0)$ is isomorphic to the edge-path group $E(\D,v_0)$.
\end{Lemma}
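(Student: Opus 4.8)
The plan is to construct an explicit isomorphism $\theta\colon E(\D,v_0)\to\pi_1(\|\D\|,v_0)$ by realizing edge-paths as genuine paths in $\|\D\|$ and then invoking the simplicial approximation theorem (see \cite{Spanier}) twice. To an edge $e=(v,v')$ I would assign the path $p_e\colon[0,1]\to\|\D\|$ running affinely from $v$ to $v'$ inside the simplex $\{v,v'\}$ (the constant path at $v$ when $v=v'$), and to an edge-path $\zeta=e_1\cdots e_r$ the concatenation $p_\zeta:=p_{e_1}\ast\cdots\ast p_{e_r}$, which is a loop at $v_0$ when $\zeta$ is a closed edge-path at $v_0$. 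The point to check is that simply equivalent edge-paths give loops homotopic rel endpoints: in each of the four cases the only change is to replace an occurrence of $(v_1,v_2)(v_2,v_3)$ by $(v_1,v_3)$ (possibly flanked by fixed subpaths), and the requisite homotopy takes place inside the convex set $\|\{v_1,v_2,v_3\}\|$, where any two paths with the same endpoints are homotopic rel endpoints. Since $p_{\zeta_1\zeta_2}=p_{\zeta_1}\ast p_{\zeta_2}$, this yields a well-defined homomorphism $\theta$. For surjectivity, given a loop $\alpha$ at $v_0$ I would subdivide $[0,1]$ finely enough that simplicial approximation produces a simplicial map from the subdivided interval to $\D$ fixing $0,1\mapsto v_0$ and homotopic to $\alpha$ rel $\{0,1\}$; reading this map off on the successive subintervals gives an edge-path $\zeta$ with $\theta([\zeta])=[\alpha]$.

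The heart of the proof is injectivity. Suppose $\zeta$ is a closed edge-path at $v_0$ with $\theta([\zeta])=1$, witnessed by a homotopy $H\colon[0,1]^2\to\|\D\|$ with $H(\cdot,0)=p_\zeta$, $H(\cdot,1)\equiv v_0$, and $H(0,\cdot)=H(1,\cdot)\equiv v_0$. I would triangulate the square $Q$ by a triangulation $T$ whose trace on the bottom edge refines the partition recording $\zeta$, and fine enough that a relative simplicial approximation $g\colon|T|\to\D$ of $H$ exists which equals $p_\zeta$ on the bottom edge and is constant $v_0$ on the other three sides. Enumerate the triangles of $T$ as $\D_1,\dots,\D_N$ so that $Q_k:=\D_1\cup\cdots\cup\D_k$ is a disk for each $k$, growing from the bottom edge; reading $g$ along the frontier arc of $Q_k$ from $(0,0)$ to $(1,0)$ produces an edge-path $\zeta_k$, with $\zeta_0=\zeta$ and $\zeta_N$ the constant edge-path at $v_0$ (read along the top and the two sides). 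Passing from $\zeta_{k-1}$ to $\zeta_k$ swaps, inside the edge-path, the $g$-images of two sides of $\D_k$ for the $g$-image of the third; since $g$ is simplicial it carries $\D_k$ onto a simplex of $\D$ of dimension at most $2$, so this swap is a simple equivalence, except when $g$ collapses one side to a point, which is handled by the evident simple equivalences inserting or deleting a constant edge (using $0$- or $1$-simplices of $\D$). Hence $\zeta=\zeta_0\sim\zeta_N\sim(v_0,v_0)$, so $[\zeta]=1$ and $\theta$ is injective.

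The main obstacle is the joint control needed in the injectivity step: arranging $g$ to restrict \emph{exactly} to $p_\zeta$ on the bottom edge and to be constant on the remaining three sides — so that $\zeta_0$ is literally $\zeta$ and $\zeta_N$ is literally the trivial edge-path — which requires the relative form of the simplicial approximation theorem applied after subdividing $Q$ compatibly with $\zeta$, together with a careful verification that every degenerate collapse case above reduces to a finite composition of the four listed simple equivalences. The remaining ingredients (the homotopy inside a single simplex, the shelling order of the triangles of $T$, and the surjectivity argument) are routine.
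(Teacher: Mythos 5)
The paper offers no proof of this lemma --- it is quoted as a classical result from \cite{Spanier} --- and your argument is precisely the standard proof given there (and in other standard references): realize edge-paths as PL paths, use simplicial approximation for surjectivity, and triangulate the homotopy square and sweep across a shelling of its triangles for injectivity. Your outline is correct, and the two points you flag (the relative form of simplicial approximation controlling $g$ on $\partial Q$, and the degenerate collapses reducing to the listed simple equivalences via the convention $[(v,v)]=1$) are exactly the places where the textbook proof also has to work.
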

Let $\D$ be a connected simplicial complex, and let $T$ be a spanning tree in $G(\D)$, the 1-skeleton or the graph of $\D$. Let $G_T$ be the group generated by the edges $(v,v')$ of $\D$ modulo the following relations:
\begin{enumerate}[$(i)$]
\item $(v,v')=1$ if $(v,v')$ is an edge in $T$,

\item $(v_1,v_2)(v_2,v_3)=(v_1,v_3)$ if $v_1,v_2$ and $v_3$ are vertices of a simplex in $\D$. 
\end{enumerate} 
As notation, a typical element in the group $G_T$, represented as $e_1e_2\cdots e_r$ modulo the relations as mentioned in $(i)$ and $(ii)$ for some edges $e_1, e_2,\dots, e_r$, is denoted by $[e_1e_2\cdots e_r]_T$. By convention, for a vertex $v$, we take $[(v,v)]_T =1$. Moreover, given a cycle $C:=C(v_0,v_1,\dots,v_r)$ in $G(\D)$, we denote by $[C]_T$ the element $[(v_0,v_1)\cdots (v_r,v_0)]_T$.

\begin{Theorem}{\rm \cite{Spanier}}\label{edge-graph and edge-group}
Let $\D$ be a connected simplicial complex, and let $T$ be a spanning tree in $G(\D)$, the graph of $\D$. Then the edge-path group $E(\D,v_0)$ is isomorphic to $G_T$.
\end{Theorem}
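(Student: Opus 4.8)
The plan is to prove that the edge-path group $E(\D,v_0)$ is isomorphic to $G_T$ by exhibiting mutually inverse homomorphisms between the two groups. The key bookkeeping device is the spanning tree $T$: for each vertex $v$ of $\D$, there is a unique reduced edge-path $\lambda_v$ in $T$ from $v_0$ to $v$ (with $\lambda_{v_0}$ the trivial path), and these tree-paths will let us convert an arbitrary closed edge-path at $v_0$ into a word in the generators $(v,v')$ and back.

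First I would define a map $\phi: E(\D,v_0)\to G_T$ as follows. Given a closed edge-path $\zeta = e_1e_2\cdots e_r$ at $v_0$, with $e_i = (w_{i-1},w_i)$ and $w_0 = w_r = v_0$, set $\phi([\zeta]) = [(w_0,w_1)(w_1,w_2)\cdots(w_{r-1},w_r)]_T$. The main work here is to check that $\phi$ is well-defined on equivalence classes: it suffices to verify that if $\zeta$ and $\zeta'$ are simply equivalent, then the corresponding words in $G_T$ agree. Each of the four types of simple equivalence in the excerpt just inserts or deletes a segment of the form $(v_1,v_3)$ versus $(v_1,v_2)(v_2,v_3)$ where $v_1v_2v_3$ is a simplex, and this is exactly killed by relation $(ii)$ defining $G_T$; relations $(i)$ are not even needed for well-definedness, only later for surjectivity/injectivity. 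That $\phi$ is a homomorphism is immediate from the definition of the product edge-path and of $\circ$.

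Next I would define a candidate inverse $\psi: G_T \to E(\D,v_0)$ on generators by $\psi((v,v')) = [\lambda_v\, (v,v')\, \lambda_{v'}^{-1}]$, where $\lambda_{v'}^{-1}$ denotes the reverse tree-path, and extend multiplicatively. To see this descends to $G_T$, I must check it respects the two families of relations. For relation $(i)$: if $(v,v')\in T$, then $\lambda_v(v,v')$ is (equivalent to) $\lambda_{v'}$, so $\lambda_v(v,v')\lambda_{v'}^{-1}$ is equivalent to $\lambda_{v'}\lambda_{v'}^{-1}$, which is trivial in $E(\D,v_0)$ — here one needs the small lemma that $[\mu\mu^{-1}] = 1$ for any edge-path $\mu$, which follows by an induction on the length of $\mu$ using type-$(i)$/$(ii)$ simple equivalences with the degenerate simplex. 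For relation $(ii)$: if $v_1v_2v_3$ spans a simplex, then $\psi((v_1,v_2))\circ\psi((v_2,v_3)) = [\lambda_{v_1}(v_1,v_2)\lambda_{v_2}^{-1}\lambda_{v_2}(v_2,v_3)\lambda_{v_3}^{-1}] = [\lambda_{v_1}(v_1,v_2)(v_2,v_3)\lambda_{v_3}^{-1}]$, and a single type-$(iv)$ simple equivalence collapses $(v_1,v_2)(v_2,v_3)$ to $(v_1,v_3)$, giving $\psi((v_1,v_3))$. Finally I would check $\phi\circ\psi = \mathrm{id}$ and $\psi\circ\phi = \mathrm{id}$: the first is a direct computation on generators using that $\lambda_v$ lies in $T$ so maps to $1$ under $\phi$; the second uses that for a closed path $\zeta$ through vertices $w_0,\dots,w_r$, one has $\psi(\phi([\zeta])) = [\lambda_{w_0}(w_0,w_1)\lambda_{w_1}^{-1}\cdots\lambda_{w_{r-1}}(w_{r-1},w_r)\lambda_{w_r}^{-1}]$, and the interior $\lambda_{w_i}^{-1}\lambda_{w_i}$ pairs cancel by the $[\mu\mu^{-1}]=1$ lemma while $\lambda_{w_0}=\lambda_{w_r}$ is trivial, leaving $[\zeta]$.

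The main obstacle I anticipate is not any single deep idea but the careful handling of the well-definedness checks — in particular verifying that the four forms of simple equivalence all map to equal elements of $G_T$ (for $\phi$) and that the degenerate-simplex manipulations needed for the lemma $[\mu\mu^{-1}] = 1$ are legitimate under the given definition of simple equivalence. One must be slightly attentive that reversing an edge-path is compatible with equivalence and that tree-paths concatenate correctly; once these routine points are nailed down, the isomorphism follows formally. (Since this is essentially the classical computation of the edge-path group, I would expect the paper to cite Spanier and keep the argument brief.)
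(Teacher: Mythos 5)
Your proposal is correct and coincides with the paper's treatment: the paper cites Spanier for this classical result without reproving it, but the isomorphism $\alpha([(v_0,v_1)\cdots(v_n,v_0)])=[(v_0,v_1)\cdots(v_n,v_0)]_T$ and its inverse $\alpha^{-1}([(v,v')]_T)=[\zeta_v(v,v')\zeta_{v'}^{-1}]$ displayed immediately after the theorem are exactly your $\phi$ and $\psi$. Your well-definedness checks (simple equivalences absorbed by relation $(ii)$, tree edges killed by relation $(i)$, and the cancellation lemma $[\mu\mu^{-1}]=1$ via degenerate simplices) are the standard and correct way to complete the argument.
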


The isomorphism used in Theorem \ref{edge-graph and edge-group} is the function $\alpha :  E(\D,v_0)\to G_T$ given by $$\alpha([(v_0,v_1)\cdots (v_n,v_0)]) = [(v_0,v_1)\cdots(v_n,v_0)]_T,$$ where the element on the right-hand side represents the equivalence class defined by the operations in $ G_T$. The inverse of this map is defined on the generators of $G$ as follows: for each edge $(v,v')$, if $\z_v$ and $\z_{v'}$ are unique edge-paths from $v_0$ to $v$ and from $v_0$ to $v'$ along $T$, respectively. Then $\alpha^{-1}: G_{T}\to E(\D,v_0)$ is given by $$\alpha^{-1}([(v,v')]_T)=[\z_v (v,v')\z^{-1}_{v'}],$$
where $\z^{-1}_{v'}$ indicates the path $\z_{v'}$ with the opposite direction. 

Let $T'$ be another spanning tree of $G(\D)$. Using similar arguments, the group $G_{T'}$ is isomorphic to the edge-path group $E(\D,v_0)$. If $\beta :  E(\D,v_0)\to G_{T'}$ is an isomorphism, then the composition $\beta\circ\alpha^{-1}: G_T\to G_{T'}$ defines an isomorphism between $G_T$ and $G_{T'}$.

\begin{Lemma}\label{unique cycle Ce is in link of a vertex}
Let $\D$ be a connected simplicial complex, and let $T$ be a spanning tree in $G(\D)$, the graph of $\D$. If $e$ is an edge in $G(\D)\setminus T$ such that the unique cycle $C_e$ in $T\cup\{e\}$ is contained in the link of a vertex, then $[C_e]_T=1$.
\end{Lemma}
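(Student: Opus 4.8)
The plan is to exploit the fact that the link of a vertex $v$ in $\D$ is itself a simplicial complex on which the relations defining $G_T$ act, and that the cycle $C_e$ lying inside $\lk(v,\D)$ can be ``coned off'' using the simplices $v * \tau$, where $\tau$ runs over the edges of $C_e$. Concretely, write $C_e = C(v_0, v_1, \dots, v_r)$ with all $v_j \in \lk(v,\D)$, so that each edge $(v_{j}, v_{j+1})$ together with $v$ spans a triangle $\{v, v_j, v_{j+1}\}$ in $\D$ (indices mod $r{+}1$). By relation $(ii)$ in the definition of $G_T$, for each such triangle we get $(v_j, v_{j+1}) = (v_j, v)(v, v_{j+1})$ inside $G_T$; equivalently $(v_j, v_{j+1}) = (v_j, v)(v_{j+1}, v)^{-1}$.

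First I would form the element $[C_e]_T = [(v_0,v_1)(v_1,v_2)\cdots(v_r,v_0)]_T$ and substitute each factor $(v_j, v_{j+1})$ by $(v_j, v)(v_{j+1}, v)^{-1}$, using the triangle relations just described. The product telescopes: writing $g_j := [(v_j, v)]_T$, the expression becomes $g_0 g_1^{-1} g_1 g_2^{-1} \cdots g_{r-1} g_r^{-1} g_r g_0^{-1}$, which collapses to the identity of $G_T$. Hence $[C_e]_T = 1$.

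The only subtlety — and the step I expect to be the main thing to verify carefully — is the edge case where $v$ is itself one of the vertices $v_j$ on the cycle $C_e$, or more generally where some edge $(v_j, v)$ degenerates (i.e. $v_j = v$), so that the triangle $\{v, v_j, v_{j+1}\}$ is not genuinely $2$-dimensional. If $v$ lies on $C_e$, then by definition of the link $v$ cannot be contained in a face of $\lk(v,\D)$, so $C_e \subseteq \lk(v,\D)$ already forces $v \notin \{v_0, \dots, v_r\}$; this rules the bad case out. Thus every triangle $\{v, v_j, v_{j+1}\}$ is a genuine $2$-simplex of $\D$ (note $v_j \neq v_{j+1}$ since $C_e$ is a cycle, and both differ from $v$), relation $(ii)$ applies to each, and the telescoping argument goes through without obstruction. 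One should also note that the argument does not use anything about $T$ beyond its being a spanning tree and the convention $[(v,v)]_T = 1$ — the conclusion $[C_e]_T = 1$ holds purely from the triangle relations, independent of which edge $e$ was deleted.
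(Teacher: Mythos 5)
Your proof is correct and follows essentially the same route as the paper's: both cone the cycle $C_e$ off through the vertex $v$ using the triangle relation $(v_j,v_{j+1})=(v_j,v)(v,v_{j+1})$ and collapse the resulting product to the identity (the paper absorbs terms successively, you telescope after a global substitution, which is the same computation). Your check that $v$ cannot lie on $C_e$ is a worthwhile extra remark that the paper leaves implicit.
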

\begin{proof}
Let $C_e=C(v_0,\dots,v_r)$, and let $v$ be a vertex in $\D$ such that $C_e\subseteq\lk (v,\D)$. Then, for each $0\leq i\leq r$, the simplex $vv_iv_{i+1}$ is a face of $\D$, where $v_{r+1}=v_0$. Now, consider the edge-path $(v_0,v_1)(v_1,v_2)\cdots (v_r,v_0)$ in the edge-path group $E(\D,v_0)$. We obtain the following edge-path equivalences:
\begin{eqnarray*}
(v_0,v_1)(v_1,v_2)\cdots (v_r,v_0)
 &\sim & (v_0,v)(v,v_1)(v_1,v_2)\cdots (v_r,v_0)\\
 &\sim & (v_0,v)(v,v_0)\\
  &\sim & (v_0,v_0).
\end{eqnarray*}
Therefore, we have $[C_e]_T=[(v_0,v_1)(v_1,v_2)\cdots (v_r,v_0)]_T=[(v_0,v_0)]_T=1$, which completes the proof.
\end{proof}
%
\begin{Lemma}{\rm \cite{S.Klee}}
Let $\D$ be a balanced normal $d$-pseudomanifold, and let $S$ be a two-element subset of $[d]$. If $v_0\in\D_{S}$, then every class in $E(\D,v_0)$ can be represented by a closed edge-path in $\D_{S}$ that contains $v_0$.
\end{Lemma}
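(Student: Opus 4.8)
The plan is to take an arbitrary closed edge-path based at $v_0$ and push it into $\D_S$ by eliminating, one colour at a time, every vertex whose colour lies outside $S$. Write $[d]\setminus S=\{i_1,\dots,i_p\}$, and set $T_0=[d]$ and $T_t=[d]\setminus\{i_1,\dots,i_t\}$ for $1\le t\le p$, so that $\D_{T_0}=\D$ and $T_p=S$; since $\kappa(v_0)\in S\subseteq T_t$, the vertex $v_0$ belongs to every $\D_{T_t}$. I would prove, by induction on $t$, the statement: every class of $E(\D,v_0)$ is represented by a closed edge-path based at $v_0$ that lies in $\D_{T_t}$. For $t=0$ this is just the definition of $E(\D,v_0)$, and for $t=p$ it is the assertion of the lemma.

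For the inductive step, set $i=i_t$ and let $\zeta$ be a closed edge-path based at $v_0$ lying in $\D_{T_{t-1}}$. I would remove the colour-$i$ vertices of $\zeta$ one by one. Fix such a vertex $w$ and an occurrence of it in $\zeta$, that is, a consecutive pair of edges $(u,w)(w,u')$. Every vertex of $\zeta$ is coloured in $T_{t-1}$; since $u$ and $u'$ are joined to $w$ by edges of $\D$, their colours differ from $\kappa(w)=i$, hence lie in $T_{t-1}\setminus\{i\}=T_t$. Now $u,u'$ are vertices of $\lk(w,\D)$, a balanced normal $(d-1)$-pseudomanifold with colour set $[d]\setminus\{i\}\supseteq T_t$; granting the connectivity statement below, the rank-selected subcomplex $\lk(w,\D)_{T_t}$ is connected, so there is a path $u=x_0,x_1,\dots,x_\ell=u'$ in it. Each triple $\{w,x_j,x_{j+1}\}$ is a $2$-simplex of $\D$, and iterated use of the elementary simple equivalences through these triangles (a routine manipulation; cf.\ the proof of Lemma~\ref{unique cycle Ce is in link of a vertex}) yields
$$(u,w)(w,u')\ \sim\ (x_0,x_1)(x_1,x_2)\cdots(x_{\ell-1},x_\ell).$$
Substituting this segment into $\zeta$ produces an equivalent closed edge-path based at $v_0$, still lying in $\D_{T_{t-1}}$, with one fewer occurrence of $w$, and with no new occurrence of $w$ (the $x_j$ lie in $\lk(w,\D)$) and no new colour-$i$ vertex (the $x_j$ are coloured in $T_t$). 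Iterating over the occurrences of $w$, and then over the finitely many colour-$i$ vertices of $\zeta$, gives an equivalent closed edge-path lying in $\D_{T_t}$, which completes the induction.

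It remains to prove the connectivity fact used above: \emph{if $L$ is a balanced normal $k$-pseudomanifold with $k\ge 1$, colour set $C$, and $T\subseteq C$ with $|T|\ge 2$, then $L_T$ is connected.} I would argue by induction on $k$. If $k=1$, then $L$ is an (even) cycle and $|T|\ge 2$ forces $T=C$, so $L_T=L$. If $k\ge 2$, induct downward on $|T|$: for $T=C$ we have $L_T=L$, which is connected; and to pass from $L_{T\cup\{i\}}$ to $L_T$ (with $i\in C\setminus T$), take any two vertices $p,q$ of $L_T$, join them by a path in the connected complex $L_{T\cup\{i\}}$, and replace each colour-$i$ vertex $w$ on that path by a path in $\lk(w,L)_T$ — connected because $\lk(w,L)$ is a balanced normal $(k-1)$-pseudomanifold with colour set $C\setminus\{i\}\supseteq T$, so the induction on $k$ applies. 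As before, this replacement reintroduces neither $w$ nor any colour-$i$ vertex, so the process terminates with a path from $p$ to $q$ in $L_T$.

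The step I expect to be the main obstacle is precisely this connectivity statement for rank-selected subcomplexes: normal pseudomanifolds need not be Cohen--Macaulay, so the classical Stanley--Munkres rank-selection theorem is unavailable, and one is forced to bootstrap the statement through the dimension via vertex links; this is what makes the whole proof a double induction (on $d$, through links, and within each dimension on the number of colours to be removed). The only other point needing care is the bookkeeping already indicated — making sure that each detour through a vertex link neither resurrects the vertex being eliminated nor creates a vertex of a colour already scheduled for removal — so that the elimination process both terminates and never leaves the current rank-selected subcomplex.
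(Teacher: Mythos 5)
The paper does not prove this lemma at all---it is quoted from \cite{S.Klee}---so there is no internal proof to compare against; judged on its own, your argument is correct and is essentially the argument of the cited reference: push a loop at $v_0$ into $\D_S$ one colour at a time by rerouting each offending vertex $w$ through a path in $\lk(w,\D)_{T}$, the elementary equivalences through the $2$-simplices $wx_jx_{j+1}$ giving $(u,w)(w,u')\sim(x_0,x_1)\cdots(x_{\ell-1},x_\ell)$. You are also right to flag the connectivity of rank-selected subcomplexes as the real content: the paper simply asserts at the start of Section~5 that $\D_S$ is connected for $|S|=2$, whereas your double induction (on the dimension via vertex links, and downward on $|T|$) supplies a correct proof of it, using only that links of vertices in a normal pseudomanifold are again normal pseudomanifolds and that a normal $1$-pseudomanifold is a single cycle. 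The remaining bookkeeping (neighbours of $w$ on the path have colours in $T$ by properness, so they lie in $\lk(w,\D)_T$; the detours introduce no new colour-$i$ vertices, so the elimination terminates; $v_0$ is never eliminated since $\kappa(v_0)\in S$) is all handled correctly, so I see no gap.
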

\begin{Lemma}\label{edge generators}{\rm \cite{S.Klee}}
Let $\D$ be a balanced normal $d$-pseudomanifold, and let $S$ be a two-element subset of $[d]$. Let $G(\D)$ denote the graph of $\D$. Then, for a spanning tree $T$ of $G(\D)$, the group $G_T$ is generated by the edges $(v,v')$ such that $vv'\in\D_S$.
\end{Lemma}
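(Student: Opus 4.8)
The plan is to deduce the lemma from the preceding lemma (on representing classes of $E(\D,v_0)$ by closed edge-paths in $\D_S$) together with the isomorphism $E(\D,v_0)\cong G_T$ of Theorem~\ref{edge-graph and edge-group}. Essentially all of the topological content is already contained in the preceding lemma, so what remains is a short translation into the language of the presentation of $G_T$.

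First I would fix the base vertex. Since $\D$ is a balanced normal $d$-pseudomanifold, every facet contains exactly one vertex of each color in $[d]$; in particular both colors in $S$ are realized, so $\D_S\neq\emptyset$ and we may pick a vertex $v_0\in\D_S$. By Theorem~\ref{edge-graph and edge-group} the map
$$\alpha\colon E(\D,v_0)\longrightarrow G_T,\qquad \alpha\big([(v_0,v_1)\cdots(v_n,v_0)]\big)=[(v_0,v_1)\cdots(v_n,v_0)]_T,$$
is an isomorphism, hence surjective.

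Now let $g\in G_T$ be arbitrary. By surjectivity of $\alpha$ and the definition of $E(\D,v_0)$ we have $g=\alpha([\zeta])$ for some closed edge-path $\zeta$ in $\D$ based at $v_0$. Applying the preceding lemma, we may replace $\zeta$ by an equivalent closed edge-path $\zeta'=e_1e_2\cdots e_r$ that lies entirely in $\D_S$ and is based at $v_0$; since $\zeta\sim\zeta'$ they determine the same class, so
$$g=\alpha([\zeta'])=[e_1e_2\cdots e_r]_T=[e_1]_T[e_2]_T\cdots[e_r]_T.$$
Each $e_k$ is an ordered edge $(x_{k-1},x_k)$ whose underlying $1$-simplex $x_{k-1}x_k$ belongs to $\D_S$, hence $e_k$ is one of the generators $(v,v')$ with $vv'\in\D_S$. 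Therefore $g$ lies in the subgroup of $G_T$ generated by $\{(v,v') : vv'\in\D_S\}$; as $g$ was arbitrary, this subgroup is all of $G_T$, proving the claim.

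The one place where real work is hidden is the preceding lemma, whose proof uses the normal-pseudomanifold hypothesis --- specifically the connectivity of links of faces of codimension $\geq 2$, as exploited in Lemma~\ref{unique cycle Ce is in link of a vertex} --- to homotope an arbitrary closed edge-path into the rank-selected subcomplex $\D_S$ one ``detour through a link'' at a time. If one wished to avoid citing that lemma, reproving this link-by-link push-down into $\D_S$ would be the main obstacle; granting it, the statement above is pure bookkeeping with the presentation of $G_T$.
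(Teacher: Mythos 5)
Your argument is correct. Note that the paper itself supplies no proof of this lemma---it is quoted from \cite{S.Klee}---and your deduction (pick $v_0\in\D_S$, use the surjectivity of the isomorphism $\alpha\colon E(\D,v_0)\to G_T$ from Theorem~\ref{edge-graph and edge-group}, push an arbitrary closed edge-path into $\D_S$ via the preceding cited lemma, and read off the resulting word as a product of $\D_S$-edge generators) is exactly the standard bookkeeping by which the result follows in Klee's paper, so there is nothing to fault beyond the fact that the real content still resides in the uncited-in-detail preceding lemma, as you yourself acknowledge.
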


 \section{Balanced normal $d$-pseudomanifolds}
Let $\D$ be a balanced normal $d$-pseudomanifold, and let $S=\{p,q\}$ be a two-element subset of $[d]$. Then the rank-selected subcomplex $\D_S$ is connected, and each vertex in $\D_S$ has a degree of at least two. Define $\Gamma_{S}= \Gamma_{pq}:= f_{\{p,q\}}-f_{\{p\}}-f_{\{q\}}$. 

\begin{Lemma}\label{strong r-connected}
Let $\D$ be a balanced normal $d$-pseudomanifold, and let $S$ be an $r$-element subset of $[d]$, where $r\geq 2$. If $\s$ and $\t$ are two $(r-1)$-simplices in $\D_S$, then there exists a sequence of $r$-simplices $\s_1,\dots,\s_m$ such that $\s=\s_1,\t=\s_m,$ and $\s_i\cap\s_{i+1}$ is a $(r-2)$-simplex in $\D_S$.
\end{Lemma}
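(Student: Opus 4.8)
The plan is to reduce the statement to the strong connectedness of the normal $d$-pseudomanifold $\D$ itself and then transport a path of facets of $\D$ to a path of top-dimensional faces of $\D_S$ by discarding, from each facet, the vertices whose colour lies outside $S$. Note first that, since $|S|=r$, an $(r-1)$-simplex of $\D_S$ is precisely a face of $\D$ whose vertices receive all $r$ colours in $S$; I will call such a face \emph{$S$-full}, and these are exactly the facets of $\D_S$. Also, since $\D$ is a pure $d$-complex and balanced, every facet $F$ of $\D$ contains exactly one vertex of each colour in $[d]$, so it has a unique $S$-full face, namely the face spanned by the vertices of $F$ whose colour lies in $S$.

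\emph{Step 1: $\D$ is strongly connected.} I would first record that the dual graph $\Lambda(\D)$ is connected. This is a standard property of normal pseudomanifolds; for a self-contained argument one inducts on $d$. For $d=1$, $\D$ is a single cycle. For $d\geq 2$, $\D$ is connected because $\lk(\emptyset,\D)=\D$ and $\emptyset$ has codimension $\geq 2$; if $\Lambda(\D)$ had at least two components, then, as $\D$ is connected, some vertex $v$ would lie in facets belonging to two distinct components. But $\lk(v,\D)$ is a balanced normal $(d-1)$-pseudomanifold, hence strongly connected by induction, and two facets of $\lk(v,\D)$ meeting in a $(d-2)$-face, once coned with $v$, give two facets of $\D$ meeting in a $(d-1)$-face; thus all facets of $\D$ through $v$ lie in one component of $\Lambda(\D)$, a contradiction.

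\emph{Step 2: projection.} Given $S$-full faces $\s$ and $\t$, choose facets $F_\s\supseteq\s$ and $F_\t\supseteq\t$ of $\D$, and use Step 1 to pick a path $F_\s=G_1,G_2,\dots,G_k=F_\t$ in $\Lambda(\D)$, so that $G_i\cap G_{i+1}$ is a $(d-1)$-face for each $i$. Let $\s_i$ be the $S$-full face of $G_i$. Then $\s_1=\s$ (since $\s\subseteq F_\s$ is $S$-full and $F_\s$ has one vertex per colour) and $\s_k=\t$. For consecutive $G_i,G_{i+1}$, their common $(d-1)$-face omits exactly one colour $c$. If $c\notin S$, then $G_i$ and $G_{i+1}$ share all their vertices in colours of $S$, so $\s_i=\s_{i+1}$. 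If $c\in S$, then $G_i$ and $G_{i+1}$ agree in every colour of $S\setminus\{c\}$ but differ in colour $c$, so $\s_i\neq\s_{i+1}$ and $\s_i\cap\s_{i+1}$ is precisely the $(r-2)$-face on the colour set $S\setminus\{c\}$, which is a face of $\D$ with colours in $S$, hence lies in $\D_S$. Deleting consecutive repetitions from the list $\s_1,\dots,\s_k$ and re-indexing yields the required sequence $\s=\s_1,\dots,\s_m=\t$ of $(r-1)$-simplices of $\D_S$ with $\s_i\cap\s_{i+1}$ an $(r-2)$-simplex of $\D_S$ (with $m=1$ if $\s=\t$).

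\emph{Main obstacle.} The substantive ingredient is Step 1; once strong connectedness of $\D$ is available, Step 2 is purely colour bookkeeping. The one point to watch is that the case $S=[d]$ of the lemma is itself the statement that $\D$ is strongly connected, so Step 1 must be proved independently (by its own induction on $d$, as sketched) rather than by invoking the lemma.
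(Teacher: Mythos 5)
Your proposal is correct and follows essentially the same route as the paper: take a strongly connected chain of facets of $\D$ joining a facet containing $\s$ to one containing $\t$, restrict each facet to the colours in $S$, and delete repetitions. The only difference is that you supply an inductive proof of strong connectedness of $\D$ (Step 1), which the paper simply takes as given ``by definition'' of a normal pseudomanifold; your Step 2 is the paper's argument verbatim.
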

\begin{proof}
Let $\s'$ and $\t'$ be two $d$-simplices in $\D$ such that $\s\leq\s'$ and $\t\leq\t'$. Then, by definition, there exists a sequence of $d$-simplices $\t_1,\dots,\t_m$ such that $\s'=\t_1$, $\t'=\t_m$, and $\t_i\cap\t_{i+1}$ is a simplex of dimension $d-1$. Therefore, $\t_1\restriction_S=\s$, $\t_m \restriction_S=\t$, and $|\kappa(\t_i\cap\t_{i+1})|\leq d$. Furthermore, $\t_i\cap\t_{i+1}\restriction_S$ is either an $r$-simplex or an $(r-1)$-simplex. 

Let $\s_1=\s=\t_1\restriction_S$ and $\s_2=\t_2\restriction_S$. Since $\t_1\restriction_S\cap \t_2\restriction_S=(\t_1\cap\t_2)\restriction_S$ is a simplex of dimension $r$ or $r-1$, it follows that either $\s_1=\s_2$ or $\s_1\cap\s_2$ is a simplex of dimension $r-1$. Now, apply the same procedure for each $i$, defining $\s_i=\t_i\restriction_S$. Observe that either $\s_i=\s_{i+1}$ or $\s_i\cap\s_{i+1}$ is a simplex of dimension $r-1$. Moreover, in the $m$-th step, we have $\s_m=\t_m\restriction_S=\t$. Thus, we obtain a sequence $\s_{i_1},\dots,\s_{i_k}$ such that $\s=\s_{i_1}$, $\t=\s_{i_k}$, and $\s_{i_j}\cap\s_{i_{j+1}}$ is a simplex of dimension $r-1$ in $\D_S$.
\end{proof}
\begin{Lemma}\label{all edge in one vertex link}
Let $\D$ be a balanced normal $d$-pseudomanifold, and let $S$ be a two-element subset of $[d]$. If there exists a vertex $u$ in $\D$ such that $\D_S\subseteq \lk (u,\D)$, then $m(\D)=0$.
\end{Lemma}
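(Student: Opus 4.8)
The plan is to show that if every edge of $\D_S$ lies in the link of a single vertex $u$, then the edge-path group $E(\D,v_0)$ (and hence $\pi_1(\|\D\|)$) is trivial. The natural strategy combines Lemma~\ref{edge generators} with Lemma~\ref{unique cycle Ce is in link of a vertex}. First I would fix a spanning tree $T$ of $G(\D)$ and pass to the isomorphic presentation $G_T$ of Theorem~\ref{edge-graph and edge-group}. By Lemma~\ref{edge generators}, $G_T$ is generated by the edges $(v,v')$ with $vv'\in\D_S$. For each such edge $e=(v,v')$ not already in $T$, let $C_e$ denote the unique cycle in $T\cup\{e\}$; the element $[e]_T$ is conjugate (via the tree-paths $\z_v,\z_{v'}$) to $[C_e]_T$ in $E(\D,v_0)$, so it suffices to show $[C_e]_T=1$ for every generator $e$ with $vv'\in\D_S$.

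The key point is that $C_e$ need not itself lie in a single vertex link, so Lemma~\ref{unique cycle Ce is in link of a vertex} does not apply directly to $C_e$. Instead I would argue that $[(v,v')]_T=1$ for every edge $vv'\in\D_S$ by exhibiting, for each such edge, an explicit simply-equivalent replacement using the vertex $u$. Concretely, since $vv'\in\D_S\subseteq\lk(u,\D)$, the simplex $uvv'$ is a face of $\D$, so in the edge-path group we have the simple equivalence
$$
(v,v') \;\sim\; (v,u)(u,v').
$$
Thus, working in $G_T$, each generator satisfies $[(v,v')]_T = [(v,u)]_T\cdot[(u,v')]_T$. If $T$ is chosen so that $u$ is joined to $v_0$ within $T$, then it is enough to understand the elements $[(u,w)]_T$ for vertices $w$; I would show these are all equal (to a single element $g=[(u,v_0)]_T$ or its inverse depending on orientation) by using the spanning-tree relation and, where needed, the fact that $u,w,w'$ span a simplex whenever $ww'\in\D_S$. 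Then every generator $[(v,v')]_T$ collapses to $g\cdot g^{-1}=1$, forcing $G_T=1$, i.e. $m(\D)=\operatorname{rank}\pi_1(\|\D\|)=0$.

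I expect the main obstacle to be bookkeeping the orientations and base-point conventions so that the telescoping $[(v,u)]_T[(u,v')]_T$ genuinely cancels — one must verify that $[(u,w)]_T$ is independent of $w$, which ultimately rests on the connectivity of $\D_S$ (Lemma~\ref{strong r-connected} with $r=2$, or the stated fact that $\D_S$ is connected) together with repeated application of the relation $(u,w)(w,w')=(u,w')$ valid because $uww'\in\D$ for each edge $ww'$ of $\D_S$. Once that independence is established, the conclusion is immediate. An alternative, cleaner packaging would be to observe that the surjection from the edge-path group of the cone $u\star\D_S$ onto $E(\D,v_0)$ (valid since every generator of $G_T$ comes from $\D_S\subseteq\lk(u,\D)$, so every generating loop is null-homotopic in the cone on $u$) already kills all generators, but the direct simple-equivalence argument above is self-contained and avoids invoking cone arguments not set up in the excerpt.
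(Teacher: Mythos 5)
Your core mechanism (coning every generator off through $u$ via the simplices $uvv'$) is the right one, but as written the argument has a genuine gap: you never fix the spanning tree correctly, and the step you yourself flag as the ``main obstacle'' is in fact circular for a general $T$. The relation $(u,w)(w,w')=(u,w')$ only yields $[(u,w')]_T=[(u,w)]_T\,[(w,w')]_T$, so to conclude that $[(u,w)]_T$ is independent of $w$ you need $[(w,w')]_T=1$ along the edges you propagate over --- which is precisely the statement being proved, unless those edges lie in $T$. The condition you do impose (``$u$ is joined to $v_0$ within $T$'') is automatic for any spanning tree and does not help; moreover $[(u,v_0)]_T$ need not even be defined, since $uv_0$ need not be an edge of $\D$. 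The missing ingredient is to first take a spanning tree $\bar T$ of the connected graph $\D_S$ and then extend it to a spanning tree $T$ of $G(\D)$. With that choice your propagation closes: $[(u,w)]_T$ is constant along edges of $\bar T$, hence constant on $V(\D_S)$, and every generator $[(v,v')]_T=[(u,v)]_T^{-1}[(u,v')]_T$ collapses to $1$; combined with Lemma~\ref{edge generators} this gives $G_T=1$.

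This is exactly the choice the paper makes: it extends a spanning tree of $\D_S$ and then cones the loop $\z_v(v,v')\z_{v'}^{-1}$ --- which now lies entirely in $\D_S\subseteq\lk(u,\D)$ --- off through $u$ by simple equivalences. Note also that with this choice your stated worry about Lemma~\ref{unique cycle Ce is in link of a vertex} evaporates: for every generator $e\in E(\D_S)\setminus E(T)$ the fundamental cycle $C_e$ is contained in $\D_S\subseteq\lk(u,\D)$, so that lemma applies directly and kills all generators in one stroke. So the fix is small, but without it the proposal does not go through.
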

\begin{proof}
Fix a vertex $v_0$, and consider the edge-path group $E(\D,v_0)$. Let $\bar{T}$ be a spanning tree of $\D_S$, and let $T$ be a spanning tree of $G(\D)$ that extends $\bar{T}$. We will prove that the group $G_T$ is trivial. By Lemma \ref{edge generators}, the group $G_T$ is generated by the edges in $\D_S$. Let $\alpha:E(\D,v_0)\to G_T$ be the isomorphism mentioned earlier, and let $(v,v')$ be an edge in $\D_S$. Then $\alpha^{-1}([(v,v')]_T)=[\z_v(v,v')\z_{v'}^{-1}]$, where $\z_v$ and $\z_{v'}$ are paths in $\bar{T}$ from $v_0$ to $v$ and $v'$, respectively. 

Let $\z_v(v,v')\z_{v'}^{-1}=(v_0,v_1)(v_1,v_2)\dots (v_r,v)(v,v')(v',v_{r+1})\dots (v_{n-1},v_n)(v_n,v_0)$. Since $\D_{S}\subseteq\lk (u,\D)$, we have the following equivalences:
\begin{eqnarray*}
\z_v(v,v')\z_{v'}^{-1}&=&(v_0,v_1)(v_1,v_2)\dots (v_r,v)(v,v')(v',v_{r+1})\dots (v_{n-1},v_n)(v_n,v_0)\\
 &\sim & (v_0,u)(u,v_1)(v_1,v_2)\dots (v_r,v)(v,v')(v',v_{r+1})\dots (v_{n-1},v_n)(v_n,v_0)\\
 &\sim & (v_0,u)(u,v)(v,v')(v',v_{r+1})\dots (v_{n-1},v_n)(v_n,v_0)\\
 &\sim & (v_0,u)(u,v_n)(v_n,v_0)\\
 &\sim & (v_0,u)(u,v_0)\\
 &\sim & (v_0,v_0).
\end{eqnarray*}
Therefore, $\alpha^{-1}([(v,v')]_T)=[(v_0,v_0)]=1$. Since $\alpha$ is an isomorphism, it follows that  $[(v,v')]_T=1$ in $G_T$. Thus, $G_T$ is the trivial group, and this completes the proof.
\end{proof}
  
\begin{Lemma}\label{d-dim manifold with SS=0}
Let $\D$ be a balanced $d$-manifold, and let $S$ be a two-element subset of $[d]$. If $\Gamma_{S} = 0$, then $\|\D\|\cong \mathbb{S}^d$.
\end{Lemma}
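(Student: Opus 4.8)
The plan is to reduce the statement to Lemma~\ref{all edge in one vertex link}: namely, I want to show that $\Gamma_S = 0$ forces $\D_S$ to lie in the link of a single vertex, and then a separate topological argument upgrades $m(\D)=0$ to $\|\D\|\cong\mathbb{S}^d$. First I would analyze the combinatorial meaning of $\Gamma_S = f_{\{p,q\}} - f_{\{p\}} - f_{\{q\}} = 0$ for $S=\{p,q\}$. Since $\D_S$ is a connected graph in which every vertex has degree at least two (as noted just before Lemma~\ref{strong r-connected}), we have $f_{\{p,q\}} \geq f_{\{p\}} + f_{\{q\}}$ always, with equality exactly when $\D_S$ is a single cycle through alternating colors, i.e., every vertex of $\D_S$ has degree \emph{exactly} two. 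So $\Gamma_S = 0$ says $\D_S$ is a bipartite cycle $v_0 v_1 \cdots v_{2k-1} v_0$ with colors alternating between $p$ and $q$.

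Next I would exploit the normal pseudomanifold structure. Using equation~\eqref{3dim rho} (or its $d$-dimensional analog via Lemma~\ref{d dim inequality} and the fact that each $(d-2)$-face of color $[d]\setminus\{p,q\}$ has a link that is a cycle in $\D_S$) I would argue that since $\D_S$ is a \emph{single} cycle, there is exactly one $(d-2)$-simplex $\tau$ of color $[d]\setminus\{p,q\}$, and the edges of $\D_S$ are precisely the edges in $\lk(\tau,\D)$. Actually the cleaner route: fix the color $p$. Each vertex $u$ of color $p$ in $\D$ has a link $\lk(u,\D)$ which is a balanced normal $(d-1)$-pseudomanifold on colors $[d]\setminus\{p\}$; its rank-selected subcomplex on $\{q\}$ together with... hmm, more directly — every $q$-colored vertex of $\D_S$ together with every $p$-colored vertex of $\D_S$ forms an edge-subgraph that is a single cycle. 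I would then show that all of $\D_S$ lies in the star of \emph{any} chosen $(d-2)$-face of the complementary colors: since $\D_S$ has exactly $f_{d-2}^{pq} = 1$ bicolored cycle, there is a unique $(d-2)$-simplex $\tau$ with $\kappa(V(\tau)) = [d]\setminus\{p,q\}$, and $\lk(\tau,\D)$ is exactly this cycle $\D_S$. Picking any vertex $u\in V(\tau)$, we get $\D_S \subseteq \lk(\tau,\D) \subseteq \lk(u,\D)$ — wait, that inclusion needs care, but the point is that $\D_S\subseteq\st(\tau,\D)$ and hence every edge of $\D_S$ together with $u$ spans a simplex, which is exactly the hypothesis of Lemma~\ref{all edge in one vertex link}. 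Hence $m(\D) = 0$, so $\|\D\|$ is a simply connected closed $d$-manifold.

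For the final upgrade from $m(\D)=0$ to $\|\D\|\cong\mathbb{S}^d$, I would distinguish the relevant dimensions $d=3,4$ (these being the cases used in the main theorems; a general statement would need more). For $d=3$: a closed simply connected $3$-manifold is homeomorphic to $\mathbb{S}^3$ by the Poincaré conjecture, and since we are in the PL category in dimension $3$, PL homeomorphism follows. For $d=4$: here one cannot conclude $\mathbb{S}^4$ from $\pi_1 = 0$ alone — one also needs control of $\chi$ or the homology. So in dimension $4$ I expect I would additionally need to extract from $\Gamma_S = 0$ (equivalently, from $\D_S$ being a single cycle and the formula~\eqref{4dim rho2} giving the minimal possible $\rho_\e$) that $\chi(\D) = 2$, and then invoke that a simply connected closed PL $4$-manifold with $\chi = 2$ is PL homeomorphic to $\mathbb{S}^4$ (via Freedman plus the fact that minimal triangulations force the smooth/PL structure, or more elementarily that $b_2 = 0$). \textbf{The main obstacle} is precisely this last step in dimension $4$: translating the single-cycle combinatorics of $\D_S$ into the homological statement $\chi(\D)=2$ (so that $b_2=0$ and the manifold is a homotopy $4$-sphere), and then citing the correct PL-classification input. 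The combinatorial core — that $\Gamma_S=0$ means $\D_S$ is one alternating cycle contained in a single vertex star — should be routine given Lemmas~\ref{strong r-connected} and~\ref{all edge in one vertex link}.
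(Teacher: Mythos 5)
Your combinatorial opening is sound and matches the paper's: $\Gamma_S=0$ together with the minimum-degree-two observation forces $\D_S$ to be a single alternating cycle, and from there every $(d-2)$-simplex $\sigma$ with $\kappa(V(\sigma))=[d]\setminus S$ has $\lk(\sigma,\D)=\D_S$ (its link is a cycle inside the cycle $\D_S$). One small correction: such a $\sigma$ is \emph{not} unique --- in the octahedral sphere $\p(a_0b_0)\star\cdots\star\p(a_db_d)$ with $S=\{d-1,d\}$ there are $2^{d-1}$ of them, all with link $\D_S$. This does not hurt you, since any one of them yields a vertex $u$ with $\D_S\subseteq\lk(\sigma,\D)\subseteq\lk(u,\D)$, so Lemma~\ref{all edge in one vertex link} does give $m(\D)=0$.

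The genuine gap is the final step: $m(\D)=0$ only says $\|\D\|$ is simply connected, and your plan to upgrade this to $\|\D\|\cong\mathbb{S}^d$ invokes Poincar\'e for $d=3$, needs Freedman plus an unproven computation of $\chi(\D)=2$ for $d=4$ (which you yourself flag as the main obstacle and do not carry out), and gives nothing for $d\geq 5$, even though the lemma is stated for all $d$. The paper avoids all of this with a purely structural argument you stopped just short of: since \emph{every} $(d-2)$-simplex $\sigma$ of color $[d]\setminus S$ satisfies $\lk(\sigma,\D)=\D_S$, dually every edge $e$ of $\D_S$ satisfies $\D_{[d]\setminus S}\subseteq\lk(e,\D)\subseteq\D_{[d]\setminus S}$, so $\lk(e,\D)=\D_{[d]\setminus S}$ is a $(d-2)$-sphere (being an edge link in a manifold) and $\D=\D_{[d]\setminus S}\star\D_S$. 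A join of a $(d-2)$-sphere with a circle is a $d$-sphere, which finishes the proof in every dimension with no input from geometric topology. You should replace the fundamental-group detour with this join decomposition; the facts you already established (all complementary $(d-2)$-links equal $\D_S$) are exactly what is needed.
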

\begin{proof}
Note that the degree of each vertex in $\D_S$ is at least two. Since $\Gamma_{S} = 0$, $\D_S$ is a cycle. Without loss of generality, let $S=\{d-1,d\}$, and consider the subcomplex $\D_{[d-2]}$. Let $\s$ be a $(d-2)$-simplex in $\D_{[d-2]}$. Then, $\lk (\s,\D)$ is a cycle contained in $\D_S$. Since $\D_S$ itself is a cycle, it follows that $\lk (\s,\D) = \D_S$ for every $(d-2)$-simplex $\s$ in $\D_{[d-2]}$. Thus, if $e$ is an edge in $\D_S$, then $\D_{[d-2]}\subseteq\lk (e,\D)\subseteq \D_{[d-2]}$, implying that $\lk (e,\D)=\D_{[d-2]}$ for every edge $e\in  \D_S$. Hence, we conclude that $\D_{[d-2]}$ is a balanced $(d-2)$-sphere, and $\D= \D_{[d-2]}\star \D_S$. This completes the proof.
\end{proof}

\begin{Lemma}\label{d-dim npm with degree of (d-3) simplex geq 3}
Let $\D$ be a balanced normal $d$-pseudomanifold, and let $S$ be a two-element subset of $[d]$ such that $\Gamma_{S}\geq 1$. Then, there exists a $(d-3)$-simplex $\s$ in $\D_{[d]\setminus S}$ such that $\deg(\s,\D_{[d]\setminus S})\geq 3$.
\end{Lemma}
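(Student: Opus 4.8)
The plan is to argue by contradiction. Suppose every $(d-3)$-simplex $\s$ of $\D_{[d]\setminus S}$ satisfies $\deg(\s,\D_{[d]\setminus S})\le 2$; I will deduce that $\D_S$ is a single cycle, whence $\Gamma_S=0$, contradicting $\Gamma_S\ge 1$. \emph{Step 1 (structure of links).} Fix such a $\s$. Its $d-2$ vertices carry all colors of $[d]\setminus S$ except one, say $c$, so $\lk (\s,\D)$ is a balanced normal $2$-pseudomanifold, i.e.\ a closed surface, with color set $S\cup\{c\}$, and $\deg(\s,\D_{[d]\setminus S})$ equals the number of color-$c$ vertices of $\lk (\s,\D)$. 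A balanced closed surface has at least two vertices of each color (a single vertex of some color would make the surface equal to that vertex's closed star, a disk), so $\deg(\s,\D_{[d]\setminus S})\ge 2$, hence $=2$ by hypothesis; write $a_\s,b_\s$ for the two color-$c$ vertices. The closed disks $\st (a_\s,\lk (\s,\D))$ and $\st (b_\s,\lk (\s,\D))$ cover $\lk (\s,\D)$, and since no face contains two color-$c$ vertices their intersection equals $\lk (a_\s,\lk (\s,\D))\cap \lk (b_\s,\lk (\s,\D))$; because the union is a \emph{closed} surface this common part must be the entire boundary circle of each disk, forcing $\lk (a_\s,\lk (\s,\D))=\lk (b_\s,\lk (\s,\D))=:Z_\s$, a bi-colored cycle with colors $S$. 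Thus $\lk (\s,\D)=\partial(a_\s b_\s)\star Z_\s$, so in particular $\lk (a_\s\s,\D)=\lk (b_\s\s,\D)=Z_\s\subseteq\D_S$; the same reasoning shows $\D_{[d]\setminus S}$ is pure of dimension $d-2$.

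\emph{Step 2 (all the cycles agree).} Call $(d-3)$-simplices $\s,\nu$ of $\D_{[d]\setminus S}$ equivalent when $Z_\s=Z_\nu$. Let $\mu$ be a facet of $\D_{[d]\setminus S}$ and $a$ its color-$c$ vertex; with $\s=\mu\setminus\{a\}$, Step 1 gives $\lk (\mu,\D)=Z_\s$. For any other vertex $x\in V(\mu)$, of color $r$ say, put $\nu=\mu\setminus\{x\}$: here $x$ is one of the two color-$r$ vertices of $\lk (\nu,\D)$, so Step 1 applied to $\nu$ gives $\lk (\mu,\D)=\lk (x\nu,\D)=Z_\nu$. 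Hence $Z_\s=Z_\nu$, so all $(d-3)$-faces of $\mu$ are equivalent. By Lemma~\ref{strong r-connected} (applicable since $|[d]\setminus S|=d-1\ge 2$) the complex $\D_{[d]\setminus S}$ is strongly connected, so equivalence propagates along a connecting chain of facets; as $\D_{[d]\setminus S}$ is also pure, every $(d-3)$-simplex of $\D_{[d]\setminus S}$ lies in some facet, and we conclude that all of them are equivalent. Let $Z$ be the common cycle.

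\emph{Step 3 (finishing).} Every edge $e$ of $\D_S$ lies in a facet $F$ of $\D$; deleting the two endpoints of $e$ from $F$ produces a $(d-2)$-simplex $\mu$ with $\kappa (V(\mu))=[d]\setminus S$, a facet of $\D_{[d]\setminus S}$, and $e\subseteq\lk (\mu,\D)=Z$. So every edge — and hence every vertex — of $\D_S$ lies in $Z$, and since $Z\subseteq\D_S$ we get $\D_S=Z$, a single cycle. Then $f_{\{p,q\}}=f_{\{p\}}+f_{\{q\}}$, i.e.\ $\Gamma_S=0$, the desired contradiction.

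The main obstacle is Step 1: one must verify carefully that a balanced closed surface with exactly two vertices of one color is a suspension of a bi-colored cycle (equivalently, the join of that cycle with a $0$-sphere). Step 2's vertex-swap is also a point to handle with care, since it is what guarantees that the cycle $Z_\s$ does not change as one moves between the $(d-3)$-faces of a facet.
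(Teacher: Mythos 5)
Your proof is correct, and it reaches the same contradiction as the paper (that $\D_S$ collapses to a single bi-colored cycle, forcing $\Gamma_S=0$), but by a noticeably different mechanism. The paper's proof works globally: under the degree-$2$ hypothesis it first shows that $\D_{[d]\setminus S}$ is itself a normal $(d-2)$-pseudomanifold (using Lemma \ref{strong r-connected} for strong connectivity and the normality of links for the link condition), then observes that $\lk (e,\D)$ is a $(d-2)$-dimensional normal pseudomanifold with $\kappa(\lk (e,\D))=[d]\setminus S$ and hence equals $\D_{[d]\setminus S}$ for every edge $e\in\D_S$, and finally concludes that $\D_S$ sits inside the circle $\lk (\mu,\D)$ for a facet $\mu$ of $\D_{[d]\setminus S}$. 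You instead argue locally: you identify $\lk (\s,\D)$ for each $(d-3)$-simplex $\s$ as the suspension $\p(a_\s b_\s)\star Z_\s$ of a bi-colored cycle (your Step 1 is sound --- the ``two closed disks'' claim follows cleanly from the fact that every edge of $\lk (a_\s,\lk (\s,\D))$ lies in exactly two triangles of the closed surface $\lk (\s,\D)$, the second of which must contain $b_\s$), and then propagate the identity $Z_\s=Z_\nu$ across the $(d-3)$-faces of a common facet and along chains furnished by Lemma \ref{strong r-connected}. Your route is more elementary in that it avoids having to argue that a full-dimensional normal-pseudomanifold subcomplex of $\D_{[d]\setminus S}$ must be the whole thing, at the cost of the explicit vertex-swap bookkeeping in Step 2; the paper's route is shorter but leans on that containment-forces-equality fact implicitly. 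Both arguments are valid.
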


\begin{proof}
If possible, assume that the degree of every $(d-3)$-simplex in $\D_{[d]\setminus S}$ is 2, i.e., each $(d-3)$-simplex is contained in exactly two facets of  $\D_{[d]\setminus S}$. We will prove that $\D_{[d]\setminus S}$  is a normal $(d-2)$-pseudomanifold for every $d\geq 3$. If $d=3$, then $\D_{[d]\setminus S}$ is a circle, and hence, we are done. 

Now, assume that $d\geq 4$. It follows from Lemma \ref{strong r-connected} that for any two $(d-2)$-simplices  $\s_1$ and $\s_2$ in $\D_{[d]\setminus S}$, there exists a sequence of $(d-2)$-simplices $\t_1,\dots,\t_s$ such that $\s_1=\t_1,\s_2=\t_s,$ and $\t_i\cap\t_{i+1}$ is a $(d-3)$-simplex in $\D_{[d]\setminus S}$. To show that $\D_{[d]\setminus S}$ is a normal $(d-2)$-pseudomanifold, we need to prove that the link of every simplex of codimension two or more in $\D_{[d]\setminus S}$ is connected.

Let $\t$ be a simplex of codimension two or more in $\D_{[d]\setminus S}$. Then $\t$ has codimension four or more in $\D$. Since $\lk (\t,\D)$ is a balanced normal pseudomanifold, $\lk (\t,\D)_{[d]\setminus S}$ is connected. Therefore, the link of every simplex of codimension two or more in  $\D_{[d]\setminus S}$ is connected. Hence,  $\D_{[d]\setminus S}$  is a normal $(d-2)$-pseudomanifold for every $d\geq 3$.

Let $e$ be an edge in $\D_S$. Then $\lk (e,\D)$ is a normal pseudomanifold of dimension $d-2$. Moreover, $\kappa (\lk (e,\D)) = [d]\setminus S$, which implies that $\lk (e,\D)=\D_{[d]\setminus S}$ for every edge $e$ in $\D_S$. Since the link of every maximal simplex of $\D_{[d]\setminus S}$ in $\D$ is a circle, it follows that $\D_S$ is a circle. This implies that $\Gamma_{S}=0$, leading to a contradiction. Thus, there exists a $(d-3)$-simplex $\s$ in $\D_{[d]\setminus S}$ such that $\deg(\s,\D_{[d]\setminus S})\geq 3$.
\end{proof}

\begin{definition}
{\rm Let $G$ be a graph, and let $C\subseteq G$ be a cycle. We say that $C$ is  an {\em almost-induced cycle} in $G$ if $C$ contains a vertex, say $c$, such that $\deg(c,G)>2$, while every other vertex $v\in V(C)\setminus\{c\}$ satisfies $\deg(v,G)=2$.}
\end{definition}
\begin{Lemma}\label{no almost-induced}
Let $\D$ be a balanced triangulation of a normal $d$-pseudomanifold, and let $S$ be a two-element subset of $[d]$. Then, there does not exist an almost-induced cycle $C$ in $\D_{S}$. 
\end{Lemma}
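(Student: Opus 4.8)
Suppose, for contradiction, that $C$ is an almost-induced cycle in $\D_S$, with distinguished vertex $c$ of degree $>2$ and all other vertices of $C$ of degree exactly $2$ in $\D_S$. Write $S=\{p,q\}$ and list $C = C(c, v_1, v_2, \dots, v_r)$ with the $v_i$'s alternating in color between $p$ and $q$; in particular $c$ has one neighbor of color $p$ and one of color $q$ on $C$, and $\deg(c,\D_S)>2$ forces $c$ to have a further neighbor in $\D_S$ outside $C$. The plan is to show that the vertices $v_1,\dots,v_r$ together with their cofaces force a piece of $\D$ that looks like $\overline{C}\star(\text{link})$ and pinches off, contradicting either the pseudomanifold condition on $\D$ or the connectedness of some link. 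The key structural input is that in a balanced normal $d$-pseudomanifold the link of an edge $e\in\D_S$ is a balanced normal $(d-2)$-pseudomanifold on the color set $[d]\setminus S$, and dually the link of a $(d-2)$-simplex $\tau\in\D_{[d]\setminus S}$ is a cycle in $\D_S$ (this is exactly the mechanism used in Lemmas~\ref{d-dim manifold with SS=0} and \ref{d-dim npm with degree of (d-3) simplex geq 3}).

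First I would analyze the consecutive edges of $C$ at a degree-$2$ vertex. If $v_i$ ($1\le i\le r$) has degree $2$ in $\D_S$, its two $\D_S$-neighbors are its predecessor and successor on $C$; I want to propagate this. Consider the edge $e=\{v_{i},v_{i+1}\}\in\D_S$ and its link $\lk(e,\D)$, a normal $(d-2)$-pseudomanifold with colors $[d]\setminus S$. For any $(d-3)$-simplex $\sigma$ in $\lk(e,\D)$, the link $\lk(\sigma,\D)$ is a $1$-dimensional normal pseudomanifold, i.e.\ a cycle, living in $\D_S$ and passing through both $v_i$ and $v_{i+1}$; since $v_i$ and $v_{i+1}$ have degree $2$ in $\D_S$, this cycle is forced to re-use the $C$-edges at $v_i$ and $v_{i+1}$, so the cycle $\lk(\sigma,\D)$ threads along $C$. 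Iterating this around the whole arc $v_1,\dots,v_r$ of degree-$2$ vertices, every such forced cycle must, upon leaving $v_r$ (resp.\ $v_1$), land at $c$. This is the step I expect to be the main obstacle: making the propagation argument airtight — that the "degree $2$" constraint at each interior vertex genuinely forces the whole one-dimensional link to be swallowed by $C$, and that when the arc terminates at $c$ the only way to close up is through $c$ itself — requires carefully tracking the colors $p,q$ and invoking normality of the various links at each step rather than a single clean citation.

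Once the propagation is established, I would derive the contradiction as follows. It shows that there is a $(d-2)$-simplex $\tau\in\D_{[d]\setminus S}$ (namely $\sigma$ together with appropriate vertices, or rather: the relevant coface structure forces $\lk(\tau,\D)=C$ for $\tau$ ranging over a nonempty family) whose link in $\D$ is exactly the cycle $C$; but then the edge $e\in\D_S$ lying on $C$ away from $c$ has $\lk(e,\D)$ equal to a full rank-selected subcomplex $\D_{[d]\setminus S}$-type object, forcing in turn (exactly as in the last paragraph of the proof of Lemma~\ref{d-dim npm with degree of (d-3) simplex geq 3}) that $\D_S$ is itself a single cycle with no vertex of degree $>2$ — contradicting $\deg(c,\D_S)>2$. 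Equivalently, one can phrase the end-game via Lemma~\ref{all edge in one vertex link}/Lemma~\ref{unique cycle Ce is in link of a vertex}: the extra edge at $c$ would create a cycle $C_e$ in $T\cup\{e\}$ contained in a single vertex link, forcing a collapse incompatible with $\Gamma_S\geq 1$. Either route closes the argument, and I would write up whichever keeps the color bookkeeping lightest.
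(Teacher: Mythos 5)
Your propagation step is essentially sound and is a mild variant of what the paper does. (Minor bookkeeping: the faces whose links are the bicolored cycles in $\D_S$ are the $(d-2)$-simplices of colors $[d]\setminus S$, not $(d-3)$-simplices; with that correction, the claim that any such cycle passing through one of the degree-two vertices $v_i$ is forced to run along $C$ and hence equal $C$ is correct and needs no delicate color tracking.) The paper propagates instead along the vertices of $C$: since $\deg(c_1,\D_S)=2$, the link of $c_1$ splits as $\p(cx)\star\lk (cc_1,\D)$, giving $\lk (c_1x,\D)=\lk (cc_1,\D)$, and iterating shows every edge of $C$ has the same link. The two propagations are interchangeable.

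The genuine gap is the endgame: neither of the two routes you propose actually yields the contradiction. First, the step ``$\lk (e,\D)$ equals a full $\D_{[d]\setminus S}$-type object, forcing $\D_S$ to be a single cycle'' does not follow. The last paragraph of Lemma \ref{d-dim npm with degree of (d-3) simplex geq 3} obtains $\lk (e,\D)=\D_{[d]\setminus S}$ only under the standing hypothesis there that $\D_{[d]\setminus S}$ is itself a normal $(d-2)$-pseudomanifold, and it needs this for \emph{every} edge $e\in\D_S$; here you know only that certain $(d-2)$-simplices have link equal to $C$, which gives no global control on $\D_{[d]\setminus S}$. Second, the fundamental-group route via Lemmas \ref{unique cycle Ce is in link of a vertex} and \ref{all edge in one vertex link} cannot work: a cycle of $\D_S$ lying in a single vertex link merely kills a generator of $G_T$, and there is no incompatibility between $\Gamma_S\geq 1$ and such collapses (a sphere can have $\Gamma_S$ arbitrarily large while every generator dies). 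What actually closes the argument --- and what the paper does --- is local at $c$: the propagation gives $\lk (cc_1,\D)=\lk (cc_2,\D)=:L$ for the two $C$-neighbors $c_1,c_2$ of $c$, hence $\p(c_1c_2)\star L\subseteq\lk (c,\D)$; since $\p(c_1c_2)\star L$ is a closed pseudomanifold of full dimension inside the normal $(d-1)$-pseudomanifold $\lk (c,\D)$, the two coincide, so $c_1$ and $c_2$ are the only $\D_S$-neighbors of $c$, contradicting $\deg(c,\D_S)>2$. You would need to supply this (or an equivalent) final step.
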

\begin{proof}
If possible, let $C$ be an almost-induced cycle in $\D_{S}$. Let $c$ be the vertex in $C$ with $\deg(c,\D_{S}) >2$, and let $c_1$ and $c_2$ be the vertices in $C$ that are adjacent to $c$. Since $\lk (c_1,\D)$ is a balanced normal $(d-1)$-pseudomanifold and $\deg(c_1,\D_S)=2$, we have $\lk (c_1,\D)=\p(cx)\star\lk (cc_1,\D)$, where $x$ is a vertex in $C$ adjacent to $c_1$ in $\D_{S}$. Therefore, $\lk (c_1x,\D)=\lk (cc_1,\D)$. Using the fact that $\deg(v,\D_{S})=2$ for every vertex $v\in C\setminus\{c\}$, we conclude that $\lk (yz,\D)=\lk (cc_1,\D)$ for every edge $yz\in C$. 

Since $\lk (cc_1,\D)=\lk (cc_2,\D)$, we have $\p(c_1c_2)\star\lk (cc_1,\D)\subseteq \lk (c,\D)$.
Furthermore, since $\lk (c,\D)$ is a balanced normal $(d-1)$-pseudomanifold, its structure implies that $\lk (c,\D)=\p(c_1c_2)\star\lk (cc_1,\D)$. This contradicts the fact that $\deg(c,\D_S)>2$. Therefore, $\D_{S}$ does not contain any almost-induced cycles.
\end{proof}

\begin{Lemma}\label{d-dim npm with SS=1}
Let $\D$ be a balanced normal $d$-pseudomanifold, and let $S$ be a two-element subset of $[d]$ such that $\Gamma_{S} = 1$. Then, we have $m(\D)=0$.
\end{Lemma}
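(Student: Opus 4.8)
The plan is to compute $\pi_1(\|\D\|)$ through the edge-path group of Section~4, after first determining the combinatorial shape of the rank-selected graph $\D_S$. Write $S=\{p,q\}$. Since $\D_S$ is a connected simplicial graph in which every vertex has degree at least two, and since $\Gamma_S=f_{\{p,q\}}-f_{\{p\}}-f_{\{q\}}$ equals $\#\{\text{edges of }\D_S\}-\#\{\text{vertices of }\D_S\}$, the hypothesis $\Gamma_S=1$ says $\D_S$ has first Betti number $2$. Contracting its degree-$2$ vertices produces a multigraph with all degrees $\geq 3$ and first Betti number $2$; the inequality $2E'=2V'+2\geq 3V'$ forces $V'\leq 2$, so the contracted graph is either one vertex carrying two loops, or two vertices joined by an edge and carrying a loop apiece, or two vertices joined by three parallel edges. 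In each of the first two cases $\D_S$ would contain a cycle through its unique vertex of degree $>2$ all of whose other vertices have degree $2$ --- an almost-induced cycle --- which is impossible by Lemma~\ref{no almost-induced}. Hence $\D_S$ is a subdivided theta graph: two vertices $u,w$ joined by three internally disjoint paths $P_1,P_2,P_3$, whose only cycles are $P_1\cup P_2$, $P_1\cup P_3$ and $P_2\cup P_3$.

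Next I would realise cycles of $\D_S$ inside vertex links of $\D$. Since $\Gamma_S\geq 1$, Lemma~\ref{d-dim npm with degree of (d-3) simplex geq 3} supplies a $(d-3)$-simplex $\s\in\D_{[d]\setminus S}$ with $\deg(\s,\D_{[d]\setminus S})\geq 3$. Put $\{p,q,r\}:=[d]\setminus\kappa(V(\s))$ (this set contains $S$) and $L:=\lk (\s,\D)$; then $L$ is a balanced triangulated closed surface with colour set $\{p,q,r\}$, and $\deg(\s,\D_{[d]\setminus S})$ counts precisely the $r$-coloured vertices of $L$, so $L$ has at least three of them. For each $r$-coloured vertex $v$ of $L$ the cycle $\lk (v\s,\D)=\lk (v,L)$ has colour set $S$, hence is a subcomplex of $\D_S$ that is a cycle, so it is one of the three cycles above; and $\lk (v\s,\D)\subseteq\lk (v,\D)$. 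These cycles are not all equal: if two $r$-coloured vertices $v,v'$ of $L$ had $\lk (v,L)=\lk (v',L)=:C$, then every edge $e$ of the $\{p,q\}$-cycle $C$ would lie only in the triangles $v\star e$ and $v'\star e$ of $L$, forcing $L=v\star C\cup v'\star C$, a $2$-sphere whose only $r$-coloured vertices are $v$ and $v'$ --- contradicting that $L$ has at least three. Hence at least two distinct cycles of $\D_S$ occur as $\lk (v\s,\D)$, each lying in the link of a vertex of $\D$.

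Now I would run the edge-path group argument. The two distinct cycles obtained, say $C'$ and $C''$, are two of $P_1\cup P_2$, $P_1\cup P_3$, $P_2\cup P_3$, hence share exactly one of the paths, say $Q$, with $C'=Q\cup Q'$ and $C''=Q\cup Q''$ for the other two paths $Q',Q''$. Choose edges $e'\in Q'$ and $e''\in Q''$ and set $\bar T:=\D_S\setminus\{e',e''\}$; this is a spanning tree of $\D_S$ whose fundamental cycle of $e'$ is $C'$ and of $e''$ is $C''$. Extend $\bar T$ to a spanning tree $T$ of $G(\D)$; then $T\cap\D_S=\bar T$, so the fundamental cycles of $e',e''$ in $T$ are again $C',C''$, which lie in vertex links, whence $[C']_T=[C'']_T=1$ by Lemma~\ref{unique cycle Ce is in link of a vertex}. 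By Lemma~\ref{edge generators} the group $G_T$ is generated by the edges of $\D_S$; those in $\bar T$ are trivial, and since $[C']_T$ and $[C'']_T$ are, up to inversion, the generators attached to $e'$ and $e''$, these are trivial too. Therefore $G_T$ is trivial, and by Theorem~\ref{edge-graph and edge-group} so is $E(\D,v_0)\cong\pi_1(\|\D\|)$; that is, $m(\D)=0$.

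I expect the main obstacle to be the middle step: identifying $\D_S$ as a subdivided theta graph --- which needs the exclusion of the ``figure-eight'' and ``dumbbell'' shapes via Lemma~\ref{no almost-induced} --- and, more delicately, showing that the (at least) three $r$-coloured vertices of $L$ realise at least two (in fact all three) distinct cycles of $\D_S$, which rests on the suspension argument ruling out coincident vertex links on a closed surface. Once the two fundamental cycles of a suitable spanning tree of $\D_S$ are seen to lie in vertex links of $\D$, the passage to triviality of $\pi_1$ is routine given Lemmas~\ref{edge generators} and~\ref{unique cycle Ce is in link of a vertex} and Theorem~\ref{edge-graph and edge-group}.
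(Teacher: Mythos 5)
Your argument is correct, and its skeleton coincides with the paper's: identify $\D_S$ as a theta graph using Lemma~\ref{no almost-induced}, then kill the two independent fundamental cycles of a spanning tree of $\D_S$ by placing them in vertex links and invoking Lemma~\ref{unique cycle Ce is in link of a vertex} together with Lemma~\ref{edge generators}. The one genuinely different ingredient is how you get the two cycles into vertex links. The paper stays at the level of $(d-2)$-simplices of $\D_{[d]\setminus S}$: since $\D_S$ is not a single cycle, two such facets have different links, and Lemma~\ref{strong r-connected} lets one choose them adjacent, so a common vertex $x$ has \emph{both} cycles $C_1,C_2$ (with $C_1\cup C_2=\D_S$) inside $\lk(x,\D)$. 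You instead invoke Lemma~\ref{d-dim npm with degree of (d-3) simplex geq 3} to find a $(d-3)$-simplex $\s$ whose link is a closed surface with at least three $r$-coloured vertices, and rule out coincident vertex links there by a suspension argument; this lands the two cycles in links of possibly different vertices of $\D$, which is still enough because Lemma~\ref{unique cycle Ce is in link of a vertex} only requires each fundamental cycle to lie in \emph{some} vertex link. Your route is slightly heavier (it needs the surface structure of $\lk(\s,\D)$ and the strong-connectivity step implicit in ``$L=v\star C\cup v'\star C$''), but it is also more explicit in places where the paper is terse: in particular, you carefully exclude the figure-eight and dumbbell shapes for $\D_S$, whereas the paper only asserts the degree sequence $(3,3,2,\dots,2)$ and the theta shape. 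Both proofs conclude identically by counting (or directly trivialising) the generators of $G_T$.
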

\begin{proof}
Let $S=\{p,q\}$. Here, $f_{\{p,q\}}=f_{\{p\}}+f_{\{q\}}+1$, and by construction, the degree of each vertex in $\D_{S}$ is at least 2. The relation $\sum_{v\in V(\D_S)} \deg(v,\D_{S})= 2|E(\D_S)|$, where $E(\D_S)$ denotes the set of edges in $\D_S$, implies that the graph $\D_{S}$ contains at least one vertex with degree greater than 2. Since $\D_{S}$ does not contain an almost-induced cycle (see Lemma \ref{no almost-induced}), the only possible degree sequence for these graphs is $(3,3,2,2,2,\dots, 2)$. Furthermore, the graph is a cycle with a diagonal path joining two of its vertices. Therefore, at least two $(d-2)$-simplices in $\D_{[d]\setminus S}$ have different links. By applying Lemma \ref{strong r-connected}, we can select two distinct $(d-2)$-simplices, $\s$ and $\t$, in $\D_{[d]\setminus S}$ such that $\s\cap\t\neq\emptyset$ and $\lk (\s,\D)\neq\lk (\t,\D)$. Let $x\in V(\s\cap\t)$. Then, $\lk (\s,\D)\cup\lk (\t,\D)\subseteq\lk (x,\D)$. Consequently, $\lk (x,\D)$ contains two different cycles, say $C_1$ and $C_2$, where $C_1\cup C_2=\D_S$.

Let $e_1\in E(C_1)\setminus E(C_2)$ and $e_2\in E(C_2)\setminus E(C_1)$ be two edges, and consider the paths $T_1=C_1\setminus\{e_1\}$ and $T_2=C_2\setminus\{e_2\}$. Define $T'=T_1\cup T_2$, which forms a spanning tree of $\D_S$. Now, extend the spanning tree $T'$ to a spanning tree $T$ of $G(\D)$. 

Since $C_1$ and $C_2$ are the unique cycles in $T\cup \{e_1\}$ and $T\cup \{e_2\}$ containing $e_1$ and $e_2$, respectively, and there exists a vertex $x$ such that $C_1\cup C_2\subseteq \lk (x,\D)$, we apply Lemma \ref{unique cycle Ce is in link of a vertex} to obtain $[e_i]_T=[C_i]_T=1$ in $G_T$ for $i\in \{1,2\}$. Consequently, there exist two edges, $e_1$ and $e_2$, in $E(\D_{S})\setminus E(T)$ such that $[e_i]_T=1$ in $G_T$. Hence, the number of generators of the group $G_T$ is at most $f_{\{p,q\}}-f_{\{p\}}-f_{\{q\}}+1-2=0$, i.e., $m(\D)=0$. 
\end{proof}

\begin{Lemma}\label{d-dim manifold with SS=1}
Let $\D$ be a balanced $d$-manifold. If $S$ is a two-element subset of $[d]$ such that $\Gamma_{S}=1$, then $\|\D\|\cong \mathbb{S}^d$.
\end{Lemma}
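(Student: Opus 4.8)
For $d=3$ the claim is immediate: Lemma~\ref{d-dim npm with SS=1} gives $m(\D)=0$, and a closed simply connected $3$-manifold is $\mathbb{S}^3$ by the Poincar\'e conjecture. The point of what follows is a combinatorial argument that also settles $d=4$, which is what Theorem~\ref{main2} needs. By the structural analysis in the proofs of Lemma~\ref{no almost-induced} and Lemma~\ref{d-dim npm with SS=1}, the hypothesis $\Gamma_S=1$ forces $\D_S$ to be a \emph{theta graph}: it has exactly two vertices $x,y$ of degree $3$, every other vertex has degree $2$, and $\D_S=P_1\cup P_2\cup P_3$ for three internally disjoint $x$--$y$ paths. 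Propagating the identity $\lk (c_1,\D)=\partial(cc')\star\lk (cc_1,\D)$ along the degree-$2$ interior vertices of each $P_i$ (exactly as in the proof of Lemma~\ref{no almost-induced}) shows that $\lk (e,\D)$ is one and the same subcomplex $L_i$ for every edge $e$ of $P_i$; since $\D$ is a PL $d$-manifold and $e$ is a $1$-simplex, $L_i$ is a PL $(d-2)$-sphere contained in $\D_{[d]\bs S}$.

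Every facet of $\D$ uses all $d+1$ colours and hence contains a unique edge of $\D_S$; grouping facets according to the path carrying that edge gives $\D=A_1\cup A_2\cup A_3$, where $A_i:=P_i\star L_i$. As $P_i$ is a PL arc and $L_i$ a PL $(d-2)$-sphere, each $A_i$ is a PL $d$-ball with $\partial A_i=\{x,y\}\star L_i$. Because colours in $S$ and in $[d]\bs S$ are disjoint, every simplex of $\D$ decomposes uniquely as $\s\star\t$ with $\s$ on $S$ and $\t$ on $[d]\bs S$, so (using that the paths are internally disjoint, whence $P_i\cap P_j=\{x,y\}$) we get $A_i\cap A_j=(P_i\cap P_j)\star(L_i\cap L_j)=\{x,y\}\star X_{ij}$ with $X_{ij}:=L_i\cap L_j$. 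A check of links identifies $X_{ij}$ as the subcomplex whose facets are exactly the $(d-2)$-faces of $\D_{[d]\bs S}$ with link $P_i\cup P_j$ in $\D$; in particular $X_{12}\cap X_{13}=X_{12}\cap X_{23}=X_{13}\cap X_{23}=:B$ and $L_k=X_{ik}\cup_B X_{jk}$ for $\{i,j,k\}=\{1,2,3\}$.

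The heart of the matter is the claim that each $X_{ij}$ is a PL $(d-2)$-ball with boundary the PL $(d-3)$-sphere $B$. I would obtain this from the vertex link $\lk (x,\D)$: the $\D_S$-neighbours $w_1,w_2,w_3$ of $x$ (one on each $P_i$) are the only vertices of their colour in $\lk (x,\D)$, and $\lk (w_i,\lk (x,\D))=L_i$, so $\lk (x,\D)=(w_1\star L_1)\cup(w_2\star L_2)\cup(w_3\star L_3)$ is a closed PL $(d-1)$-manifold built from three $(d-1)$-balls $w_i\star L_i$ meeting pairwise in $X_{ij}$ and triply in $B$. Inclusion--exclusion together with $\chi(L_k)=\chi(\mathbb{S}^{d-2})$ and $\chi(\lk (x,\D))=\chi(\mathbb{S}^{d-1})$ forces $\chi(X_{ij})=1$ and $\chi(B)=\chi(\mathbb{S}^{d-3})$; combined with the connectedness of $X_{ij}$ and of $B$, this identifies $X_{ij}$ as a single $(d-2)$-ball and $B$ as the $(d-3)$-sphere bounding it — transparent for $d\le 4$, where $L_k$ is a circle (resp.\ a $2$-sphere) and a connected sub-arc (resp.\ connected planar subsurface with $\chi=1$) is automatically a ball. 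Granting the claim, $\{x,y\}\star X_{ij}=\Sigma X_{ij}$ is a PL $(d-1)$-ball lying inside both $\partial A_i$ and $\partial A_j$; gluing $A_2$ to $A_3$ along $\{x,y\}\star X_{23}$ yields a PL $d$-ball with boundary $\{x,y\}\star L_1=\partial A_1$, and since $A_1\cap(A_2\cup A_3)=\{x,y\}\star(X_{12}\cup X_{13})=\{x,y\}\star L_1=\partial A_1$ as well, $\|\D\|=A_1\cup(A_2\cup A_3)$ is a union of two PL $d$-balls along their common boundary sphere. Hence $\|\D\|\cong\mathbb{S}^d$.

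The one genuinely delicate ingredient is verifying that the regions $X_{ij}$ cut out of the spheres $L_k$ are balls — equivalently, that $B$ is a connected, unknotted $(d-3)$-sphere. This is precisely where the $d$-\emph{manifold} hypothesis (rather than merely ``normal pseudomanifold'') is used: in the dimensions needed here the Euler-characteristic bookkeeping above suffices once connectedness of $X_{ij}$ is established (which I would deduce from the strong connectedness of rank-selected subcomplexes, cf.\ Lemma~\ref{strong r-connected}, by rerouting facet-paths to avoid $B$), and in general one would proceed by induction on the dimension.
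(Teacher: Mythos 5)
Your set-up (the theta-graph structure of $\D_S$, the constancy of $\lk(e,\D)$ along each path $P_i$, and a decomposition of $\D$ into joins of paths with $(d-2)$-spheres) matches the paper's starting point, but your decomposition $\D=A_1\cup A_2\cup A_3$ with $A_i=P_i\star L_i$ takes on a burden the paper's proof deliberately avoids. The paper peels off $\st(a,\D)$ and $\st(b,\D)$ and attaches the middle pieces $P_i'\star L_i$ one at a time, always gluing along a cone $a_i\star\lk(aa_i,\D)$ or $b_i\star\lk(aa_i,\D)$; these gluing loci are cones over the $(d-2)$-spheres $L_i$ and hence are PL $(d-1)$-balls for free, so the intersections $L_i\cap L_j$ never need to be analyzed. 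Your version glues along $\{x,y\}\star X_{ij}$ and therefore must prove that $X_{ij}=L_i\cap L_j$ is a PL $(d-2)$-ball with boundary a single $(d-3)$-sphere $B$ --- and this, which you yourself flag as the delicate ingredient, is exactly where the argument has a genuine gap.

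The Euler-characteristic bookkeeping only yields $\chi(X_{ij})=1$ and $\chi(B)=\chi(\mathbb{S}^{d-3})$; even for $d=4$ this does not exclude $X_{ij}$ being, say, a disk together with an annulus component ($\chi=1$, disconnected), nor $B$ being a disjoint union of several circles with total $\chi=0$, and in either case the gluing of $A_2$ to $A_3$ along $\{x,y\}\star X_{23}$ is no longer a union of two balls along a ball. The connectedness you need is asserted via ``rerouting facet-paths to avoid $B$,'' but Lemma \ref{strong r-connected} gives strong connectedness of $\D_{[d]\setminus S}$, not of the subcomplex of $(d-2)$-faces whose link is the fixed cycle $P_i\cup P_j$; no actual rerouting argument is supplied. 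For $d\geq 5$ the claim is deferred entirely to an unspecified induction, so the lemma as stated is not proved in general. (Your $d=3$ reduction via Lemma \ref{d-dim npm with SS=1} and the Poincar\'e conjecture is logically valid but invokes far heavier machinery than the paper's elementary gluing.) The fix is either to supply the missing proof that $X_{ij}$ is a ball, or --- more economically --- to switch to the paper's attachment scheme, where every gluing locus is a cone over a sphere and the issue disappears.
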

\begin{proof}
By the same argument as in Lemma \ref{d-dim npm with SS=1}, the graph $\D_S$ is a cycle with a diagonal path joining two of its vertices. Let $a$ and $b$ be two vertices in $\D_{S}$ of degree 3. Then there exist three distinct paths of length at least two in $\D_{S}$ between $a$ and $b$, say $P_1,P_2,$ and $P_3$, such that any two paths intersect only at the vertices $a$ and $b$. Let us take the paths as $P_1:=P(a,a_1,\dots,b_1,b)$, $P_2:=P(a,a_2,\dots,b_2,b)$, and $P_3:=P(a,a_3,\dots,b_3,b)$. Note that if the length of some paths $P_j$ is two, then $a_j=b_j$ in such cases. We establish that $\cup_{i=1}^{3} P_i\star\lk (aa_i,\D)$  is a $d$-dimensional ball with its boundary coned off, proving that $\D$ is a triangulation of the $d$-dimensional sphere.

 All three vertices $a_1,a_2,$ and $a_3$ are adjacent to $a$ in $\D_S$ and have the same color. Therefore, $\lk (aa_1,\D)$, $\lk (aa_2,\D)$, and $\lk (aa_3,\D)$ are three distinct spheres in $\D_{[d]\setminus S}$, and hence $\st (a,\D)=\cup_{i=1}^{3} aa_i\star\lk (aa_i,\D)$. Moreover, $\st (a,\D)$ is a $d$-dimensional ball with boundary $\cup_{i=1}^{3} a_i\star\lk (aa_i,\D)$. Since $\deg(x,\D_{S})=2$ for every vertex $x$ in $P_1\setminus\{a,b\}$, we have $\lk (aa_1,\D)=\lk (e_1,\D)$ for every edge $e_1$ in $P_1$. Similarly, $\lk (e_2,\D)=\lk (aa_2,\D)$ and $\lk (e_3,\D)=\lk (aa_3,\D)$ for every edge $e_2$ and $e_3$ in $P_2$ and $P_3$, respectively. This further implies that, $\D_{[d]\setminus S}=\cup_{i=1}^3 \lk (aa_i,\D)$, because if $\t$ is a simplex in $\D_{[d]\setminus S}\setminus\cup_{i=1}^3 \lk (aa_i,\D)$, then $\lk (\t,\D)$ contains an edge $e$ from $\D_S$, which contradicts the fact that $\lk (e,\D) = \lk (aa_j,\D)$ for some $j\in\{1,2,3\}$.

Let us take the path $P'_1:=P(a_1,\dots,b_1)$. If $a_1=b_1$, then $P'_1\star \lk (aa_1,\D)$ is the $(d-1)$-ball $a_1\star \lk (aa_1,\D)$. If $a_1\neq b_1$, $P'_1\star \lk (aa_1,\D)$ is a $d$-dimensional ball in $\D$ with boundary $\p(a_1b_1)\star\lk (aa_1,\D)$, and it is attached to $\st (a,\D)$ in $\D$ along the $(d-1)$-ball $a_1\star \lk (aa_1,\D)$. Therefore, $\st (a,\D)\cup P'_1\star \lk (aa_1,\D)$ is a $d$-dimensional ball with boundary $b_1\star\lk (aa_1,\D)\cup_{i=2}^3 a_i\star \lk (aa_i,\D)$. 

Now, consider the path  $P'_2:=P(a_2,\dots,b_2)$. If $a_2=b_2$, then $P'_2\star \lk (aa_2,\D)$ is the $(d-1)$-ball $a_2\star \lk (aa_2,\D)$. If $a_2\neq b_2$, $P'_2\star \lk (aa_2,\D)$ is a $d$-dimensional ball in $\D$ with boundary $\p(a_2b_2)\star\lk (aa_2,\D)$, and it is attached to $\st (a,\D)\cup P'_1\star \lk (aa_1,\D)$ in $\D$ along the $(d-1)$-ball $a_2\star \lk (aa_2,\D)$. Therefore, $\st (a,\D)\cup_{i=1}^{2} P'_i\star \lk (aa_i,\D)$ is a $d$-dimensional ball with boundary $\cup_{i=1}^{2} b_i\star\lk (aa_i,\D)\cup a_3\star\lk (aa_3,\D)$.

Finally, consider the path $P'_3:=P(a_3,\dots,b_3)$. If $a_3=b_3$, then $P'_3\star \lk (aa_3,\D)$ is the $(d-1)$-ball $a_3\star \lk (a_3,\D)$. If $a_3\neq b_3$, then $P'_3\star \lk (aa_3,\D)$ is a $d$-dimensional ball in $\D$ with boundary $\p(a_3b_3)\star\lk (aa_3,\D)$, and it is attached to $\st (a,\D)\cup_{i=1}^{2} P'_i\star \lk (aa_i,\D)$ in $\D$ along the $(d-1)$-ball $a_3\star \lk (aa_3,\D)$. Therefore, $\st (a,\D)\cup_{i=1}^{3} P'_i\star \lk (aa_i,\D)$, which is the same as $\cup_{i=1}^{3} (P_i-b)\star\lk (aa_i,\D)$, is a $d$-dimensional ball with boundary $\cup_{i=1}^{3} b_i\star\lk (aa_i,\D)$. To complete the proof, it suffices to establish that the boundary of the $d$-dimensional ball $\st (b,\D)$ is $\cup_{i=1}^{3} b_i\star\lk (aa_i,\D)$.

Since $b_1, b_2$, and $b_3$ are three vertices in $\D_S$ adjacent to $b$, they have the same color. Moreover, $bb_i$ is an edge in $P_i$ implies that $\lk (bb_i,\D)=\lk (aa_i,\D)$ for each $1\leq i\leq 3$. Therefore, from $\deg(b,\D_S)=3$ we have $\st (b,\D)=\cup_{i=1}^{3} bb_i\star\lk (aa_i,\D)$, and $\lk (b,\D)=\cup_{i=1}^{3} b_i\star\lk (aa_i,\D)$. Hence, $\st (b,\D)$ is a $d$-dimensional ball with the boundary $\cup_{i=1}^{3} b_i\star\lk (aa_i,\D)$. This completes the proof.
%
\end{proof}
\begin{Theorem}\label{d-dim balanced NPM with SS geq 2}
Let $\D$ be a balanced normal $d$-pseudomanifold. If $S$ is a two-element subset of $[d]$ such that $\Gamma_{S}\geq 2$, then $\Gamma_{S}\geq m(\D)+2$.
\end{Theorem}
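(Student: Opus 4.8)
The goal is to show that if $\Gamma_S \geq 2$ for a two-element color set $S=\{p,q\}$, then in fact $\Gamma_S \geq m(\D)+2$; equivalently, the rank of the fundamental group is at most $\Gamma_S - 2$. By Lemma \ref{edge generators}, if $T$ is a spanning tree of $G(\D)$ obtained by extending a spanning tree $\bar T$ of $\D_S$, then $G_T$ is generated by the edges of $\D_S$ not lying in $T$. The number of such generators is exactly $|E(\D_S)| - |V(\D_S)| + 1$. Since every vertex of $\D_S$ has degree at least $2$, one computes $|E(\D_S)| \geq |V(\D_S)|$, but more precisely the excess $|E(\D_S)| - |V(\D_S)|$ is controlled by $\Gamma_S$: indeed $\Gamma_S = f_{\{p,q\}} - f_{\{p\}} - f_{\{q\}} = |E(\D_S)| - |V(\D_S)|$, so the first Betti number of the graph $\D_S$ is precisely $\Gamma_S + 1$. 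Hence $G_T$ has a presentation with $\Gamma_S + 1$ generators, and it suffices to exhibit $3$ relations among these generators that are independent enough to kill $3$ of them — or, more robustly, to find $3$ of the fundamental cycles of $\D_S$ (relative to $\bar T$) that are trivial in $G_T$, so that the rank drops from $\Gamma_S + 1$ to at most $\Gamma_S - 2$.

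**Key steps.** First I would analyze the structure of the graph $\D_S$: it is a connected graph with minimum degree $\geq 2$ and cyclomatic number $\Gamma_S + 1 \geq 3$, and by Lemma \ref{no almost-induced} it has no almost-induced cycle. I expect to use Lemma \ref{d-dim npm with degree of (d-3) simplex geq 3} to locate a $(d-3)$-simplex $\s$ in $\D_{[d]\setminus S}$ with $\deg(\s, \D_{[d]\setminus S}) \geq 3$; then $\lk(\s, \D)$ is a balanced normal $2$-pseudomanifold lying inside $\D_S$ whose vertex links (in $\D$) give several distinct cycles through a common vertex, exactly as in the proof of Lemma \ref{d-dim npm with SS=1}. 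The second step is to choose, for a suitable vertex $x$ of $\D_S$, several $(d-2)$-simplices of $\D_{[d]\setminus S}$ sharing $x$ but with pairwise distinct links — using Lemma \ref{strong r-connected} for connectivity — so that $\lk(x, \D)$ contains several cycles $C_1, \dots, C_k$ with $\bigcup_i C_i \subseteq \D_S$ and $k$ large enough. The third step is the same spanning-tree trick as in Lemma \ref{d-dim npm with SS=1}: arrange $\bar T$ to contain a spanning tree of each $C_i$ so that each $C_i$ becomes the fundamental cycle of a single non-tree edge $e_i$, and invoke Lemma \ref{unique cycle Ce is in link of a vertex} to conclude $[e_i]_T = [C_i]_T = 1$. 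Producing three independent such edges forces the generator count of $G_T$ down by $3$, giving $m(\D) \leq (\Gamma_S + 1) - 3 = \Gamma_S - 2$.

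**The main obstacle.** The delicate point is the bookkeeping that guarantees \emph{three} genuinely independent trivial generators — i.e. that the edges $e_1, e_2, e_3$ can be chosen outside a common spanning forest so that killing all three really drops the rank by $3$ rather than by fewer. When $\Gamma_S = 2$ this is tight: the graph $\D_S$ has cyclomatic number $3$, and one must show it cannot look like (say) a ``theta-plus-loop'' configuration that yields only two link-supported cycles. Here the no-almost-induced-cycle constraint (Lemma \ref{no almost-induced}) and a careful degree-sequence analysis of $\D_S$ — paralleling the $(3,3,2,\dots,2)$ analysis in Lemma \ref{d-dim npm with SS=1} but now for cyclomatic number $3$, so the relevant degree patterns are $(3,3,3,3,2,\dots)$, $(4,3,3,2,\dots)$, $(4,4,2,\dots)$, or $(5,3,2,\dots)$ — should pin down enough structure (a high-degree vertex, or a pair of degree-$3$ vertices joined by enough internally-disjoint paths) to extract the three link-supported cycles through a single vertex. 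Verifying that each of these degree configurations actually produces three independent trivial loop-generators, and that higher $\Gamma_S$ only makes the situation easier, is where the real work lies; the topological ingredients (Lemmas \ref{strong r-connected}, \ref{no almost-induced}, \ref{d-dim npm with degree of (d-3) simplex geq 3}, \ref{unique cycle Ce is in link of a vertex}) are all already in place.
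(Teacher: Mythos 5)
Your outline follows the same strategy as the paper's proof: extend a spanning tree $\bar T$ of $\D_S$ to a spanning tree $T$ of $G(\D)$, note that $G_T$ is generated by the $\Gamma_S+1$ non-tree edges of $\D_S$, and use Lemma \ref{unique cycle Ce is in link of a vertex} to trivialize three of them, which is exactly what the paper does. You also correctly locate the difficulty. The problem is that the mechanism you propose for the hard case --- choosing a single vertex $x$ so that $\lk (x,\D)$ contains enough cycles of $\D_S$ --- provably cannot work there, and you leave that case unresolved. Suppose every $(d-3)$-simplex of $\D_{[d]\setminus S}$ has degree exactly $3$ in $\D_{[d]\setminus S}$ (Case 2 of the paper's proof; this does occur). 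Fix such a $\s$ and set $L:=\lk (\s,\D)$, a normal $2$-pseudomanifold with colors $S\cup\{r\}$ having exactly three vertices of color $r$. When $L$ is a $2$-sphere (the typical situation, and the only one for manifolds with $d\geq 4$), a flag count gives $f_2(L)=2f_{\{p,q\}}(L)$, $f_1(L)=3f_{\{p,q\}}(L)$, hence $\chi(L)=2$ forces $f_{\{p,q\}}(L)-f_{\{p\}}(L)-f_{\{q\}}(L)=1$, i.e.\ the subgraph $L_S\subseteq \D_S$ has cyclomatic number exactly $2$. So the three cycles visible from $\s$ (or from any single vertex of $\s$) span only a rank-$2$ subgraph of $\D_S$: no choice of spanning tree can make more than two non-tree edges have their fundamental cycles inside $L_S$. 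Your degree-sequence heuristic for $\D_S$ does not escape this, because the obstruction lives in $\D_{[d]\setminus S}$, not in the degree pattern of $\D_S$.

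This is precisely why the paper's argument has a second, substantially harder case: when the maximal degree is $3$ it must leave $\lk (\s,\D)$ and produce a \emph{different} vertex $u$ together with a cycle $\tilde{C}_3'\subseteq\lk (u,\D)$ not contained in $L_S$, chosen so that $\tilde{C}_3'$ and one of the original cycles $\tilde{C}_2$ lie in the common link $\lk (u,\D)$; this overlap is what lets a single spanning tree realize $\tilde{C}_1,\tilde{C}_2,\tilde{C}_3'$ simultaneously as link-supported fundamental cycles. Finding such a $u$ uses Lemma \ref{strong r-connected} to walk along a chain of $(d-3)$-simplices until the link first acquires a new cycle (for $d\geq4$), plus a separate ad hoc argument for $d=3$ about pairs of degree-$3$ vertices of $\D_{[3]\setminus S}$ whose links share exactly one cycle. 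None of this appears in your proposal, and the passing mention of ``a pair of degree-$3$ vertices joined by enough internally-disjoint paths'' does not supply it. In short: the skeleton and the supporting lemmas are right, but the core of the theorem --- manufacturing a third independent trivial generator when no single link contains three independent cycles --- is missing, and the specific plan you state for that step fails.
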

\begin{proof}
Let $S=\{p,q\}$. By Lemma \ref{d-dim npm with degree of (d-3) simplex geq 3}, $\D_{[d]\setminus S}$ contains a $(d-3)$-simplex, say $\t$, such that $\deg(\t,\Delta[d]\setminus S)\geq 3$. Choose a $(d - 3)$-simplex $\sigma$ in $\Delta[d]\setminus S$ with the largest $\deg(\sigma,\Delta[d]\setminus S)$. Hence, it suffices to build a spanning tree $T$ of $G(\D)$, and find three edges $e_i$, $1\leq i\leq 3$, in $E(\D_S)\setminus E(T)$ such that $e_i\in \lk(\sigma, \Delta)$ and $[e_i]_T = 1 $ in $G_T$, proving that a generating set of $G_T$ contains at most $f_{\{p,q\}}-f_{\{p\}}-f_{\{q\}}+1-3$, i.e., $\Gamma_S- 2$ elements.  These will imply that the rank of the group $G_T$ is at most $\Gamma_S- 2$, implying that $\Gamma_S\geq m(\D)+ 2$. We will split into a few cases. 

\vspace{.25cm}
\noindent\textbf{Case 1.}
Let $\deg(\s,\D_{[d]\setminus S})\geq 4$. Then $\lk (\s,\D)$ contains at least four different cycles from the graph $\D_{S}$, noting that the graph $\lk (\s,\D)_{S}$ is connected.

Let $C_1$ and $C_2$ be two cycles in $\lk (\s,\D)_{S}$ such that $C_1\cap C_2\neq \emptyset$. Let $e_1\in E(C_1)\setminus E(C_2)$ and $e_2\in E(C_2)\setminus E(C_1)$ be two edges, and consider the paths $T_1=C_1\setminus\{e_1\}$ and $T_2=C_2\setminus\{e_2\}$. Define $T'=T_1\cup T_2$. 

\vspace{.25cm}
\noindent\textbf{Case 1a.}
Suppose that $T'$ is not a spanning tree of $C_1\cup C_2$, i.e., $T'$ contains some cycle. Then each cycle in $T'$ contains edges from both $C_1$ and $C_2$. We proceed by deleting an edge from each cycle appearing in $T'$ that also belongs to $C_2$. This process results in a subgraph of $T'$ that forms a spanning tree of $C_1 \cup C_2$. Let $e_k$ be the last edge deleted from $T'\cap C_2$ to obtain a spanning tree $T''$ of $C_1\cup C_2$. Define $C'_i$ as the unique cycles in $T''\cup\{e_i\}$ containing $e_i,$ where $i\in\{1,2,k\}$. Then, $C'_i\subseteq C_1\cup C_2\subseteq \lk (\s,\D)_{S}$. Finally, we extend the spanning tree $T''$ to a spanning tree $T$ of $G(\D)$. 

For $i\in\{1,2,k\}$, $C'_i$ is the unique cycle in $T\cup\{e_i\}$ containing $e_i$, and $C'_i\subseteq C_1\cup C_2\subseteq \lk (\s,\D)$.  Therefore, from Lemma \ref{unique cycle Ce is in link of a vertex}, we have $[e_i]_T=[C_i]_T=1$ in $G_T$ for $i\in\{1,2,k\}$. Consequently, there exist three edges, $e_1, e_2,$ and $e_k$, in $E(\D_{S})\setminus E(T)$ such that $[e_i]_T=1$ in $G_T$.

\vspace{.25cm}
\noindent\textbf{Case 1b.}
Assume that $T'$ is a spanning tree of $C_1\cup C_2$. Then $T'\cup\{e_1,e_2\}$, which is the same as $C_1\cup C_2$, contains at most three distinct cycles. Since $\deg(\s,\D_{[d]\setminus S})\geq 4$, there exists a cycle $C_3\subseteq \lk (\s,\D)_{S}$ such that $C_3\cap(C_1\cup C_2)\neq\emptyset$ and $E(C_3)\setminus E(C_1\cup C_2)\neq\emptyset$. Let $e_3\in E(C_3)\setminus E(C_1\cup C_2)$. Consider the path $T_3=C_3\setminus\{e_3\}$, and then define $T^{1}= T'\cup T_3$.


If $T^{1}$ is a spanning tree of $C_1\cup C_2\cup C_3$, then extending $T^1$ to a spanning tree of $G$ will serve our purpose. If $T^1$ is not a spanning tree of $C_1 \cup C_2 \cup C_3$, then each cycle appearing in $T^1$ contains edges from both $T'$ and $C_3$. We proceed by deleting an edge from each cycle appearing in $T^1$ that also belongs to $C_3$, and create a new graph $T'''$, which forms a spanning tree of $C_1\cup C_2\cup C_3$. Let $e_r$ be the last edge deleted from $T^1\cap C_3$ to obtain a spanning tree $T'''$ of $C_1\cup C_2\cup C_3$. Define $C'_i$ as the unique cycle in $T'''\cup\{e_i\}$, where $i\in\{1,2,r\}$. Then $C'_i\subseteq C_1\cup C_2\cup C_3\subseteq \lk (\s,\D)_{S}$. 

Therefore, there exists a spanning tree $T'''$ of $C_1\cup C_2\cup C_3$ such that the difference between the number of edges in $C_1\cup C_2\cup C_3$ and the number of edges in $T'''$ is at least three. Let us extend the spanning tree $T'''$ to a spanning tree $T$ of $G(\D)$. Since $C'_i$ is the unique cycle in $T\cup\{e_i\}$ containing $e_i,$ where $i\in\{1,2,r\}$, and $C'_i\subseteq C_1\cup C_2\subseteq \lk (\s,\D)$, using Lemma \ref{unique cycle Ce is in link of a vertex}, we have $[e_i]_T=[C_i]_T=1$ in $G_T$ for $i\in\{1,2,r\}$. Thus, we have three edges $e_1, e_2$, and $e_r$ such that $[e_i]_T=1$ in $G_T$. 

\vspace{.25cm}
\noindent\textbf{Case 2.}
Assume that $\deg(\s,\D_{[d]\setminus S})= 3$. Then $\lk (\s,\D)_S$ consists of exactly three different cycles, say $\Tilde{C}_1,\Tilde{C}_2,$ and $\Tilde{C}_3$. Since $\Gamma_{S}\geq 2$, the number of different cycles in $\D_S$ is greater than 4. Therefore, there exists a $(d-3)$-simplex $\eta$ in $\D_{[d]\setminus S}$ such that $\lk (\eta,\D)\setminus\lk (\s,\D)\neq\emptyset$.

\vspace{.25cm}
\noindent\textbf{Case 2a.} Let $d\geq 4$. By Lemma \ref{strong r-connected}, there exists a sequence of $(d-3)$-simplices $\s_1,\dots, \s_m$ such that $\s_1=\s$, $\s_m=\eta$ and $\s_i\cap\s_{i+1}$ is a simplex of dimension $(d-4)$. Considering the order of occurrences as indexed, let $\s_j$ be the first $(d-3)$ simplex in the sequence $\s_1,\dots, \s_m$ such that $\lk (\s_i,\D)\setminus\lk (\s,\D) = \emptyset$ for $1\leq i\leq j-1$ and $\lk (\s_j,\D)\setminus\lk (\s,\D)\neq\emptyset$. Without loss of generality, let $\Tilde{C}_2\subseteq \lk (\s_{j-1},\D)$, and consider the cycle $\Tilde{C}'_3\subseteq\lk (\s_j,\D)$ such that $(\lk (\s_j,\D)\setminus\lk (\s,\D))\cap \Tilde{C}'_3\neq\emptyset$ and $\Tilde{C}_2\cap \Tilde{C}'_3\neq \emptyset$. Then, there exists a vertex $u$ in $\s_j\cap\s_{j-1}$ such that  both $\Tilde{C}_2$ and $\Tilde{C}'_3$ are subcomplexes of $\lk (u,\D)$.

\vspace{.25cm}
\noindent\textbf{Case 2b.} Let $d=3$. If $a$ is a vertex in $\D_{[3]\setminus S}$ with degree three, then $\lk (a,\D)$ contains three different cycles in the graph $\D_{S}$. If all degree-three vertices in $\D_{S}$ share the same set of such cycles, then this leads to a contradiction, as one can find an edge in  $\D_{S}$ with an empty link. Note that if $P(a_0,\dots,a_n)$ is a path in $\D_{[3]\setminus S}$ such that $\deg(a_i,\D_{[3]\setminus S})=2$ for each $1\leq i\leq n-1$, then $\cap_{i=0}^{n} \lk (a_i,\D)$ contains a cycle from $\D_{S}$. On the other hand, if a pair of degree-three vertices in $\D_{[3]\setminus S}$ share at least two different cycles from $\D_S$ then they share all three in their corresponding link.  Since every pair of degree-three vertices in $\D_{[3]\setminus S}$ is connected by a path, there exists a pair of degree-three vertices, say $v$ and $u$,  in  $\D_{[3]\setminus S}$ such that $\lk (v,\D)$ and $\lk (u,\D)$ share exactly one common cycle from $\D_{S}$ (since if they share two, they must share all three). Let $\Tilde{C}_i\subseteq\lk (v,\D)$ and $\Tilde{C}'_i\subseteq\lk (u,\D)$ for $1\leq i\leq 3$, where $\Tilde{C}_2=\Tilde{C}'_2$.  

\vspace{.2cm}

Now we arrive at the final conclusion for both Cases 2a and 2b. Recall that  $\Tilde{C}_1,\Tilde{C}_2,$ and $\Tilde{C}_3$ are cycles in $\lk (\s,\D)$, and in both cases we found a vertex $u$ and a cycle $\Tilde{C}'_3$ such that $\Tilde{C}_2$ and $\Tilde{C}'_3$ are subcomplexes of $\lk (u,\D)$.

Let $e_1\in E(\Tilde{C}_1)\setminus E(\Tilde{C}_2)$ and $e_2\in E(\Tilde{C}_2)\setminus E(\Tilde{C}_1)$ be two edges. Consider the paths $T_1=\Tilde{C}_1\setminus\{e_1\}$ and $T_2=\Tilde{C}_2\setminus\{e_2\}$. Let $T^2=T_1\cup T_2$. Then, $T^2$ is a spanning tree of $\Tilde{C}_1\cup \Tilde{C}_2$. Similarly, let $e_3\in E(\Tilde{C}'_3)\setminus E(\Tilde{C}_1\cup \Tilde{C}_2)$ and consider the path $T_3=\Tilde{C}'_3\setminus\{e_3\}$. Let $T^3=T_2\cup T_3$. Then, $T^{3}$ is a spanning tree of $\Tilde{C}'_3\cup \Tilde{C}_2$. Define $\Tilde{T}':=T^2\cup T^3$. We will prove that $\Tilde{T}'$ is a spanning tree of $\Tilde{C}':=\Tilde{C}_1\cup \Tilde{C}_2\cup \Tilde{C}_3\cup \Tilde{C}'_3$. It is evident that $V(T')=V(\Tilde{C}')$. We now need to prove that $\Tilde{T}'$ is acyclic. 

If possible, suppose that $\Tilde{T}'$ contains a cycle, say $C$. Since both $T^2$ and $T^3$ are trees, $E(C)\cap E(T^2)\neq\emptyset$ and $E(C) \cap E(T^3)\neq \emptyset$. Moreover, the cycle $C$ contains edges from both $\Tilde{C}_1$ and $\Tilde{C}'_3$, and these edges from different cycles meet at some vertices of $\Tilde{C}_2$. Without loss of generality, let $u'$ and $v'$ be two such vertices in $\Tilde{C}_1\cap \Tilde{C}_2$ such that the path $\Tilde{C}_1[\{u',\dots v'\}]\subseteq C\cap T^2$ and $\Tilde{C}_1[\{u',\dots v'\}]\cap \Tilde{C}_2 =\{u',v'\}$. Therefore, we obtain a cycle $C'$ defined as $C':=(\Tilde{C}_2[\{u',\dots, v'\}]\cap T^2)\cup \Tilde{C}_1[\{u',\dots v'\}]$, which is embedded in $T^2$, contradicting the fact that $T^2$ is a tree. Hence, $\Tilde{T}'$ is a tree. 

Now, extend $\Tilde{T}'$ to a spanning tree $T$ of $G(\D)$, and consider the group $G_T$. Note that $T^2\cup\{e_i\}\subseteq \lk (\s,\D)$ for $i=1,2,$ and $T^3\cup\{e_3\}\subseteq \lk (u,\D)$. Therefore, by Lemma \ref{unique cycle Ce is in link of a vertex}, we have $[e_i]_T=1$ in $G_T$ for $1\leq i\leq 3$. 
Thus, there exists a spanning tree $T$ of $G(\D)$ and three edges $e_i$ in $E(\D_S)\setminus E(T)$ such that $[e_i]_T=1$ in $G_T$ for $1\leq i\leq 3$. This completes the proof.
\end{proof}

In the proof of Theorem \ref{d-dim balanced NPM with SS geq 2}, we observed that whenever $\Gamma_S \geq 2$, there exists a spanning tree $T$ such that a generating set of $G_T$ contains at most $\Gamma_S- 2$ elements. Therefore, if $\Gamma_S = 2$, then $G_T$ has no non-zero generators. Consequently, we arrive at the following conclusion:

\begin{Corollary}\label{d-dim balanced npm SS=2}
Let $\D$ be a balanced normal $d$-pseudomanifold, and let $S$ be a two-element subset of $[d]$ such that $\Gamma_{S} = 2$. Then $m(\D)=0$. 
\end{Corollary}
 
 
 \begin{Theorem}\label{3 dim and balanced genus 3}
Let $M$ be a $3$-manifold. Then $M$ is homeomorphic to $\mathbb{S}^3$ if and only if $\mathcal{G}_{M}\leq 3$.
\end{Theorem}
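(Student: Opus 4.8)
The plan is to prove both implications using the explicit formula \eqref{3dim rho} for the balanced $\e$-genus of a balanced $3$-manifold, namely $\rho_\e(\D) = 1 + f_{\{\e_0,\e_2\}} - f_{\{\e_0\}} - f_{\{\e_2\}} = 1 + \Gamma_{\{\e_0,\e_2\}}$, where $\Gamma_S = f_{\{p,q\}} - f_{\{p\}} - f_{\{q\}}$ for $S = \{p,q\}$. Under this identification, minimizing $\rho_\e(\D)$ over cyclic permutations $\e = (\e_0,\e_1,\e_2,\e_3)$ of $[3]$ amounts to minimizing $1 + \Gamma_S$ over two-element subsets $S \subseteq [3]$ (since the pair $\{\e_0,\e_2\}$ of ``opposite'' colors ranges over all three two-element subsets of $[3]$ as $\e$ varies, and Corollary \ref{equality of h values} guarantees the value is symmetric in $S$ versus its complement). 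So $\mathcal G(\D) = 1 + \min_{|S|=2}\Gamma_S$, and $\mathcal G_M = 1 + \min\{\Gamma_S(\D) : \D \text{ a balanced triangulation of } M,\ |S|=2\}$.

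For the forward direction, suppose $M \cong \mathbb S^3$. Then $\mathbb S^3$ admits the octahedral $3$-sphere triangulation $\D = \p(a_0b_0)\star\p(a_1b_1)\star\p(a_2b_2)$, which by the computation in Lemma \ref{minimum balanced genus of manifold} (the case $d=3$) has $\rho_\e(\D) = 1 + (3-3)2^{1} = 1$ for every $\e$; hence $\mathcal G_{\mathbb S^3} = 1 \leq 3$. (Alternatively: for this $\D$, $\Gamma_S = 0$ for every two-element $S$.) For the reverse direction, suppose $M$ is a $3$-manifold with $\mathcal G_M \leq 3$. Pick a balanced triangulation $\D$ of $M$ and a two-element subset $S \subseteq [3]$ realizing $\mathcal G(\D) = 1 + \Gamma_S \leq 3$, so $\Gamma_S \in \{0,1,2\}$ (note $\Gamma_S \geq 0$ always since every vertex of $\D_S$ has degree at least $2$, giving $f_{\{p,q\}} = |E(\D_S)| \geq |V(\D_S)| = f_{\{p\}} + f_{\{q\}}$). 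If $\Gamma_S = 0$, then Lemma \ref{d-dim manifold with SS=1}'s predecessor — Lemma \ref{d-dim manifold with SS=0} — gives $\|\D\| \cong \mathbb S^3$. If $\Gamma_S = 1$, then Lemma \ref{d-dim manifold with SS=1} gives $\|\D\| \cong \mathbb S^3$. If $\Gamma_S = 2$, then Corollary \ref{d-dim balanced npm SS=2} gives $m(\D) = 0$, i.e. $M$ is a simply connected closed $3$-manifold, so by the Poincaré conjecture (Perelman) $M \cong \mathbb S^3$. In all three cases $M \cong \mathbb S^3$, completing the proof.

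The main subtlety to handle carefully is the case $\Gamma_S = 2$: unlike the cases $\Gamma_S \in \{0,1\}$, where the geometric structure theorems (Lemmas \ref{d-dim manifold with SS=0} and \ref{d-dim manifold with SS=1}) directly identify $\|\D\|$ as a sphere, here we only obtain triviality of the fundamental group from Corollary \ref{d-dim balanced npm SS=2} and must invoke the resolution of the Poincaré conjecture to conclude. I should state this dependence explicitly. A second, more routine point is to verify cleanly that as $\e$ runs over all cyclic permutations of $[3]$, the unordered pair $\{\e_0,\e_2\}$ attains all three two-element subsets of $[3]$, so that $\mathcal G(\D) = 1 + \min_{|S|=2}\Gamma_S$ with the minimum genuinely over all such $S$; combined with the symmetry $\Gamma_S = \Gamma_{[3]\setminus S}$ (from Corollary \ref{equality of h values}, since $h_S = h_{[3]\setminus S}$ forces $\Gamma_S - \Gamma_{[3]\setminus S} = 0$ after the same cancellation used to derive \eqref{3dim rho}), this is immediate.
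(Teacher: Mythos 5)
Your proposal is correct and follows essentially the same route as the paper: reduce via Equation \eqref{3dim rho} to the existence of a two-element set $S$ with $\Gamma_S\leq 2$, then apply Lemmas \ref{d-dim manifold with SS=0} and \ref{d-dim manifold with SS=1} for $\Gamma_S\leq 1$ and Corollary \ref{d-dim balanced npm SS=2} for $\Gamma_S=2$, with the octahedral sphere handling the converse. The only difference is that you explicitly flag the appeal to the Poincar\'e conjecture in the $\Gamma_S=2$ case (where one only gets $m(\D)=0$), a dependence the paper's proof leaves implicit; this is a worthwhile clarification but not a change of method.
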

\begin{proof}
Let $\mathcal{G}_{M}\leq 3$, and let $\D$ be a balanced triangulation of $M$ satisfying $\mathcal{G}(\D)\leq 3$. From Equation \eqref{3dim rho}, for every cyclic permutation $\e=(\e_0,\e_1,\e_2,\e_3)$ of $[3]$, the balanced $\e$-genus is given by $\rho_{\varepsilon}(\D)= 1+ f_{\{\e_0,\e_2\}}-f_{\{\e_0\}}-f_{\{\e_2\}}= \Gamma_{S}+1$, where $S=\{\e_0,\e_2\}$. Since $\mathcal{G}(\D)\leq 3$, there exists a cyclic permutation $\e$ of $[3]$ for which $\rho_{\varepsilon}(\D)\leq 3$ holds, i.e., there is a 2-element subset $S$ of $[3]$ such that $\Gamma_{S} \leq 2$. If  $\Gamma_{S} \leq 1$, then Lemmas \ref{d-dim manifold with SS=0} and \ref{d-dim manifold with SS=1} imply that $\D$ is a triangulation of a 3-sphere. On the other hand, if $\Gamma_{S} = 2$, then from Corollary \ref{d-dim balanced npm SS=2} it follows that $m(\D)=0$. Therefore, $\D$ is a triangulation of $\mathbb{S}^3$.

For the converse part, recall that in Lemma \ref{minimum balanced genus of manifold} we established that for $d\geq 3$, $\mathcal{G}_{\mathbb{S}^d}=1+(d-3)2^{d-2}$. In particular, for $d=3$, we have $\mathcal{G}_{\mathbb{S}^3}=1<3$. This completes the proof.
%
\end{proof}

\vspace{.2cm}

\noindent \textit{Proof of Theorem \ref{main1}:}  Let $\D$ be a balanced triangulation of a $3$-manifold $M$. Since $\|\D\|\ncong \mathbb{S}^3$, it follows from Theorem \ref{3 dim and balanced genus 3} that  $\mathcal{G}(\D)\geq 4$. Therefore, for every cyclic permutation $\e$ of $[3]$, we have $\rho_{\varepsilon}(\D)\geq 4$, implying that $\Gamma_S\geq 3$ for every two-element subset $S$ of $[3]$. Hence, by Theorem \ref{d-dim balanced NPM with SS geq 2}, we obtain $\Gamma_S\geq m+2$ for every two-element subset $S$ of $[3]$. Thus, from Equation \eqref{3dim rho}, it follows that $\rho_{\e}(\D)\geq m+3$ for every cyclic permutation $\e$ of $[3]$. Consequently, $\mathcal{G}(\D)\geq m+3$ for every balanced triangulation $\D$ of $M$, which leads to the conclusion that  $\mathcal{G}_M\geq m+3$.\hfill$\Box$




\vspace{.25cm}

\begin{Theorem}\label{4 dim and balanced genus 2X+10}
Let $M$ be a PL $4$-manifold. Then $M$ is homeomorphic to $\mathbb{S}^4$ if and only if $\mathcal{G}_{M}\leq  2 \chi(M)+10$.
\end{Theorem}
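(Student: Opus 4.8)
The plan is to mimic the proof of Theorem~\ref{3 dim and balanced genus 3}, replacing the single quantity $\Gamma_S$ that appears there by a sum of five such quantities, after which a pigeonhole estimate finishes the job. First I would recast the balanced $\e$-genus formula \eqref{4dim rho2} in terms of the $\Gamma_S$. Fix a cyclic permutation $\e=(\e_0,\e_1,\e_2,\e_3,\e_4)$ of $[4]$ and picture the five colors as the vertices of a pentagon with edges $\{\e_i,\e_{i+1}\}$, indices in $\mathbb{Z}_5$; then the ten two-element subsets of $[4]$ are exactly the five pentagon edges together with the five (pairwise distinct) diagonals $\{\e_i,\e_{i+2}\}$. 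Hence $f_1-\sum_{i\in\mathbb{Z}_5}f_{\{\e_i\e_{i+1}\}}=\sum_{j\in\mathbb{Z}_5}f_{\{\e_j\e_{j+2}\}}$, and since every color lies on exactly two diagonals, $\sum_{j\in\mathbb{Z}_5}(f_{\{\e_j\}}+f_{\{\e_{j+2}\}})=2f_0$. Substituting both identities into \eqref{4dim rho2} yields
$$\rho_{\e}(\D)=1+2\chi(\D)+\sum_{j\in\mathbb{Z}_5}\Gamma_{\{\e_j,\e_{j+2}\}},$$
so that $\mathcal G(\D)\leq 2\chi(\D)+10$ holds exactly when $\sum_{j\in\mathbb{Z}_5}\Gamma_{\{\e_j,\e_{j+2}\}}\leq 9$ for some cyclic permutation $\e$.

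For the ``if'' direction I would argue as follows: suppose $\mathcal G_M\leq 2\chi(M)+10$ and take a balanced triangulation $\D$ of $M$ with $\mathcal G(\D)\leq 2\chi(M)+10=2\chi(\D)+10$; choose a cyclic permutation $\e$ with $\sum_{j\in\mathbb{Z}_5}\Gamma_{\{\e_j,\e_{j+2}\}}\leq 9$. Each $\Gamma_S$ is a nonnegative integer, since the rank-selected subcomplex $\D_S$ is a graph of minimum degree at least $2$ and hence has at least as many edges as vertices, i.e.\ $f_S\geq f_{\{p\}}+f_{\{q\}}$. A sum of five nonnegative integers that is at most $9<5\cdot 2$ must contain a term that is at most $1$, so there is a two-element subset $S\subseteq[4]$ with $\Gamma_S\leq 1$. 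By Lemma~\ref{d-dim manifold with SS=0} (if $\Gamma_S=0$) or Lemma~\ref{d-dim manifold with SS=1} (if $\Gamma_S=1$) we obtain $\|\D\|\cong\mathbb{S}^4$, so $M$ is PL homeomorphic, hence homeomorphic, to $\mathbb{S}^4$. For the ``only if'' direction, Lemma~\ref{minimum balanced genus of manifold} with $d=4$ gives $\mathcal G_{\mathbb{S}^4}=1+(4-3)2^{4-2}=5$, while $2\chi(\mathbb{S}^4)+10=14$; hence $\mathcal G_M=5\leq 2\chi(M)+10$ whenever $M$ is (PL) homeomorphic to $\mathbb{S}^4$.

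The delicate point, and the only real obstacle, is that the threshold $2\chi+10$ is tuned precisely to the pigeonhole: it converts, via the displayed identity, into $\sum_{j\in\mathbb{Z}_5}\Gamma_{\{\e_j,\e_{j+2}\}}\leq 9$, which is just short of $10=5\cdot 2$ and therefore forces some diagonal color pair with $\Gamma_S\leq 1$, exactly the regime handled by Lemmas~\ref{d-dim manifold with SS=0} and \ref{d-dim manifold with SS=1}. Had the bound allowed the sum to equal $10$, one could only extract a pair with $\Gamma_S=2$, and Corollary~\ref{d-dim balanced npm SS=2} would then give merely $m(\D)=0$, which in dimension four does not force $M\cong\mathbb{S}^4$ (for instance $\mathbb{CP}^2$ is simply connected but not a sphere). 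Thus the argument rests entirely on never having to treat the case $\Gamma_S=2$; the remaining verifications, namely that the five diagonals really are five distinct summands, that $\chi(\D)=\chi(M)$, and that $\Gamma_S\geq 0$, are routine.
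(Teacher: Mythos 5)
Your proposal is correct and follows essentially the same route as the paper: rewrite $\rho_{\e}(\D)$ from \eqref{4dim rho2} as $1+2\chi(\D)$ plus the sum of the five diagonal quantities $\Gamma_{\{\e_j,\e_{j+2}\}}$, pigeonhole the bound $\sum_j\Gamma_{\{\e_j,\e_{j+2}\}}\leq 9$ to extract some $S$ with $\Gamma_S\leq 1$, and invoke Lemmas \ref{d-dim manifold with SS=0} and \ref{d-dim manifold with SS=1}, with the converse handled by Lemma \ref{minimum balanced genus of manifold}. Your explicit verification that each $\Gamma_S\geq 0$ (via the minimum-degree-two property of $\D_S$) is a detail the paper leaves implicit but needs for the pigeonhole, so it is a welcome addition rather than a divergence.
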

\begin{proof}
Let $\D$ be a balanced triangulation of $M$ such that $ \mathcal{G}(\D)\leq 2 \chi(\D)+10$. Then, there exists a cyclic permutation $\e$ of $[4]$ such that $\rho_{\e}(\D)\leq 2 \chi(\D)+10$. Moreover, from Equation \eqref{4dim rho2} it follows that $f_1(\D)- \sum_{i\in\mathbb{Z}_{5}}f_{\{\e_i,\e_{i+1}\}}-2f_0(\D)\leq 9$. 

The term $f_1(\D)- \sum_{i\in\mathbb{Z}_{5}}f_{\{\e_i,\e_{i+1}\}}-2f_0(\D)$ consists of five expressions of the form  $f_{\{\e_i,\e_{i+2}\}}-f_{\{\e_i\}}-f_{\{\e_{i+2}\}}$, where $i\in\mathbb{Z}_5$. Therefore, there exists at least one pair $\{i,j\}$ such that $f_{\{\e_i,\e_j\}}-f_{\{\e_i\}}-f_{\{\e_j\}}\leq 1$. Consequently, there exists a two-element subset $S$ of $[4]$ such that $\Gamma_S\leq 1$. Therefore, by Lemmas \ref{d-dim manifold with SS=0} and \ref{d-dim manifold with SS=1}, we conclude that $\|\D\|\cong\mathbb{S}^4$.

For the converse part, recall that in Lemma \ref{minimum balanced genus of manifold} we established that for $d\geq 3$, $\mathcal{G}_{\mathbb{S}^d}=1+(d-3)2^{d-2}$. In particular, for $d=4$, we have $\mathcal{G}_{\mathbb{S}^4}=5<2\chi(\mathbb{S}^4)+10=14$. This completes the proof.
\end{proof}


\noindent \textit{Proof of Theorem \ref{main2}:} Let $\D$ be a balanced triangulation of a $4$-manifold such that $\|\D\|\ncong \mathbb{S}^4$. From Lemmas \ref{d-dim manifold with SS=0} and \ref{d-dim manifold with SS=1} it follows that $\Gamma_S\geq 2$ for every two-element subset of $[4]$. Therefore, by Theorem \ref{d-dim balanced NPM with SS geq 2}, we have $\Gamma_S\geq m+2$ for every two-element subset of $[4]$.

Note that in the expression of $\rho_{\e}(\D)$ for balanced $4$-manifolds in Equation \eqref{4dim rho2}, the term $f_1(\D)- \sum_{i\in\mathbb{Z}_{5}}f_{\{\e_i,\e_{i+1}\}}-2f_0(\D)$ is the sum of the expressions of the form $f_{\{\e_i,\e_{i+2}\}}-f_{\{\e_i\}}-f_{\{\e_{i+2}\}}$, where $i\in\mathbb{Z}_5$. Therefore, $\Gamma_S\geq m+2$ for every two-element subset of $[4]$ implies that $f_1(\D)- \sum_{i\in\mathbb{Z}_{5}}f_{\{\e_i,\e_{i+1}\}}-2f_0(\D)\geq 5(m+2)= 5m+10$. Hence,  for every cyclic permutation $\e$ of $[4]$, we obtain $\rho_{\e}(\D)\geq 2 \chi(\D)+ 5m +11$. Thus, for every balanced triangulation $\D$ of $M$, we have $ \mathcal{G}(\D)\geq 2 \chi(\D)+ 5m +11$. This completes the proof. \hfill$\Box$


\vspace{.25cm}


\begin{Remark}\label{Sharpness}
{\rm In \cite{KleeNovik2016, L.Venturello}, minimal balanced triangulations of the $(d-1)$-dimensional sphere bundle over $\mathbb{S}^1$ are provided, and the structure can be considered the balanced analog of K\"{u}hnel's constructions described in \cite{Kuhnel1986}. In particular, the $f$-vector of the minimal triangulations of the trivial bundle $\mathbb{S}^2\times \mathbb{S}^1$ and the twisted bundle $\mathbb{S}^2\rtimes \mathbb{S}^1$ are given by $(1,14,64,100,50)$ and $(1,12,54,84,42)$, respectively. From these triangulations, the calculated values of the balanced genus of the 3-dimensional sphere bundle over a circle become 4. On the other hand, the $f$-vector of the minimal triangulations of $\mathbb{S}^3\times \mathbb{S}^1$ is  given by $(1,15,90,210,225,90)$ and the calculated values of the balanced genus of $\mathbb{S}^3\times \mathbb{S}^1$ is 16. 

Note that $m(\mathbb{S}^{d-1}\times \mathbb{S}^1)=1$ and $m(\mathbb{S}^{d-1}\rtimes \mathbb{S}^1)=1$. Therefore, from Theorem \ref{main1} it follows that both $\mathcal{G}(\mathbb{S}^2\rtimes \mathbb{S}^1)$ and $\mathcal{G}(\mathbb{S}^2\rtimes \mathbb{S}^1)$ are at least 4. Similarly, from Theorem \ref{main2}, we have $\mathcal{G}(\mathbb{S}^3\times \mathbb{S}^1)$ is at least 16. Hence, our results result in Theorem \ref{main1} is sharp for $\mathbb{S}^{2}\times \mathbb{S}^1$ and $\mathbb{S}^{2}\rtimes \mathbb{S}^1$, where $\mathcal{G}_{\mathbb{S}^{2}\times \mathbb{S}^1}= \mathcal{G}_{\mathbb{S}^{2}\rtimes \mathbb{S}^1}=4$, and the bound in Theorem \ref{main2} is sharp for $\mathbb{S}^{3}\times \mathbb{S}^1$, where $\mathcal{G}_{\mathbb{S}^{3}\times \mathbb{S}^1}=16$.
%
%

}
\end{Remark}


\section{Future Directions}
 
In this article, we introduced a new concept called the balanced genus and established a lower bound for the balanced genus of PL 3- and 4-manifolds. We believe that this notion will lead to an active area of research in combinatorial topology. There are several intriguing problems related to balanced genus that warrant further investigation. Below, we outline some research directions in this area:  

\medskip

\begin{Question}   
In dimension $4$, we have provided a lower bound for the balanced genus of PL $4$-manifolds. An interesting research direction is to classify all PL $4$-manifolds based on their balanced genus.  
\end{Question}

For a cyclic permutation $\e=(\e_0,\e_1,\e_2,\e_3,\e_4)$ of $[4]$, consider the class of balanced 4-manifolds $\mathcal{D}=\{\D:  f_{\{\e_i,\e_{i+2}\}}-f_{\{\e_i\}}-f_{\{\e_{i+2}\}} = m(\D)+2, \hspace{.1cm}\text{where} \hspace{.2cm} i\in\mathbb{Z}_5\}$. If a PL $4$-manifold $M$ admits a balanced triangulation $\Delta \in \mathcal{D}$, then the balanced genus of $M$ attains our lower bound. However, the study of balanced triangulations of PL $4$-manifolds remains quite limited. Consequently, it is not yet clear which PL $4$-manifolds admit a triangulation belonging to $\mathcal{D}$. This naturally leads to the following question.

\begin{Question}  
Determine the class of PL $4$-manifolds that admit a balanced triangulation lying in $\mathcal{D}$, i.e., those for which $\mathcal{G}_M = 2\chi(M) + 5m + 11 $.
\end{Question}

\begin{Question} 
We have shown that for $ d \geq 3 $, the balanced genus of $ \mathbb{S}^d $ is given by  
   \( 1 + (d-3)  2^{d-2}. \)  
   A natural extension of this result is to determine the balanced genus of other well-known PL $ d $-manifolds.  
\end{Question} 

According to Remark \ref{Sharpness}, the calculated values of the balanced genus of two non-sphere manifolds $\mathbb{S}^2\times \mathbb{S}^1$ and $\mathbb{S}^3\times \mathbb{S}^1$ are 4 and 16, respectively. For higher dimensions, we conjecture the following:

\begin{Conjecture}
    If $d\geq 3$, then $\mathcal{G}_{\mathbb{S}^{d-1}\times \mathbb{S}^1} =  4+3(d-3) 2^{d-2}$.
\end{Conjecture}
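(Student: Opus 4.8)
\medskip
\noindent\emph{Proof proposal.} The plan is to establish the two inequalities
$\mathcal{G}_{\mathbb{S}^{d-1}\times\mathbb{S}^1}\geq 4+3(d-3)2^{d-2}$ and
$\mathcal{G}_{\mathbb{S}^{d-1}\times\mathbb{S}^1}\leq 4+3(d-3)2^{d-2}$ separately. For $d=3$ and $d=4$ the equality already follows from Theorems~\ref{main1} and~\ref{main2} (they give $\geq 4$ and $\geq 16$, since $m(\mathbb{S}^{d-1}\times\mathbb{S}^1)=1$ and $\chi(\mathbb{S}^{3}\times\mathbb{S}^1)=0$) together with the triangulations recorded in Remark~\ref{Sharpness}; so the real content lies in the lower bound for $d\geq 5$ and in a construction valid in all dimensions.

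For the lower bound, fix a balanced triangulation $\D$ of $\mathbb{S}^{d-1}\times\mathbb{S}^1$ and a cyclic permutation $\varepsilon$ of $[d]$. Writing $\frac{d-1}{4}=\frac{d-3}{8}+\frac{d+1}{8}$ in \eqref{rho2}, and using that each facet contains exactly one $(d-2)$-simplex $\tau$ with $\kappa(V(\tau))=[d]\setminus\{\varepsilon_i,\varepsilon_{i+1}\}$ while every such $\tau$ has $\deg(\tau,\D)\geq 4$ (its link being a bi-colored cycle), one rewrites
\[
\rho_{\varepsilon}(\D)=1+\frac{d-3}{8}\,f_d+\frac{1}{8}\sum_{i\in\mathbb{Z}_{d+1}}\ \sum_{\kappa(V(\tau))=[d]\setminus\{\varepsilon_i,\varepsilon_{i+1}\}}\bigl(\deg(\tau,\D)-4\bigr).
\]
Thus $\rho_\varepsilon(\D)$ exceeds the spherical value $1+(d-3)2^{d-2}$ (attained by the octahedral $d$-sphere) by $\tfrac{d-3}{8}(f_d-2^{d+1})$ plus a non-negative ``excess-degree'' term, and the goal is to prove that for $\mathbb{S}^{d-1}\times\mathbb{S}^1$ these two surpluses add up to at least $3+(d-3)2^{d-1}$. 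Since $\mathbb{S}^{d-1}\times\mathbb{S}^1$ is not a sphere, Lemmas~\ref{d-dim manifold with SS=0} and~\ref{d-dim manifold with SS=1} yield $\Gamma_S\geq 2$ for every two-element $S\subseteq[d]$, whence Theorem~\ref{d-dim balanced NPM with SS geq 2} (with $m=1$) upgrades this to $\Gamma_S\geq 3$; equivalently, each rank-$2$ selected graph $\D_S$ has cycle rank at least $4$. The remaining task is to turn these four independent cycles in each $\D_S$ into quantitative lower bounds on $f_d$ and on the excess-degree sum. For $d=3,4$ this is precisely what the Dehn--Sommerville relations do, expressing $\rho_\varepsilon(\D)$ as a combination of $\Gamma$-type quantities up to a $2\chi(\D)$ correction (cf.\ \eqref{3dim rho}, \eqref{4dim rho2}), after which $\Gamma_S\geq 3$ finishes the argument. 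In general I would derive the corresponding flag Dehn--Sommerville identity from Lemma~\ref{Swartz} to isolate the analogous $\Gamma$-combinations with their $\sim 2^{d-2}$-sized cofactors, reinforced by an induction on $d$ through the links of $(d-3)$-simplices (which are balanced $2$-spheres) to control the excess degrees. This conversion is the principal obstacle: the higher flag Dehn--Sommerville relations entangle many $h_T$'s simultaneously, and a lower-bound-theorem-type argument for balanced manifolds appears to be needed to bound all excess degrees at once.

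For the upper bound, the plan is to produce a balanced triangulation attaining the value. The natural candidate is the balanced analogue of K\"{u}hnel's handle construction due to Klee--Novik and Venturello (\cite{KleeNovik2016, L.Venturello}; see Remark~\ref{Sharpness}): starting from the octahedral $d$-sphere $\D_0=\partial(a_0b_0)\star\cdots\star\partial(a_db_d)$, delete the two antipodal facets $\{a_0,\dots,a_d\}$ and $\{b_0,\dots,b_d\}$ and identify their boundaries by the color-preserving, hence admissible, bijection $a_i\mapsto b_i$; the resulting complex $\D$ should be a balanced triangulation of $\mathbb{S}^{d-1}\times\mathbb{S}^1$. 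One then computes $f_d(\D)$ and the flag numbers $f_{d-2}^{ij}(\D)$ --- each deviating from the octahedral values $f_d=2^{d+1}$, $f_{d-2}^{ij}=2^{d-1}$ by a controlled amount, in the spirit of the bookkeeping of the Remark following \eqref{rho2} --- substitutes into \eqref{rho2} for a cyclic permutation $\varepsilon$ minimizing $\sum_{i\in\mathbb{Z}_{d+1}} f_{d-2}^{\varepsilon_i\varepsilon_{i+1}}$, and checks that the outcome is exactly $4+3(d-3)2^{d-2}$; the $f$-vectors $(1,14,64,100,50)$ and $(1,15,90,210,225,90)$ from Remark~\ref{Sharpness} are the base cases $d=3,4$. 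The delicate points are to verify that the glued complex is genuinely a PL manifold (all vertex links remaining $(d-1)$-spheres), that the antipodal facets are far enough apart for the identification to be legitimate and to produce the trivial rather than the twisted bundle, and that the $f$-vector count goes through uniformly in $d$.
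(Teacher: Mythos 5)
This statement is posed in the paper only as a conjecture; the paper contains no proof of it, so there is no argument of the authors' to compare yours against. What matters, then, is whether your proposal actually closes the question, and it does not: it is an outline with the two decisive steps left open, and you say as much yourself.

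Concretely, the lower bound for $d\geq 5$ is the genuine gap. Everything you can actually extract from the paper's machinery is $\Gamma_S\geq m(\D)+2=3$ for every two-element $S$ (via Lemmas~\ref{d-dim manifold with SS=0}, \ref{d-dim manifold with SS=1} and Theorem~\ref{d-dim balanced NPM with SS geq 2}), which is an \emph{additive} constraint on flag numbers of edges. The conjectured value $4+3(d-3)2^{d-2}$ requires, through your rewriting of \eqref{rho2}, something like $f_d\geq 3\cdot 2^{d}$ together with control of the excess degrees of all $(d-2)$-faces --- a multiplicative, lower-bound-theorem-type statement for balanced triangulations of $\mathbb{S}^{d-1}\times\mathbb{S}^1$. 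No result in the paper supplies this, the flag Dehn--Sommerville relations of Lemma~\ref{Swartz} only give you the $d=3,4$ reductions \eqref{3dim rho} and \eqref{4dim rho2}, and your proposed induction through links of $(d-3)$-simplices is not carried out (nor is it clear it can be, since links are spheres and lose all information about the $\mathbb{S}^1$ factor). The upper bound half is also unfinished: the antipodal-facet identification on the octahedral sphere as you describe it is not admissible in the paper's sense without further care (the facets $\{a_0,\dots,a_d\}$ and $\{b_0,\dots,b_d\}$ share no codimension-one separation issues, but the identified complex must be checked to be a simplicial complex at all, which is why \cite{KleeNovik2016, L.Venturello} use larger triangulations), and the flag-number computation that would certify the value $4+3(d-3)2^{d-2}$ for general $d$ is not done. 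As it stands the proposal establishes the conjecture only for $d=3,4$, where the paper's Remark~\ref{Sharpness} already records it.
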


\begin{Question} 
One may investigate general bounds for the balanced genus of PL $ d $-manifolds for $ d > 4 $ and explore classification problems in higher dimensions.  
\end{Question}

\begin{Question}  
   Since the balanced genus can be defined not only for PL $ d $-manifolds but also for normal $ d $-pseudomanifolds, it would be worthwhile to extend the above problems to the setting of normal pseudomanifolds.  
\end{Question} 

These problems open new avenues for research in combinatorial topology and may lead to further structural insights into PL and normal pseudomanifolds.

\bigskip

\noindent {\bf Acknowledgement:} The authors would like to thank the anonymous referees for their valuable comments and suggestions, which have greatly improved the manuscript.

 \medskip
 \noindent {\bf Author contributions:} The authors have contributed equally.
 \medskip
 
\noindent {\bf Funding:} The first author is supported by the Mathematical Research Impact Centric Support (MATRICS) Research Grant (MTR/2022/000036) by ANRF (India). The second author is supported by the Prime Minister's Research Fellowship (PMRF/1401215), India.

 \medskip

\noindent {\bf Data Availability:} No data was used for the research described in the article.

\medskip
 
\noindent {\bf Conflict of interest:} There is no competing interest.

\medskip

\noindent {\bf Ethical approval:} Not applicable.

\end{document}